\newtheorem{theorem}{Theorem} 
\newtheorem{proposition}{Proposition} 
\newtheorem{corollary}{Corollary} 
\newtheorem{remark}{Remark} 
\newtheorem{lemma}{Lemma} 
\newtheorem{definition}{Definition}
\newtheorem{main}{Theorem} 
\begin{document}
	
\title[Generalized Pseudo-Hopf Bifurcation]{Generalized Pseudo-Hopf Bifurcation: \\ Limit Cycle Position and Period}

\author[Lucas Queiroz Arakaki, Douglas D. Novaes, and Paulo Santana]{Lucas Queiroz Arakaki$^1$, Douglas D. Novaes$^1$ and Paulo Santana$^2$}

\address{$^1$Universidade Estadual de Campinas (UNICAMP), Departamento de Matem\'{a}tica, Instituto de Matemática, Estatística e Computação Científica (IMECC) - Rua S\'{e}rgio Buarque de Holanda, 651, Cidade Universit\'{a}ria Zeferino Vaz, 13083--859, Campinas, SP, Brasil}
\email{ddnovaes@unicamp.br; larakaki@unicamp.br}
\address{$^2$Universidade Estadual Paulista, IBILCE-UNESP - Av. Cristov\~ao Colombo, 2265, 15.054-000, S. J. Rio Preto, SP, Brasil}
\email{paulo.santana@unesp.br}

\subjclass[2020]{34C23, 34A36, 37G15}

\keywords{Filippov systems, pseudo-Hopf bifurcation, limit cycles, period function}

\begin{abstract}
We investigate planar piecewise-smooth vector fields with a discontinuity line, focusing on the bifurcation of crossing limit cycles that arise when one of the vector fields is translated along the discontinuity set. We establish topological conditions under which such bifurcations occur and, under additional generic hypotheses, derive precise asymptotic expressions for both the position and the period of the resulting limit cycle in terms of the perturbation parameter. Our results extend the classical pseudo-Hopf bifurcation: they are not restricted to invisible folds or elementary monodromic singularities, but also apply to nilpotent centers/foci, half-monodromic singularities such as cusps, periodic orbits, and hyperbolic polycycles, thereby encompassing both local and non-local configurations. We show that the period function exhibits distinct asymptotic behaviors depending on the interacting objects. In particular, we provide a comprehensive table summarizing the leading terms of the period and position of the limit cycle for each possible configuration.
\end{abstract}

\maketitle

\section{Introduction}

The study of piecewise smooth vector fields has continued to gain significant interest in the last decades, which can be evidenced by the remarkable increase in the number of publications on the subject. This interest is justified by the myriad of natural phenomena that can be modeled by such vector fields. A non-exhaustive list of such applications can be found in the celebrated books of Andronov et al~\cite[Chapter~$8$]{Andronov1} and diBernado et al~\cite{diBernardo}, and the survey of Belykh et al~\cite{SimpsonSurvey}.

As in the smooth setting, the limit cycles are of particular importance since its behavior carries essential information on the dynamics of the considered vector field. Despite the piecewise linear models studied at~\cite[Chapter~8]{Andronov1}, as far as we know, one of the first qualitative studies of limit cycles in piecewise smooth vector fields is due to Skryabin~\cite{Skr} and his \emph{merged focus}, where he calculated the first two Lyapunov constants of such focus and proved a theorem that resembles the one of Hopf.  Nowadays, this merged focus is referred to as a \emph{fold-fold singularity}, or, equivalently, a \emph{monodromic tangential singularity}, to encompass more general cases. Such singularities have been extensively studied, for instance by Coll et al. \cite{CollGasPro}, and more recently by Novaes and Silva \cite{NovLSilvaJDE2021,NovLSilvaSIADS2025}, as well as by Esteban et al. \cite{EFPT23}. In particular, \cite{NovLSilvaJDE2021} introduced a recursive formula for computing all Lyapunov constants of any monodromic tangential singularity, including degenerate cases.

A hallmark work in the field is due to Filippov~\cite{FilippovPaper} and his notion of \emph{sliding vector field}. For a definition of sliding region and vector field, we refer to Guardia et al~\cite{Guardia}. With this notion Filippov~\cite[p.~241, item b]{FilippovBook} studied another bifurcation that resembles the one of Hopf, nowadays known as \emph{pseudo-Hopf bifurcation}. In a few words, an invisible fold-fold singularity undergoes a pseudo-Hopf bifurcation when we \emph{slip} it in such a way that they create a sliding segment between them. Such a segment acts like a source/sink for the vector field, depending on the direction of the sliding. When this segment has the opposite stability of the fold-fold singularity, a limit cycle is created (see Figure~\ref{Fig13}).
\begin{figure}[ht]
	\begin{center}
		\begin{minipage}{4.1cm}
			\begin{center} 
				\begin{overpic}[width=3.7cm]{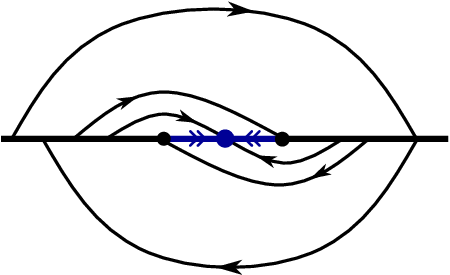} 
					\put(29,20.5){$b^+$}
					\put(63,32){$b^-$}
				\end{overpic}
				
				$b^+-b^-<0$.
			\end{center}
		\end{minipage}
		\begin{minipage}{4.1cm}
			\begin{center} 
				\begin{overpic}[width=3.7cm]{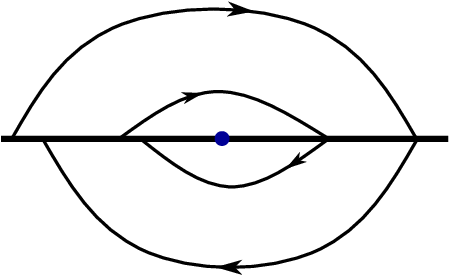} 
					\put(50,21.5){$b^+$}
					\put(45,32){$b^-$}
				\end{overpic}
				
				$b^+-b^-=0$.
			\end{center}
		\end{minipage}
		\begin{minipage}{4.1cm}
			\begin{center} 
				\begin{overpic}[width=3.7cm]{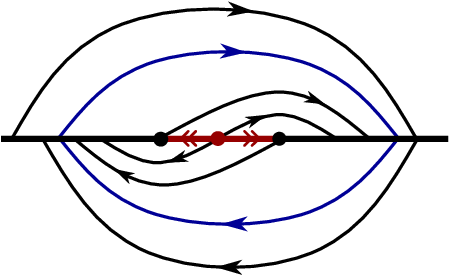} 
					\put(62,21.5){$b^+$}
					\put(32,32){$b^-$}
				\end{overpic}
				
				$b^+-b^->0$.
			\end{center}
		\end{minipage}
	\end{center}
	\caption{Illustration of a classical pseudo-Hopf bifurcation. Here $b^+$ (resp. $b^-$) represent the invisible fold of the vector field acting above (resp. below) the switching set. Double arrows represent the flow of sliding vector field. Objects colored in blue (resp. red) are stable (resp. unstable). Colors available on the online version.}\label{Fig13}
\end{figure}
In the case of a classical pseudo-Hopf bifurcation, it is well-known that the sliding segment contain a unique \emph{pseudo-node} in its interior~\cite[Theorem~8.3]{SimpsonList2}. The change of its stability provides the motivation for the term \emph{pseudo}-Hopf~\cite{Kuzetal,Cas2017}. Degenerated versions of the classical pseudo-Hopf bifurcation was studied by Novaes and Silva~\cite{NovLSilvaJDE2021,NovLSilvaSIADS2025}.

While most of the results in the literature on {\it Hopf-like bifurcations} (HLB) focus on existence, uniqueness, and stability of limit cycles, there is significantly less attention to the temporal properties of the periodic orbit. In particular, an often overlooked aspect is how the period of the cycle depends on the bifurcation parameter. 

Beyond theoretical interest, the dependence of the period of the limit cycle on bifurcation parameters has relevant applications across disciplines. For instance, in neuroscience, it is useful in the estimation of the synaptic conductance in brain activity~\cite{Guillamon17}. It has also been applied to distinguish between two types of stable behaviors in a model of locomotion of~\emph{Xenopus} tadpoles~\cite{Xenopus}.

For smooth vector fields, Gasull, Villadelprat and Ma\~nosa~\cite{GasCod1} investigated the period of limit cycles unfolding from the \emph{elementary bifurcations}, namely: the Hopf bifurcation, the bifurcation from a semi-stable periodic orbit, the saddle-node loop and the saddle loop bifurcation. In this investigation they obtained the principal term in the asymptotic expansion of the period $\mathcal{T}(\mu)$ of the limit cycle in terms of the bifurcation parameter $\mu$, to wit:
\begin{itemize}
    \item[(i)] $\mathcal{T}(\mu)\sim T_0+T_1\mu$ for the Hopf bifurcation;
    \item[(ii)] $\mathcal{T}(\mu)\sim T_0+T_1\sqrt{|\mu|}$ for the bifurcation from a semi-stable periodic orbit;
    \item[(iii)] $\mathcal{T}(\mu)\sim T_0/\sqrt{|\mu|}$ for the saddle-node loop bifurcation;
    \item[(iv)] $\mathcal{T}(\mu)\sim T_0\ln|\mu|$ for the saddle loop bifurcation. 
\end{itemize}
Each principal term differs significantly from the others, which leads to the belief that one may characterize the bifurcation by its period. However, this was proven to be false in~\cite{MarQueVil2025} where the non-generic bifurcation of a limit cycle from a persistent polycycle was considered. In this scenario $\mathcal{T}(\mu)\sim T_0/|1-r(\mu)|$, where $r(\mu)$ is the \emph{graphic number} of the polycycle.

As for non-smooth setting, the dependence of the period on the bifurcation parameter was investigated by Simpson~\cite{SimpsonList1, SimpsonList2}, who studied several types of HLBs and provided a compendium detailing the leading term of the period as well as the maximum amplitude of the corresponding limit cycle.

In the present work we focus on the pseudo-Hopf bifurcation, generalizing it to the sliding of different objects other than fold-fold singularities and obtaining, for each case, the leading term of the period and the position of the limit cycle. Our results can be applied to numerous combinations of objects on each side of the switching set. Such as elementary or nilpotent center/focus, cusps, periodic orbits, and hyperbolic polycycles. In particular, the results of this paper are not restricted to local phenomena.

For example, the \emph{neck bifurcation} consists of a configuration in which a periodic orbit and a hyperbolic polycycle intersect the switching set tangentially at the same regular point, and are subsequently disconnected by translating the upper vector field with a parameter $b$ (see Figure~\ref{Fig12}).
\begin{figure}[ht]
	\begin{center}
		\begin{minipage}{4.1cm}
			\begin{center} 
				\begin{overpic}[width=3.7cm]{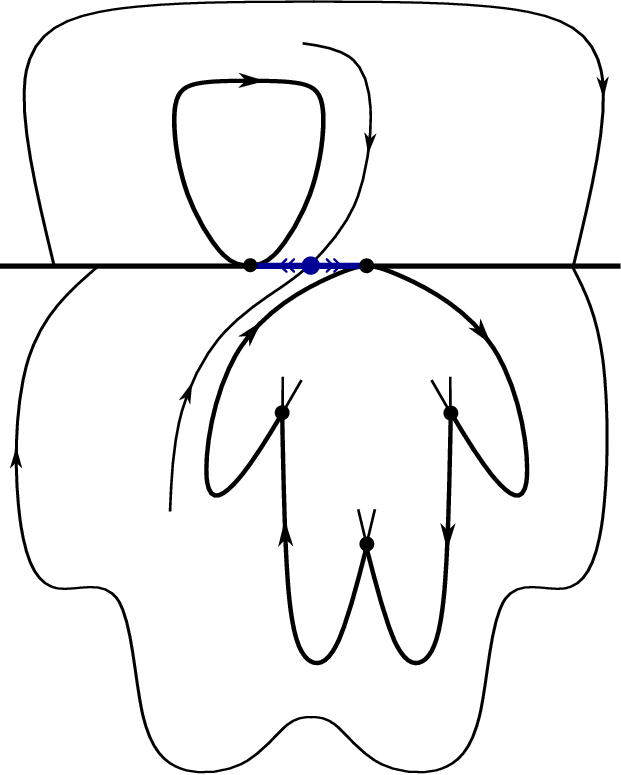} 
					\put(30,68){$b$}
					\put(45,58){$0$}
				\end{overpic}
				
				$b<0$.
			\end{center}
		\end{minipage}
		\begin{minipage}{4.1cm}
			\begin{center} 
				\begin{overpic}[width=3.7cm]{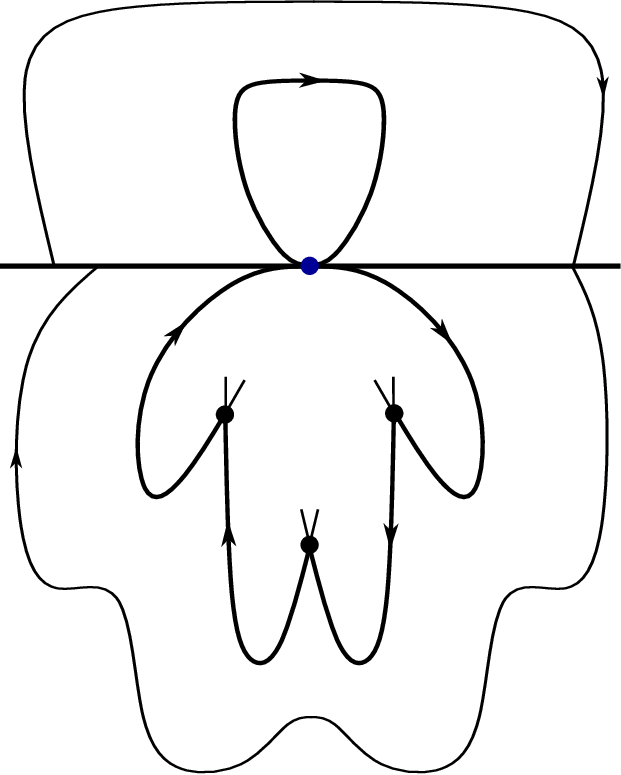} 
					\put(38,58){$0$}
				\end{overpic}
				
				$b=0$.
			\end{center}
		\end{minipage}
		\begin{minipage}{4.1cm}
			\begin{center} 
				\begin{overpic}[width=3.7cm]{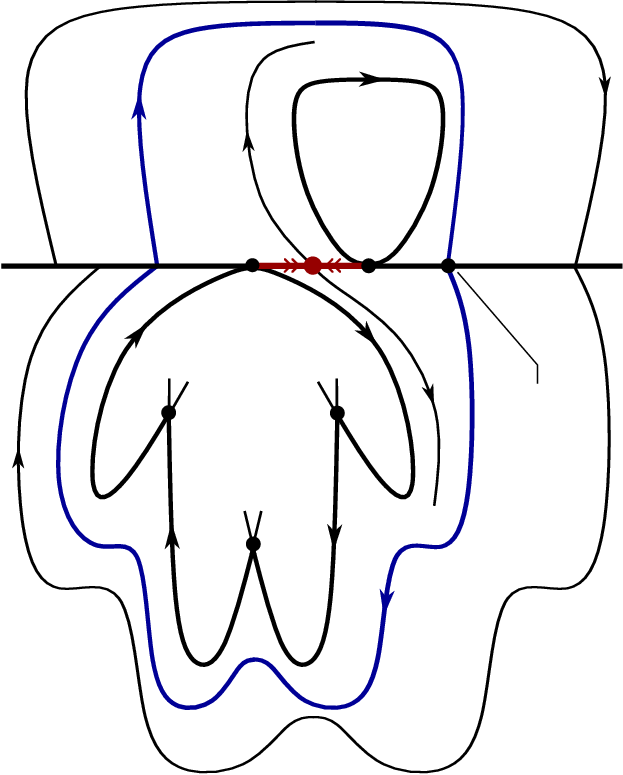} 
					\put(45,68){$b$}
					\put(30,58){$0$}
					\put(64,44){$x(b)$}
				\end{overpic}
				
				$b>0$.
			\end{center}
		\end{minipage}
	\end{center}
	\caption{Illustration of a generalized pseudo-Hopf composed by a periodic orbit and a tangential hyperbolic polycycle. Double arrows represent the sliding vector field. Objects colored in blue (resp. red) are stable (resp. unstable). Colors available on the online version.}\label{Fig12}
\end{figure}

The initial configuration, given by b=0, corresponds to a polycycle in the nonsmooth setting (see, for instance, \cite{AGN23,S23}). We show that, under suitable generic conditions, the neck bifurcation, defined as the breaking of this polycycle by a horizontal translation of the upper vector field, gives rise to exactly one hyperbolic limit cycle. Moreover, whenever it exists, its location (see $x(b)$ in Figure~\ref{Fig12}) and its period are determined by
\[
	x(b)=C|b|+o(|b|), \quad \mathcal{T}(b)=-T_0\ln |b|+\mathcal{R}(|b|),
\]
with $C>0$, $T_0>0$, and $\mathcal{R}\colon(0,\varepsilon)\to\mathbb{R}$ a bounded $C^\infty$-function, $\varepsilon>0$ small. We notice that in the classical pseudo-Hopf bifurcation, both the period and maximum amplitude of the limit cycle are of order $|b|^{1/2}$, see \cite[Theorem~8.3]{SimpsonList2}.

Notice that, in general, we cannot provide information about the \emph{maximum amplitude} of the limit cycle as it may depend on global phenomena. Instead, we provide the leading term of its intersection with the switching set (see Figure~\ref{Fig12}). Our main results are summarized in Table~\ref{Table1}.

We remark that in the classical pseudo-Hopf bifurcation (see Figure~\ref{Fig13}), the sliding segment contain exactly one pseudo-node. However, this may not always be the case. For example, in the neck bifurcation (see Figure~\ref{Fig12}), a pseudo-saddle may appear. Moreover, in a \emph{local-global} situation, such as fold/polycycle, there may not be pseudo singularities at all. In this paper we pay little attention to the sliding dynamics. Rather, we only observe that the sliding segment always act like a source/sink and changes stability as $b\approx0$ changes sign.

\begin{table}[H]
		\caption{The leading terms of the period and position of the limit cycle, where $p,q,n$ are positive integers numbers and  $0<r\leqslant1$.  Here, E-focus (resp. N-focus) stands for \emph{elementary} (resp. \emph{nilpotent}) center/focus. Similarly, P. orbit denotes periodic orbit. By Fold we mean an invisible fold, which may be degenerated.}\label{Table1}		
	\begin{tabular}{l l l}
		\hline 
		Description & Period & Position \\
		\hline
		Cusp/Fold & $|b|^{-p/q}$ & $|b|$ \\
		Cusp/E-focus & $|b|^{-p/q}$ & $|b|$ \\ 
		Cusp/P. orbit & $|b|^{-p/q}$ & $|b|$ \\ 
		Cusp/Polycycle & $|b|^{-p/q}$ & $|b|$ \\ 
		Cusp/N-focus & $|b|^{-p/q}$ & $|b|$ \\ 
		Cusp/Cusp & $|b|^{-p/q}$ & $|b|$ \\ 
		
		N-focus/Fold & $|b|^{-1/n}$ & $|b|^{1/n}$ \\ 		
		N-focus/E-focus & $|b|^{-1/n}$ & $|b|^{1/n}$ \\ 
		N-focus/P. orbit & $|b|^{-1/n}$ & $|b|^{1/n}$ \\ 
		N-focus/Polycycle & $|b|^{-1/n}$ & $|b|^{r}$ \\ 
		N-focus/N-Focus &  $|b|^{-1/n}$ & $|b|^{1/n}$ \\ 
		
		Polycycle/Fold & $-\ln |b|$ & $|b|^{r}$ \\ 		
		Polycycle/E-focus & $-\ln |b|$ & $|b|^{r}$ \\ 
		Polycycle/P. orbit & $-\ln |b|$ & $|b|^{r}$ \\ 
		Polycycle/Polycycle & $-\ln |b|$ & $|b|^{r}$ \\
		
		P. orbit/Fold & constant & $|b|^{1/n}$ \\ 
		P. orbit/E-focus & constant & $|b|^{1/n}$ \\ 		
		P. orbit/P. orbit & constant & $|b|^{1/n}$ \\ 
		
		E-focus/Fold & constant & $|b|^{1/n}$ \\ 
		E-focus/E-focus & constant & $|b|^{1/n}$ \\ 		
		
		Fold/Fold & $|b|^{1/2n}$ & $|b|^{1/2n}$ \\ 
		\hline
	\end{tabular}
\end{table}

The paper is organized as follows. In Section~\ref{Sec1} we setup the conditions which we shall work with and prove a topological version of the pseudo-Hopf bifurcation. In Section~\ref{Sec2}, we state and prove the \emph{position theorems}, which establish the existence, uniqueness, and location of limit cycles. Section~\ref{Sec3} examines the half-monodromy and period functions associated with elementary and nilpotent centers/foci, cusps, periodic orbits, and hyperbolic polycycles. In Section~\ref{Sec4}, we derive the period function and present Table~\ref{Table1}. Finally, an~\hyperref[App]{Appendix} contains several technical results that were deferred from the main text.

\section{System Setup and Topological Bifurcation Result}\label{Sec1}

This section is devoted to setting up the one-parameter family of piecewise-smooth planar vector fields under consideration and to establishing a first topological result on the bifurcation of limit cycles within this family.

Consider the piecewise continuous planar vector field given by
\begin{equation}\label{eq:Z}
Z(x,y):=
	\left\{\begin{array}{ll} 
		Z^+(x,y):=\bigl(P^+(x,y),Q^+(x,y)\bigr), &\text{if } y>0, \vspace{0.2cm} \\
		Z^-(x,y):=\bigl(P^-(x,y),Q^-(x,y)\bigr), &\text{if } y<0,
	\end{array}\right.
\end{equation}
with switching set $\Sigma:=\{(x,y)\in\mathbb{R}^2\colon y=0\}$, and such that $Z^\pm$ are well defined and locally Liphchitz in $\Sigma^\pm:=\{(x,y)\in\mathbb{R}^2\colon \pm y\geqslant0\}$. Let $\psi^\pm(t;x,y)$ denote the solution of $Z^\pm$ with initial condition $\psi^\pm(0;x,y)=(x,y)$. Throughout this paper we suppose that $Z$ satisfies the following hypothesis.
\begin{enumerate}[label=$(H_1)$]
	\item\label{H1} There are $x_0>0$ and continuous functions $\tau^\pm\colon(0,x_0)\to\mathbb{R}$ such that for every $x\in(0,x_0)$, the following statements hold:
	\begin{enumerate}[label=(\roman*)]
		\item $\psi^\pm(\tau^\pm(x);x,0)\in\Sigma$;
		\item $\tau^+(x)\tau^-(x)<0$;
		\item $\psi^\pm(t;x,0)\in\Sigma^\pm\setminus\Sigma$ for every $t\in I_x^\pm$, where $I_x^\pm\subset\mathbb{R}$ is the open interval bounded by $0$ and $\tau^\pm(x)$;
		\item $\varphi^\pm(x)<0$, where $(\varphi^\pm(x),0):=\psi^\pm(\tau^\pm(x);x,0)$;
		\item $\varphi^+(x)\neq\varphi^-(x)$;
		\item $\varphi^\pm$ can be continuously extended to $x=0$ and satisfies $\varphi^\pm(0)=0$.
	\end{enumerate}
\end{enumerate}

Notice that~\ref{H1} ensures the existence of continuous half-return maps $\varphi^\pm\colon[0,x_0)\to\Sigma$ for $Z^\pm$. Moreover, it also imposes that the orientation of the orbits starting at $\Sigma_0:=[0,x_0)\times\{0\}$ agrees in such way that a first return map is well defined in $\Sigma_0$ and has $0$ as a fixed point (see Figure~\ref{Fig1}). In particular, we can introduce the continuous displacement function  
\begin{equation}\label{1}
	\Delta_0(x):=\delta\bigl(\varphi^+(x)-\varphi^-(x)\bigr),
\end{equation}  
where $\delta\in\{-1,1\}$ is given by  
\begin{equation}\label{6}
	\delta:=\begin{cases}
		1, & \text{if } \tau^+(x)<0<\tau^-(x), \vspace{0.2cm} \\
		-1, & \text{if } \tau^-(x)<0<\tau^+(x),
	\end{cases}
\end{equation}  
for any $x\in(0,x_0)$. Observe that the value of $\delta$ in~\eqref{6} is independent of $x\in(0,x_0)$.  From~\ref{H1}, we obtain $\Delta_0(0)=0$ and $\Delta_0(x)\neq0$ for every $x\in(0,x_0)$. Hence, we may define  
\begin{equation}\label{10}
	\sigma:=\operatorname{sign}\bigl(\Delta_0(x)\bigr)\in\{-1,1\},
\end{equation}  
for any $x\in(0,x_0)$, which is also independent of the choice of $x$.

In simple terms, $\delta$ controls the orientation of the orbits starting at $\Sigma_0$ and $\sigma$ provides the stability of the fixed point. More precisely, these orbits rotate \emph{clockwise} (resp. \emph{counterclockwise}) around the origin if $\delta=1$ (resp. if $\delta=-1$) and the fixed point is \emph{stable} (resp. \emph{unstable}) if $\sigma=-1$  (resp. if $\sigma=1$). 

In addition to the quantities $\delta$ and $\sigma$, we introduce the quantity $\mu\in\{-1,1\}$ given by
\begin{equation}\label{42}
	\mu:=-\sigma\delta,
\end{equation}
which will play a key role in describing the bifurcation of limit cycles.

\begin{figure}[ht]
	\begin{center}
		\begin{overpic}[height=4cm]{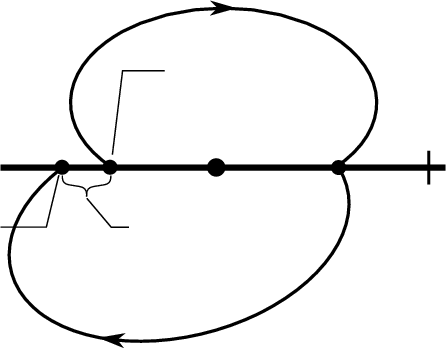} 
				\put(49,43){$0$}
				\put(71,35){$x$}
				\put(94,32){$x_0$}
				\put(-19,25){$\varphi^-(x)$}
				\put(38,61){$\varphi^+(x)$}
				\put(29,24.5){$\Delta_0(x)>0$}
				\put(101,38){$\Sigma$}
				\put(85,5){$Z^-$}
				\put(85,70){$Z^+$}
		\end{overpic}
	\end{center}
	\caption{Illustration of $\varphi^\pm$ and $\Delta_0$, with $\delta=1$ and $\sigma=1$.}\label{Fig1}
\end{figure} 

Now, consider the one-parameter family $Z_b$, $b\in\mathbb{R}$, given by
\begin{equation}\label{2}
	Z_b(x,y)=
	\left\{\begin{array}{ll} 
		Z^+(x-b,y), &\text{if } y>0, \vspace{0.2cm} \\
		Z^-(x,y), &\text{if } y<0.
	\end{array}\right. 
\end{equation}
Observe that varying $b$ translates $Z^+$ relative to $Z^-$, thereby creating a sliding segment between the points $(0,0)$ and $(b,0)$ (see Figure~\ref{Fig2}).

Our first result establishes the existence of a limit cycle bifurcating from the origin of system \eqref{2} under the sole assumption of hypothesis~\ref{H1}, which is purely topological. For this reason, we refer to it as a {\it Topological pseudo-Hopf bifurcation.}

\begin{main}[Topological pseudo-Hopf]\label{M1}
	Consider family $Z_b$ given by \eqref{2} and suppose~\ref{H1} holds. Hence there is $b_0>0$ such that if $0<\mu b<b_0$, then $Z_b$ has at least one crossing periodic orbit in a neighborhood of the origin. Furthermore, if $(x^*(b),0)$, with $ x^*(b)>0,$ denotes an intersection of an existing limit cycle with $\Sigma$, then $x^*(b)>\max\{0,b\}$.
\end{main}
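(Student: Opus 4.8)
The plan is to translate the existence of a crossing periodic orbit into a zero of a one‑dimensional displacement function built from the half‑return maps of $Z_b$, produce that zero with the intermediate value theorem, and then read off the position estimate from the location of the sliding segment. First I record the half‑return maps of $Z_b$: since $Z_b^-=Z^-$ it is still $\varphi^-$ on $[0,x_0)$; since $Z_b^+(x,y)=Z^+(x-b,y)$ is $Z^+$ rigidly translated by $(b,0)$, its orbits are the $(b,0)$‑translates of those of $Z^+$, so the half‑return map of $Z_b^+$ is $\varphi_b^+(x):=\varphi^+(x-b)+b$, defined on $[b,b+x_0)$. On the common domain $J_b:=[\max\{0,b\},x_0+\min\{0,b\})$ I define the continuous displacement function $D_b(x):=\varphi_b^+(x)-\varphi^-(x)=\varphi^+(x-b)+b-\varphi^-(x)$; by~\ref{H1}(iii), applied to $Z^-$ and to the translate of $Z^+$, any zero $x^*\neq0$ of $D_b$ yields a periodic orbit of $Z_b$ through $(x^*,0)$ and $(\varphi^-(x^*),0)$, formed by an arc of $Z_b^+$ in $\{y>0\}$ and an arc of $Z^-$ in $\{y<0\}$. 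Finally, from~\eqref{1}, \eqref{10} and \eqref{42} one has $\varphi^+(x)-\varphi^-(x)=\delta\,\Delta_0(x)$, hence $\operatorname{sign}D_0(x)=\delta\sigma=-\mu$ for every $x\in(0,x_0)$.

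Next I produce a sign change of $D_b$. Fix a small $x_1\in(0,x_0)$, so that $D_0(x_1)$ has the fixed nonzero sign $-\mu$. Using~\ref{H1}(iv) and \ref{H1}(vi),
\[
	D_b\bigl(\max\{0,b\}\bigr)=
	\begin{cases}
		b-\varphi^-(b)>0, & b>0,\\
		\varphi^+(-b)+b<0, & b<0,
	\end{cases}
\]
whence $\operatorname{sign}D_b(\max\{0,b\})=\mu$ whenever $\mu b>0$. On the other hand $D_b(\max\{0,b\}+x_1)\to D_0(x_1)$ as $b\to0$ by continuity of $\varphi^\pm$, so there is $b_0\in(0,x_0-x_1)$ such that for $0<\mu b<b_0$ one has $\operatorname{sign}D_b(\max\{0,b\}+x_1)=-\mu\neq\mu$. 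Since $D_b$ is continuous on $J_b$, the intermediate value theorem gives $x^*(b)\in(\max\{0,b\},\max\{0,b\}+x_1)$ with $D_b(x^*(b))=0$; taking $x_1$ (hence $b_0$) small places $(x^*(b),0)$ in any prescribed neighborhood of the origin, and its companion intersection $(\varphi^-(x^*(b)),0)$ likewise tends to $(0,0)$ as $b\to0$.

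It remains to check that the resulting periodic orbit $\gamma_b$ is of crossing type and to prove the position statement; both follow from the fact that the closed segment $\overline{S_b}$ of $\Sigma$ joining $(0,0)$ and $(b,0)$ contains no crossing points of $Z_b$, since by construction its interior is the sliding segment and its endpoints are points where $Z^-$, resp.\ $Z_b^+$, is tangent to $\Sigma$ (cf.\ the discussion following~\eqref{2}). For $\gamma_b$: writing $\eta:=\varphi_b^+(x^*(b))=\varphi^-(x^*(b))$, one has $x^*(b)>\max\{0,b\}$, and by~\ref{H1}(iv) $\eta=\varphi^-(x^*(b))<0=\min\{0,b\}$ when $b>0$ and $\eta=\varphi^+(x^*(b)-b)+b<b=\min\{0,b\}$ when $b<0$; hence $(x^*(b),0),(\eta,0)\notin\overline{S_b}$, so $\gamma_b$ meets $\Sigma$ only at crossing points, i.e.\ $\gamma_b$ is a crossing periodic orbit. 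For the position statement: if $(x^*(b),0)$ with $x^*(b)>0$ is any intersection with $\Sigma$ of an existing crossing limit cycle of $Z_b$, it is a crossing point, hence $(x^*(b),0)\notin\overline{S_b}$; together with $x^*(b)>0$ this forces $x^*(b)>\max\{0,b\}$ (immediate for $b\le0$; for $b>0$ it excludes $x^*(b)\in(0,b]$).

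I expect the main difficulty to be the bookkeeping of the second and third paragraphs: one must keep careful track of how the domain $J_b$ of the return maps slides with $b$ so that the two test points remain inside it, and — more importantly — verify that the root $x^*(b)$ and its companion $\eta$ land strictly outside the closed sliding segment $\overline{S_b}$, which is what guarantees that the periodic orbit produced is genuinely of crossing type and that any crossing limit cycle meets $\Sigma$ beyond $\max\{0,b\}$. Once the displacement function and its moving domain are correctly set up, the existence of the zero is a one‑line application of the intermediate value theorem.
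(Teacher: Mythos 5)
Your proof is correct and follows essentially the same route as the paper: both build the displacement function $\varphi^+(x-b)+b-\varphi^-(x)$ (the paper multiplies it by $\delta$), exhibit a sign change on the relevant interval using the conditions in $(H_1)$, and apply the intermediate value theorem. You handle both orientations $\mu=\pm1$ directly rather than reducing to $\mu=1$ by the reflection $(x,y)\mapsto(x,-y)$, and you are more explicit about verifying the crossing type and the ``Furthermore'' claim via the location of the closed sliding segment, but the underlying argument is the same.
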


\begin{proof}
We carry out the proof for the case $\mu=1$. The case $\mu=-1$ follows through the change of variables $(x,y)\mapsto (x,-y)$.

Since the half-return maps $\varphi^\pm$ are well defined for $x\in[0,x_0)$, it follows that for each $b\in(-x_0,x_0)$ the perturbed displacement function 
\[
	\Delta(x,b)=\delta\bigl(\varphi^+(x-b)+b-\varphi^-(x)\bigr),
\]
is well defined for $x\in[\max\{0,b\},\min\{x_0,x_0+b\})$. Observe that for each $b>0$, on one hand we have
\begin{equation}\label{3}
	\operatorname{sign}\bigl(\Delta(b,b)\bigr)=\operatorname{sign}\Bigl(\delta\bigl(b-\varphi^-(b)\bigr)\Bigr)=\delta,
\end{equation}
with the last equality following from $\varphi^-(b)<0$. On the other hand, given $x_1\approx x_0$, we have 
\begin{equation}\label{4}
	\begin{array}{ll }
		\Delta(x_1,b) &= \delta\bigl(\varphi^+(x_1-b)+b-\varphi^-(x_1)\bigr) \vspace{0.2cm} \\
		\quad &= \delta\bigl(\varphi^+(x_1-b)+b-\varphi^-(x_1)+\varphi^+(x_1)-\varphi^+(x_1)\bigr) \vspace{0.2cm} \\
		&=\delta\bigl(\varphi^+(x_1-b)+b-\varphi^+(x_1)\bigr)+\Delta_0(x_1)
		\end{array}
\end{equation}
(see Figure~\ref{Fig2}).
\begin{figure}[ht]
	\begin{center}
		\begin{minipage}{4cm}
			\begin{center} 
				\begin{overpic}[height=3cm]{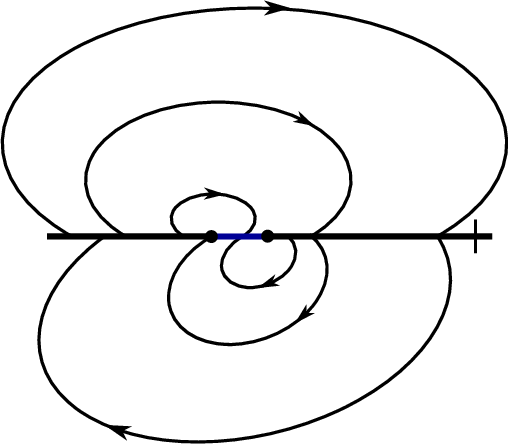} 
					\put(91,32){$x_0$}
					\put(38,31){$b$}
					\put(52.5,42.5){$0$}
				\end{overpic}
				
				$b<0$.
			\end{center}
		\end{minipage}
		\begin{minipage}{4cm}
			\begin{center} 
				\begin{overpic}[height=3cm]{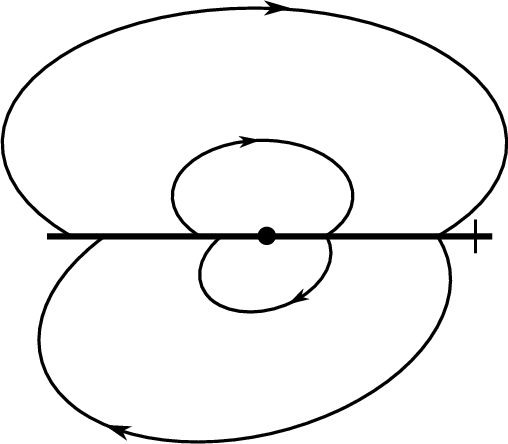} 
					\put(91,32){$x_0$}
					\put(50,31.5){$0$}
				\end{overpic}
				
				$b=0$.
			\end{center}
		\end{minipage}
		\begin{minipage}{4cm}
			\begin{center} 
				\begin{overpic}[height=3cm]{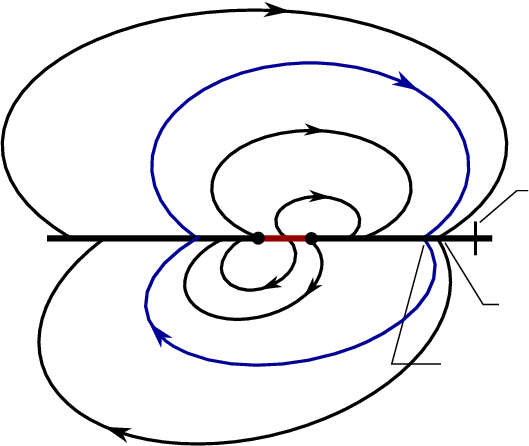} 
					\put(61.5,31){$b$}
					\put(46,41){$0$}
					\put(101,46){$x_0$}
					\put(96,24.5){$x_1$}
					\put(85,13.5){$x^*$}
				\end{overpic}
		
				$b>0$.
			\end{center}
		\end{minipage}
	\end{center}
\caption{Illustration of $\Delta(x,b)$ with $\delta=1$ and $\sigma=-1$.}\label{Fig2}
\end{figure}
Since $\Delta_0(x_1)\neq0$ by~\eqref{1} and $\varphi^+(x_1-b)+b-\varphi^+(x_1)\to0$ as $b\to0$ by continuity, we have from~\eqref{4} that $\operatorname{sign}\bigl(\Delta(x_1,b)\bigr)=\sigma$. This in addition with~\eqref{3} implies,
\[
	\operatorname{sign}\bigl(\Delta(b,b)\bigr)\operatorname{sign}\bigl(\Delta(x_1,b)\bigr)=\delta\sigma=-\mu=-1.
\]
Therefore $\Delta(x_1,b)$ and $\Delta(b,b)$ have opposite signs and thus we have from continuity that there is at least one $x^*(b)\in(b,x_1)$ such that $\Delta(x^*(b),b)=0$. In particular, from $x^*(b)>b$ we obtain that there is a crossing periodic orbit of $Z_b$ passing through $x^*(b)$.  
\end{proof}

\section{Estimating the Limit Cycle Position}\label{Sec2}

The information on the position of the limit cycle provided by Theorem~\ref{M1} is rather rudimentary, as it relies only on a few available topological assumptions. In this section, we introduce additional hypotheses that allow us to strengthen the estimation of the limit cycle’s position.

To this end, we first recall that a given function $f\colon[a,b)\to\mathbb{R}$, $b>a,$ is said to be differentiable at $x=a$ (resp. of class $C^k$) if there exist $\varepsilon>0$ and a function $\overline{f}\colon(a-\varepsilon,b)\to\mathbb{R}$ differentiable at $x=a$ (resp. of class $C^k$), such that $\overline{f}|_{[a,b)}=f$.

\subsection{A First Improvement}  A first, albeit rather weak, hypothesis that we can add is the following:
\begin{enumerate}[label=$(H_2)$]
	\item\label{H2} $\varphi^\pm$ are differentiable at $x=0$.
\end{enumerate}
In particular, it follows from~\ref{H2} that there are $\alpha_1^\pm\in\mathbb{R}$ such that
\begin{equation}\label{5}
	\varphi^\pm(x)=\alpha_1^\pm x+o(x),
\end{equation}
where $o(x)$ stands for \emph{Landau's little-o notation}, from which we recall that we have $f\in o(g(x))$ if $f(x)/g(x)\to0$ as $x\to0$. From~\eqref{5}, it follows that the displacement function~\eqref{1} can be written as
\[
	\Delta_0(x)=V_1x+o(x),
\]
where $V_1=\delta(\alpha_1^+-\alpha_1^-)$. 

As a first slight improvement over Theorem~\ref{M1}, under the additional hypothesis~\ref{H2}, the following result provides a sharper estimate of the limit cycle’s position, although uniqueness is not yet guaranteed.

\begin{proposition}[Differentiable pseudo-Hopf]\label{M2}
	Consider family $Z_b$ given by \eqref{2} and suppose~\ref{H1} and~\ref{H2} hold. If $\alpha_1^+\neq\alpha_1^-$, then there is $b_0>0$ such that for each $b$ satisfying $0<\mu b<b_0$, system $Z_b$ has at least one crossing periodic orbit in a neighborhood of the origin. Furthermore, any such orbit $\gamma$ pass through a point $(x_\gamma(b),0)$ characterized by
	\begin{equation}\label{28}
		x_\gamma(b)=\frac{1-\alpha_1^+}{|V_1|}|b|+o(b).
	\end{equation}
	Moreover if $\mu b<0$, then there is no periodic orbit passing through $\Sigma_0$.
\end{proposition}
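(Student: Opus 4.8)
The plan is to study everything through the perturbed displacement function
\[
\Delta(x,b)=\delta\bigl(\varphi^+(x-b)+b-\varphi^-(x)\bigr)
\]
from the proof of Theorem~\ref{M1}, now refined by~\ref{H2}. Writing $\varphi^\pm(x)=\alpha_1^\pm x+\rho^\pm(x)$ with $\rho^\pm(x)=o(x)$, one gets
\[
\Delta(x,b)=V_1\,x+\delta(1-\alpha_1^+)\,b+R(x,b),\qquad R(x,b):=\delta\bigl(\rho^+(x-b)-\rho^-(x)\bigr),
\]
where $V_1=\delta(\alpha_1^+-\alpha_1^-)\neq0$. Three elementary facts organize the argument: (i) since $\varphi^\pm(x)<0$ for $x>0$ small, $\alpha_1^\pm\leqslant0$, hence $1-\alpha_1^+\geqslant1$; (ii) since $\Delta_0(x)=V_1x+o(x)$, relation~\eqref{10} forces $\sigma=\operatorname{sign}(V_1)$, so the root of the linear part is $x^*(b):=-\delta(1-\alpha_1^+)b/V_1=\mu(1-\alpha_1^+)b/|V_1|$, which is positive iff $\mu b>0$, in which case $x^*(b)=\tfrac{1-\alpha_1^+}{|V_1|}|b|$; (iii) $\Delta(x,b)=0$ is equivalent to the fixed-point relation $x=x^*(b)-R(x,b)/V_1$. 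As in Theorem~\ref{M1}, I would carry out the case $\mu=1$ and obtain $\mu=-1$ from the reflection $(x,y)\mapsto(x,-y)$.

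The technical input is a uniform smallness estimate for $R$: for every $\eta>0$ there is $c_\eta>0$ with $|R(x,b)|\leqslant\eta(|x|+|b|)$ for $0\leqslant x\leqslant c_\eta$ and $|b|\leqslant c_\eta$, which is immediate from $\rho^\pm(u)=o(u)$ together with $|x-b|\leqslant|x|+|b|$. Inserting this into the fixed-point relation, every zero $x\geqslant0$ of $\Delta(\cdot,b)$ in $[\max\{0,b\},c_\eta]$ obeys $|x-x^*(b)|\leqslant\tfrac{\eta}{|V_1|}(x+|b|)$. I then split into the two regimes. If $\mu b<0$ then $x^*(b)<0\leqslant x$, and this inequality first gives an a priori bound $x\leqslant\tfrac{\eta}{|V_1|-\eta}|b|$ and hence $|x-x^*(b)|\leqslant\tfrac{\eta}{|V_1|-\eta}|b|$; but $|x-x^*(b)|=x+|x^*(b)|\geqslant|x^*(b)|\geqslant\tfrac{1}{|V_1|}|b|$, and fixing once and for all $\eta<\tfrac{|V_1|}{|V_1|+1}$ makes these two bounds incompatible. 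Thus $\Delta(\cdot,b)$ has no zero near the origin, which is the nonexistence (``moreover'') assertion.

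If $\mu b>0$, existence of a crossing periodic orbit through $\Sigma_0$ near the origin follows either directly from Theorem~\ref{M1} or, locally, from an intermediate-value argument comparing the (opposite) signs of $\Delta$ at $x^*(b)(1-\theta)$ and $x^*(b)(1+\theta)$ for a small fixed $\theta$ and correspondingly small $\eta$; here one uses that $\mu=1$ forces $\alpha_1^+<\alpha_1^-\leqslant0$, whence $1-\alpha_1^+\geqslant1+|V_1|$ and $x^*(b)>b$, so these points lie in the domain of $\Delta(\cdot,b)$. For the location of any such orbit, fix $\eta_0\in(0,\tfrac{|V_1|}{|V_1|+1})$; for $\eta\leqslant\eta_0$ the inequality above together with $x^*(b)=\tfrac{1-\alpha_1^+}{|V_1|}|b|$ gives an a priori bound $x\leqslant K_0|b|$ with $K_0$ independent of $\eta$ and of $b$, and reinserting this into the smallness estimate yields $|x-x^*(b)|\leqslant\tfrac{\eta(K_0+1)}{|V_1|}|b|$. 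Since $\eta$ is arbitrarily small, this is exactly $x_\gamma(b)=\tfrac{1-\alpha_1^+}{|V_1|}|b|+o(b)$, i.e.~\eqref{28}.

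The one point requiring genuine care is that $c_\eta$ shrinks as $\eta\to0$, so one cannot simply let $\eta\to0$ on a fixed neighborhood of the origin. This is precisely what the two-step scheme ``a priori bound with a fixed $\eta_0$, then bootstrap with arbitrarily small $\eta$'' is designed to handle: the bound $x=O(|b|)$ guarantees that, once $|b|$ is small, every zero lying in the fixed neighborhood $[\max\{0,b\},c_{\eta_0}]$ automatically falls inside $[\max\{0,b\},c_\eta]$, after which the refined estimate applies. The remaining bookkeeping is merely to check that all the $x$-values appearing stay in the domain $[\max\{0,b\},\min\{x_0,x_0+b\})$ of $\Delta(\cdot,b)$, which for $\mu=1$ reduces to the inequality $(1-\alpha_1^+)/|V_1|>1$ recorded above.
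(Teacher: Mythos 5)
Your argument is correct, and it takes a genuinely different route from the paper's. Both proofs start from the same decomposition
\[
\Delta(x,b)=V_1x+\delta(1-\alpha_1^+)b+R(x,b),\qquad R(x,b)=o(|x|+|b|),
\]
but from there the paper packages this as differentiability of $\Delta$ at $(0,0)$ and then appeals to the Differentiable Implicit Function Theorem (Theorem~\ref{IFT1} in the Appendix): part $(a)$ gives at least one solution curve $x_\gamma(b)$, part $(b)$ gives that \emph{any} such curve is differentiable at $b=0$ with $x_\gamma'(0)=-\partial_b\Delta(0,0)/\partial_x\Delta(0,0)$, which yields~\eqref{28} after computation. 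You instead unfold that argument by hand: you rewrite $\Delta=0$ as the fixed-point relation $x=x^*(b)-R(x,b)/V_1$, establish a uniform smallness bound $|R(x,b)|\leqslant\eta(|x|+|b|)$ on an $\eta$-dependent neighborhood, and extract the asymptotics via an a~priori bound followed by a bootstrap. You also correctly identify—and correctly handle—the one real subtlety, namely that $c_\eta\to0$ as $\eta\to0$, which is exactly why the two-stage scheme (fixed $\eta_0$ for the a~priori $O(|b|)$ bound, then $\eta\to0$ for the $o(|b|)$ refinement) is needed. Your nonexistence argument for $\mu b<0$ via the incompatibility of the two bounds on $|x-x^*(b)|$ is also sound and is, if anything, more explicit than the paper's sign-comparison argument. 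The trade-off is the usual one: the paper's proof is shorter and leans on a black-box IFT result supplied in the Appendix, while yours is self-contained and makes the mechanism (linear dominance of $V_1x$ plus a bootstrap) completely visible.
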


\subsection{Smooth Pseudo-Hopf Bifurcation}

In order to guarantee the uniqueness of the bifurcating limit cycle, we replace~\ref{H2} by the stronger hypothesis:
\begin{enumerate}[label=$(H_2')$]
	\item\label{H2'} $\varphi^\pm$ are of class $C^k$, for some $k\in\mathbb{Z}_{\geqslant1}\cup\{\infty\}$.
\end{enumerate}
Under this hypothesis, we can write
\begin{equation}\label{11}
	\varphi^\pm(x)=\sum_{i=1}^{k}\alpha_i^\pm x^i+o(x^k),
\end{equation}
with $\alpha_i^\pm\in\mathbb{R}$, $i\in\{1,\dots,k\}$, and $o(x^k)$ denoting a flat term if $k=\infty$. Replacing~\eqref{11} at the displacement function~\eqref{1} we obtain
\begin{equation}\label{35}
	\Delta_0(x)=\sum_{i=1}^{k}V_ix^i+o(x^k),
\end{equation}
with $V_i=\delta(\alpha_i^+-\alpha_i^-)$. Before stating our third result, we note that $O(g(x))$ stands for \emph{Landau's big-O notation}, from which we recall that we have $f\in O(g(x))$ if there is a constant $C\geqslant0$ such that $|f(x)|\leq C |g(x)|$ for $x$ near $0$.

Under the additional hypothesis~\ref{H2'}, the following result not only provides a sharper estimate of the limit cycle’s position but also establishes its uniqueness and hyperbolicity.

\begin{main}[Smooth pseudo-Hopf]\label{M3}
	Consider family $Z_b$ given by \eqref{2} and suppose~\ref{H1} and~\ref{H2'} hold. If $V_i\neq0$ for some $i\in\{1,\dots,k\}$, then there is $b_0>0$ such that for each $0<\mu b<b_0$, system $Z_b$ has exactly one crossing periodic orbit in a neighborhood of the origin. Furthermore, such a periodic orbit is a hyperbolic stable (resp. unstable) crossing limit cycle if $V_N<0$ (resp. $V_N>0$), and it is position given by
	\begin{equation}\label{12}
		x(b)=\sqrt[N]{\frac{1-\alpha_1^+}{|V_N|}}\sqrt[N]{|b|}+o\bigl(\sqrt[N]{|b|}\,\,\bigr),
	\end{equation}
	where $N\in\{1,\dots,k\}$ is the minimal index such that $V_N\neq0$. Moreover, if $N<k$ (in particular if $k=\infty)$, then~\eqref{12} is replaced by 
	\begin{equation}\label{12x}
		x(b)=\sqrt[N]{\frac{1-\alpha_1^+}{|V_N|}}\sqrt[N]{|b|}+O\left(\sqrt[N]{|b|}^2\right).
	\end{equation}
	In addition to that, if $\mu b<0$, then there is no periodic orbit passing through $\Sigma_0$.
\end{main}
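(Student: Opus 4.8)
The plan is to study the zeros of the perturbed displacement function $\Delta(x,b)=\delta\bigl(\varphi^+(x-b)+b-\varphi^-(x)\bigr)$ from the proof of Theorem~\ref{M1}, which by~\ref{H1} is defined on $[\max\{0,b\},\min\{x_0,x_0+b\})$ and whose zeros are precisely the crossing periodic orbits of $Z_b$ meeting $\Sigma_0$. First I would record, using $\varphi^+(x-b)-\varphi^+(x)+b=b\int_0^1\bigl(1-\varphi^+{}'(x-tb)\bigr)\,dt$ and~\ref{H2'}, the two identities $\Delta(x,b)=\Delta_0(x)+\delta(1-\alpha_1^+)\,b\,\bigl(1+\widetilde\eta(x,b)\bigr)$ and $\Delta_0(x)=\sigma|V_N|\,x^N\bigl(1+h(x)\bigr)$, where $\widetilde\eta$ and $h$ are continuous with $\widetilde\eta(0,0)=h(0)=0$. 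Here I use that $\Delta_0(x)=x^N g(x)$ with $g$ of class $C^{k-N}$ and $g(0)=V_N$ (Hadamard's lemma, since $V_1=\dots=V_{N-1}=0$), so $h(x)=O(x)$ when $N<k$ while $h$ is merely continuous when $N=k$, and $\widetilde\eta$ is $C^1$ when $k\ge2$. The signs that will matter: $\sigma=\operatorname{sign}(V_N)$ --- forced by~\ref{H1}, because $\Delta_0(x)\sim V_Nx^N$ near $0$ and $\Delta_0$ has constant sign $\sigma$ on $(0,x_0)$; $\alpha_1^\pm=\varphi^\pm{}'(0)\le0$ since $\varphi^\pm<0$ on $(0,x_0)$, whence $1-\alpha_1^+\ge1$; and, from $\mu=-\sigma\delta$, $\delta b=-\sigma|b|$ whenever $\mu b>0$ and $\delta b=\sigma|b|$ whenever $\mu b<0$.

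Next I would fix a small $\rho>0$ and split the domain of $\Delta(\cdot,b)$ into an outer piece $[\rho,x_1]$, $x_1<x_0$, and an inner piece $[\max\{0,b\},\rho]$. On the outer piece, $|\Delta(x,b)-\Delta_0(x)|\le(L+1)|b|$ (Lipschitz bound on $\varphi^+$) while $\inf_{[\rho,x_1]}|\Delta_0|>0$ by~\ref{H1}, so for $|b|$ small $\Delta(\cdot,b)$ has no zero there and retains the sign $\sigma$. On the inner piece I would blow up, writing $x=|b|^{1/N}u$ and $G(u,b):=\sigma^{-1}|b|^{-1}\Delta(|b|^{1/N}u,b)$, so that $G(u,b)=|V_N|\,u^N\bigl(1+h(|b|^{1/N}u)\bigr)\mp(1-\alpha_1^+)\bigl(1+\widetilde\eta(|b|^{1/N}u,b)\bigr)$, with $-$ when $\mu b>0$ and $+$ when $\mu b<0$; here $u$ ranges over $[\ell(b),\rho|b|^{-1/N}]$ with $\ell(b)\to0$ (and $\ell(b)\equiv1$ in the case $N=1$, $b>0$). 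If $\mu b<0$, choosing $\rho$ small enough that both multiplicative error factors lie in $[1/2,2]$ on $[0,\rho]\times[-\rho,\rho]$ forces $G(u,b)>0$ for all admissible $u$ and all small $|b|$; with the outer estimate this gives $\Delta(\cdot,b)\ne0$ throughout, hence no periodic orbit through $\Sigma_0$.

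The heart of the argument is the case $\mu b>0$. Then $G(\cdot,b)\to G_0(u):=|V_N|u^N-(1-\alpha_1^+)$ uniformly on bounded $u$-sets, and $G_0$ has the unique positive zero $u_0=\bigl((1-\alpha_1^+)/|V_N|\bigr)^{1/N}$, which is simple since $G_0'(u_0)=N|V_N|u_0^{N-1}>0$, with $G_0<0$ on $[0,u_0)$ and $G_0>0$ on $(u_0,\infty)$; moreover $u_0>\ell(b)$ for $b$ small (in the case $\ell(b)\equiv1$ this amounts to $1-\alpha_1^+>\alpha_1^--\alpha_1^+=|V_1|$, i.e.\ to $\alpha_1^-<1$). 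Having fixed $\rho$ small beforehand, the same ``error factors close to $1$'' estimate confines every zero of $G(\cdot,b)$ to a fixed small interval $(c_-,c_+)\ni u_0$, with $c_\pm\to u_0$ as $\rho\to0$. On that interval I would show $\partial_uG(u,b)\to G_0'(u)$ uniformly, by differentiating $\Delta$ in $x$ and using $\Delta_0'(x)=NV_Nx^{N-1}+o(x^{N-1})$ together with the Lipschitz (or, when $k=1$, the continuity) bound on $\varphi^+{}'$; this is exactly where the borderline case $N=k$ must be treated by hand, as there $\Delta_0(x)/x^N$ is only continuous and the monotonicity of $G(\cdot,b)$ must be read off from $\partial_x\Delta$ rather than from a joint $C^1$ dependence. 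Strict monotonicity near $u_0$ plus the sign change then yield exactly one zero $u^*(b)$, with $u^*(b)\to u_0$; hence $Z_b$ has exactly one crossing periodic orbit near the origin, located at $x(b)=|b|^{1/N}u^*(b)=\sqrt[N]{(1-\alpha_1^+)/|V_N|}\;\sqrt[N]{|b|}+o\bigl(\sqrt[N]{|b|}\bigr)$, and $x(b)>\max\{0,b\}$ as in Theorem~\ref{M1}. When $N<k$ (in particular $k=\infty$), $h$ and $\widetilde\eta$ are $C^1$, so $G$ is $C^1$ jointly in $(u,|b|^{1/N})$ and the implicit function theorem upgrades this to $u^*(b)=u_0+O(|b|^{1/N})$, which is~\eqref{12x}.

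Finally, hyperbolicity and stability come out of the same blow-up: $\partial_x\Delta(x(b),b)=\sigma|b|^{\,1-1/N}\bigl(N|V_N|u_0^{N-1}+o(1)\bigr)\ne0$ for small $b\ne0$, so the first-return map of $Z_b$ has derivative $\ne1$ at the fixed point $x(b)$, i.e.\ the cycle is hyperbolic; and since $\operatorname{sign}\bigl(\partial_x\Delta(x(b),b)\bigr)=\sigma$, the dictionary between the sign of the displacement at a fixed point and its stability --- the one that makes the origin of $Z_0$ stable precisely when $\sigma=-1$ (cf.\ the paragraph following~\eqref{10}) --- makes the limit cycle stable precisely when $\sigma=-1$, that is when $V_N<0$. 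I expect the main obstacle to be the uniform control of $G(u,b)$ across all scales of the inner region, namely reconciling the local implicit-function/monotonicity analysis near $u_0$ with the behaviour for the remaining, unboundedly large values of $u$ --- which is what dictates the order ``first fix $\rho$, then let $b\to0$'' --- together with the regularity loss in the borderline case $N=k$.
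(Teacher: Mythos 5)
Your proof is correct and follows the same fundamental route as the paper's: both blow up the displacement function by $x\sim|b|^{1/N}u$ (the paper writes $x=\beta y$ with $b=\beta^N$), reduce the zero count to the limiting polynomial $|V_N|u^N-(1-\alpha_1^+)$, locate its unique positive root $u_0=\bigl((1-\alpha_1^+)/|V_N|\bigr)^{1/N}$, and read off position, hyperbolicity and stability from the simplicity of that root. The mechanism for extracting existence and uniqueness of the zero curve differs, though: the paper invokes the Continuous Implicit Function Theorem (Theorem~\ref{IFT2}$(c)$) as a black box, after reducing $\mu=-1$ to $\mu=1$ by the reflection $(x,y)\mapsto(x,-y)$; you work with both signs of $\mu b$ directly via the identities $\delta b=\mp\sigma|b|$, decompose the domain into an outer region $[\rho,x_1]$ controlled by a Lipschitz bound on $\varphi^+$ and an inner region where uniform convergence $\partial_uG\to G_0'$ yields strict monotonicity near $u_0$, and conclude uniqueness from the sign change. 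Your route is more elementary and more self-contained: it handles the borderline regularity case $N=k$ explicitly, and it closes a gap the paper leaves implicit, namely that after the blow-up the rescaled variable $u$ ranges over an unbounded interval $\bigl(\ell(b),\rho|b|^{-1/N}\bigr]$, so the local statement of the implicit function theorem near $u_0$ does not by itself preclude zeros at larger scales; your outer estimate and the uniform lower bound on the error factors are exactly what rules these out.
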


Notice that $\varphi^\pm(x)\leqslant0$ for all $x\in[0,x_0)$ implies $\alpha_1^\pm\leqslant0$. Thus~\eqref{12} and~\eqref{12x} are well defined.

As the reader will notice, Theorems~\ref{M1} and~\ref{M3} will be useful to obtain the leading terms displayed at Table~\ref{Table1}. On the other hand, Proposition~\ref{M2} will not be used in this paper. Nevertheless, we chose to include it here because it provides a middle-ground between Theorems~\ref{M1} and~\ref{M3}.

\subsection{Dulac-Type Pseudo-Hopf Bifurcation}

We now generalize our results by allowing the leading term of $\varphi^\pm$ to be given by a real exponent. To this end, we first recall some definitions from~\cite{MarVil2021}. 

\begin{definition}[Finitely flat functions]\label{Def1}
	Consider a function $\mathcal{R}\colon(0,\varepsilon)\to\mathbb{R}$, $\varepsilon>0$, of class $C^k$ for some $k\in\mathbb{Z}_{\geqslant0}\cup\{\infty\}$. Given $\ell\in\mathbb{R}$, we say that $\mathcal{R}$ is $(\ell,k)$-flat, and we write $\mathcal{R}\in\mathcal{F}_\ell^k$, if for each $\nu\in\mathbb{Z}_{\geqslant0}$, with $\nu\leqslant k$, there are constants $C>0$ and $\varepsilon_0>0$ such that 
	\[
		\left|\frac{d^\nu \mathcal{R}}{dx^\nu}(x)\right|\leqslant c\,x^{\ell-\nu},
	\]
	for every $x\in(0,\varepsilon_0)$.
\end{definition}

\begin{definition}[Dulac-type functions]\label{Def2}
	Consider a function $D\colon[0,\varepsilon)\to\mathbb{R}$, $\varepsilon>0$. We say that $D$ is of \emph{Dulac-type} if there are $\alpha\in\mathbb{R}\setminus\{0\}$ and $r$, $\ell>0$ such that
	\[
		D(x)=\alpha x^r+\mathcal{R}(x),
	\]
	with $\mathcal{R}\in\mathcal{F}^1_{r+\ell}$.
\end{definition}

Endowed with definitions~\ref{Def1} and~\ref{Def2}, we can now state our next hypothesis.
\begin{enumerate}[label=$(H_2'')$]
	\item\label{H2''} $\varphi^\pm$ are of Dulac-type.
\end{enumerate}
From~\ref{H2''} it follows there are $\alpha_1^\pm\in\mathbb{R}\setminus\{0\}$ and $r^\pm$, $\ell>0$ such that
\begin{equation}\label{15}
	\varphi^\pm(x)=\alpha_1^\pm x^{r^\pm}+\mathcal{R}^\pm(x),
\end{equation}
with $\mathcal{R}^\pm\in\mathcal{F}^1_{r^\pm+\ell}$. Hence, the displacement function~\eqref{1} can now be written as
\[
	\Delta_0(x)=V_1x^{r_m}+\mathcal{R}(x),
\]
with $\mathcal{R}\in\mathcal{F}^1_{r_m+\ell'}$, $r_m:=\min\{r^+,r^-\}$, $\ell'>0$, and
\begin{equation}\label{17}
	V_1=\left\{\begin{array}{ll}
			\delta\alpha_1^+, &\text{if } r^+<r^-, \vspace{0.2cm} \\
			\delta(\alpha_1^+-\alpha_1^-), &\text{if } r^+=r^-, \vspace{0.2cm} \\
			-\delta\alpha_1^-, &\text{if } r^+>r^-.
		\end{array}\right.
\end{equation}
Let also
\begin{equation}\label{52}
	R=\left\{\begin{array}{ll}
		1/r_m, &\text{if } r_m\geqslant1, \vspace{0.2cm} \\
		1, & \text{if } r_m\leqslant1.
	\end{array}\right.
\end{equation}
We are now ready to state the final main result of this section.

\begin{main}[Dulac-type pseudo-Hopf]\label{M4}
	Consider family $Z_b$ given by \eqref{2} and suppose~\ref{H1} and~\ref{H2''} hold. If $V_1\neq0$, then there is $b_0>0$ such that if $0<\mu b<b_0$, then $Z_b$ has exactly one crossing periodic orbit in a neighborhood of the origin. Furthermore, such a periodic orbit is a hyperbolic stable (resp. unstable) crossing limit cycle if $V_1<0$ (resp. $V_1>0$). 
	
	If $(r^+-1)(r^--1)\geqslant0$, then its position is given by
	\[
		x(b)=x_0\,|b|^R+o\bigl(|b|^R\bigr),
	\]
	where
	\begin{equation}\label{30}
		x_0=\left\{\begin{array}{ll}
				\displaystyle \frac{1}{|V_1|^{\frac{1}{r_m}}}, &\text{if }\; r_m\geqslant1, \, r^+>1, \vspace{0.2cm} \\
				\displaystyle \frac{1-\alpha_1^+}{|V_1|}, &\text{if }\; r_m\geqslant1, \, r^+=1, \vspace{0.1cm} \\
				\displaystyle 1, &\text{if }\; r_m\leqslant1, \, r^+<1,\, r^+\neq r^- \vspace{0.2cm} \\
				\displaystyle \frac{1-\alpha_1^+}{|\alpha_1^+|}, &\text{if }\; r_m\leqslant1, \, r^-<r^+=1, \vspace{0.2cm} \\
				\displaystyle \frac{|\alpha_1^+|^\frac{1}{r_m}}{\bigl||\alpha_1^+|^\frac{1}{r_m}-|\alpha_1^-|^\frac{1}{r_m}|\bigr|}, &\text{if }\; r_m\leqslant1, \, r^-<r^+<1.
			\end{array}\right.
	\end{equation}
	If $(r^+-1)(r^--1)<0$, then its position is given by
	\begin{equation}\label{43}
		x(b)=\left\{\begin{array}{ll}
			b+o(b), &\text{if } \mu=1, \vspace{0.2cm} \\
			o(b), &\text{if } \mu=-1.
		\end{array}\right.					
	\end{equation}
	Moreover, if $\mu b<0$ in any case, then there is no periodic orbit passing through $\Sigma_0$.
\end{main}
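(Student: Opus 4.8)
The plan is to follow the same root-finding scheme as in the proof of Theorem~A, but now extract the leading term precisely using the Dulac-type structure of $\varphi^\pm$, and use monotonicity of a suitably rescaled displacement function to get uniqueness and hyperbolicity. Throughout I work with $\mu=1$ (so $\delta\sigma=-1$), the case $\mu=-1$ following from $(x,y)\mapsto(x,-y)$. As before, for $b>0$ the perturbed displacement function is
\[
	\Delta(x,b)=\delta\bigl(\varphi^+(x-b)+b-\varphi^-(x)\bigr),
\]
well defined for $x\in[\max\{0,b\},\min\{x_0,x_0+b\})$. First I would establish existence on the interval $(b,x_1)$ with $x_1\approx x_0$ exactly as in Theorem~A: $\operatorname{sign}\Delta(b,b)=\delta$ from $\varphi^-(b)<0$, while $\operatorname{sign}\Delta(x_1,b)=\sigma$ from $\Delta_0(x_1)\neq 0$ plus continuity, giving a zero $x^*(b)\in(b,x_1)$. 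The nonexistence claim for $\mu b<0$ is symmetric: when $b<0$ one checks $\Delta(\,\cdot\,,b)$ keeps sign $\sigma$ on $[0,x_0+b)$, so no crossing orbit meets $\Sigma_0$.

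The heart of the matter is the leading-order analysis, and this splits according to the relative sizes of $r^+,r^-$ and their position relative to $1$. The key idea is to find the correct \emph{scale} $s(b)\to 0$ for the candidate zero and set $x=s(b)\,u$, then show $\Delta(s(b)u,b)/s(b)^{?}$ converges, uniformly in $u$ on compacts bounded away from the relevant endpoints, to an explicit limit function of $u$ having a unique simple zero $u=u^*$; then $x(b)=u^*\,s(b)+o(s(b))$. Concretely: if $r^+>1$ then $\varphi^+(x-b)+b = b - |\alpha_1^+|(x-b)^{r^+}+\cdots$, and since $x\gg b$ on the relevant range the term $\varphi^+(x-b)$ is $o(x)$, so $\Delta(x,b)\approx \delta b - \delta\varphi^-(x)$; when also $r_m=r^-\geqslant 1$ this is $\delta b + V_1 x^{1/R}\cdot(\text{sign stuff})$ and balancing $b\sim x^{r_m}$ gives scale $s(b)=|b|^{1/r_m}=|b|^R$, yielding $x_0=|V_1|^{-1/r_m}$. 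If instead $r^+=1$, then $\varphi^+(x-b)+b=\alpha_1^+(x-b)+b+\cdots=(1-\alpha_1^+)\,(\text{in }b)+\alpha_1^+ x+\cdots$ and the $\alpha_1^+ x$ term must be kept, combining with $\varphi^-(x)$; this produces the $(1-\alpha_1^+)/|V_1|$ constants exactly as in~\eqref{28}. The cases $r_m\leqslant 1$ (with $r^+<1$, or $r^-<r^+=1$, or $r^-<r^+<1$) are handled analogously, the point being that when $r^+<1$ the term $\varphi^+(x-b)$ dominates $b$ and one Taylor-expands $(x-b)^{r^+}=x^{r^+}(1-b/x)^{r^+}=x^{r^+}+o(x^{r^+})$ on the range $x\gg b$, so $b$ drops out to leading order and the scale is dictated by $\Delta_0$ alone, giving $R=1$ and the stated $x_0$. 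The genuinely exceptional case $(r^+-1)(r^--1)<0$ — one exponent above $1$, one below — is where the two vector fields "decouple" at incompatible scales: here one shows the zero sits at the \emph{boundary} scale $x\sim b$ itself, because away from $x=b$ the displacement has a definite sign; a direct expansion near $x=b+o(b)$ then pins down $x(b)=b+o(b)$ (and $o(b)$ when $\mu=-1$, where the geometry forces the cycle to collapse onto the left endpoint).

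For uniqueness and hyperbolicity I would pass to the $C^1$ category using that $\mathcal{R}^\pm\in\mathcal{F}^1_{r^\pm+\ell}$ controls $\frac{d}{dx}\mathcal{R}^\pm(x)=O(x^{r^\pm+\ell-1})$, so $(\varphi^\pm)'(x)=\alpha_1^\pm r^\pm x^{r^\pm-1}+O(x^{r^\pm+\ell-1})$. Differentiating, $\partial_x\Delta(x,b)=\delta\bigl((\varphi^+)'(x-b)-(\varphi^-)'(x)\bigr)$; rescaling by the scale $s(b)$ found above and using $V_1\neq 0$, one checks $\partial_x\Delta$ evaluated at the candidate zero is, to leading order, a nonzero constant times $s(b)^{r_m-1}$ (or the analogous power), hence nonvanishing for $b$ small. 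This simultaneously gives that $\Delta(\,\cdot\,,b)$ is strictly monotone near $x(b)$ — forcing uniqueness of the zero in the neighborhood — and that the first-return map $P_b(x)=x-\delta^{-1}\Delta(x,b)$ (appropriately oriented) has $|P_b'(x(b))|\neq 1$, i.e.\ the limit cycle is hyperbolic; its stability is read off from $\operatorname{sign} V_1$ via $\operatorname{sign}(P_b'(x(b))-1)$, matching the sliding-segment stability flip as $b$ changes sign. The main obstacle I anticipate is organizing the case analysis in~\eqref{30} uniformly: each branch requires keeping exactly the right terms of the two Dulac expansions (which term of $\varphi^+$ survives depends delicately on whether $r^+\lessgtr 1$ and on $x\gg b$ versus $x\sim b$), and verifying the convergence of the rescaled $\Delta$ is \emph{uniform} on the $u$-range so that the implicit-function/continuity argument for the zero actually applies — the flatness estimates on $\mathcal{R}^\pm$ are precisely what make these remainders uniformly small, so the bookkeeping must be done with care.
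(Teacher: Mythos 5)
Your overall strategy---rescale the displacement function at the correct scale $s(b)$, pass to a limit, and read off the leading coefficient, then use the $C^1$ control from the flatness estimates on $\mathcal R^\pm$ for uniqueness and hyperbolicity---is the right one and matches the paper's in spirit. You also correctly identify that $\partial_x\Delta$ evaluated at the candidate zero scales like $s(b)^{r_m-1}$ with a nonzero constant, which is exactly how the paper establishes hyperbolicity. However, the proposal has genuine gaps where you've correctly flagged difficulty but have no mechanism to resolve it.

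First, the "main obstacle" you name---organizing the five branches of \eqref{30}---is precisely what the paper resolves with a structural lemma you do not invoke: the inverse of a Dulac-type map is again Dulac-type, with exponent $\rho=1/r$ and leading coefficient $-|\alpha|^{-\rho}$ (Lemma~\ref{L1} and Remark~\ref{R8}). Together with the coordinate flips $(x,y,t)\mapsto(-x,y,-t)$ and $(x,y)\mapsto(x,-y)$, this lets one reduce to the single normalized situation $\mu=1$, $r^\pm\geqslant1$, $r^+\leqslant r^-$ (note that $\mu=1$ already forces $r^+\leqslant r^-$ via~\eqref{32}, so your case $r^->r^+$ with $\mu=1$ cannot occur---your heuristic happens to land on the right formula for $x_0$, but via a vacuous case). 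Without the inversion lemma, your "Taylor-expand and keep the right terms" approach would require reproving the $r_m<1$ branches from scratch, and you have not indicated how you would extract the constants $(1-\alpha_1^+)/|\alpha_1^+|$ or $|\alpha_1^+|^{1/r_m}/\bigl||\alpha_1^+|^{1/r_m}-|\alpha_1^-|^{1/r_m}\bigr|$.

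Second, for $(r^+-1)(r^--1)<0$ you write "one shows the zero sits at the boundary scale $x\sim b$ because away from $x=b$ the displacement has a definite sign," and then "a direct expansion near $x=b+o(b)$ pins down $x(b)=b+o(b)$." This does not yet work: after the natural rescaling $x=|\beta|y$, $b=|\beta|$, the limit function is $\Omega_0(y)=V_1(y-1)^{r_m}$, whose zero sits at the boundary $y=1$ of the domain $y\geqslant1$, so neither the implicit function theorem nor a sign-change argument applies directly. The paper's device is the further substitution $\Lambda(z,\beta):=\Omega(z^{1/r_m}+1,\beta)$, which straightens the boundary zero to an interior simple zero $z=0$ of $\Lambda_0(z)=V_1 z$, and even then the resulting $z(\beta)$ could a priori be negative, so existence of the crossing periodic orbit must be re-imported from the topological Theorem~\ref{M1}. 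You have not noticed either of these two issues. Likewise, the claim that $\mu=-1$ gives $x(b)=o(b)$ "because the geometry forces the cycle to collapse onto the left endpoint" is not an argument; the paper derives it by applying the $\mu=1$ result to the reflected system and translating back, $x(b)=\overline x(-b)+b=o(b)$. These are not bookkeeping details but the actual content of the hard case.
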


\subsection{Proof of the Results}

\begin{proof}[\textbf{Proof of Proposition~\ref{M2}}]
Again, we carry out the proof for the case $\mu=1$. The case $\mu=-1$ follows through the change of variables $(x,y)\mapsto (x,-y)$. Consider the perturbed displacement function 
\[
	\Delta(x,b)=\delta\bigl(\varphi^+(x-b)+b-\varphi^-(x)\bigr).
\]
It follows from~\eqref{5} that we can write
\begin{equation}\label{19}
	\Delta(x,b)=V_1x+\delta(1-\alpha_1^+)b+o(x-b)-o(x).
\end{equation}
Observe that given $f\in o(x)$, we can write $f(x)=r(x)x$ with $r(x):=f(x)/x$ satisfying $r(x)\to0$ as $x\to0$. Hence, we have from~\eqref{19} that
\[
	\begin{array}{ll}
		\Delta(x,b) &= V_1x+\delta(1-\alpha_1^+)b+r^+(x-b)(x-b)-r^-(x)x \vspace{0.2cm} \\
		&= V_1x+\delta(1-\alpha_1^+)b+\bigl(r^+(x-b)-r^-(x)\bigr)x-r^+(x-b)b \vspace{0.2cm} \\	
		&= V_1x+\delta(1-\alpha_1^+)b+r_x(x,b)x+r_b(x,b)b,
		\end{array}
\]
with $r_{x,b}(x,b)\to0$ as $(x,b)\to(0,0)$. In particular, $\Delta(x,b)$ is differentiable at $(x,b)=(0,0)$. Since $\partial_x\Delta(0,0)=V_1\neq0$, it follows from the \emph{Differentiable Implicit Function Theorem} (see Theorem~\ref{IFT1}) that there is at least one function $x_\gamma(b)$ such that $x(0)=0$ and $\Delta(x(b),b)\equiv0$. Moreover, any such function is differentiable at $b=0$ and satisfies
\[
	x_\gamma'(0)=-\frac{\partial_b\Delta(0,0)}{\partial_x\Delta(0,0)}=-\frac{\delta(1-\alpha_1^+)}{\delta(\alpha_1^+-\alpha_1^-)}=-\delta\frac{1-\alpha_1^+}{V_1}=\frac{1-\alpha_1^+}{|V_1|},
\]
where in the last equality we have used $\delta\sigma=-1$ and $\sigma=\operatorname{sign}(V_1)$. Observe that $\varphi^+(x)<0$ implies $\alpha_1^+\leqslant0$, which in turn implies $1-\alpha_1^+>0$. In particular, we have $(1-\alpha_1^+)/|V_1|>0$. Therefore, any such $x_\gamma(b)$ is characterized by
\begin{equation}\label{8}
	x_\gamma(b)=\frac{1-\alpha_1^+}{|V_1|}b+o(b).
\end{equation}
We claim that if $b>0$, then we have a crossing periodic orbit $\gamma$ associated to $x_\gamma(b)$. To this end, consider $b>0$ and observe that $\delta\sigma=-1$, $V_1=\delta(\alpha_1^+-\alpha_1^-)$, and $\sigma=\operatorname{sign}(V_1)$ implies $|V_1|=\alpha_1^--\alpha_1^+$ and thus
\begin{equation}\label{29}
	\frac{1-\alpha_1^+}{|V_1|}>1\Leftrightarrow1-\alpha_1^+>\alpha_1^--\alpha_1^+\Leftrightarrow1>\alpha_1^-,
\end{equation}
with the last inequality being true since $\alpha_1^-\leqslant0$. Hence, we conclude from~\eqref{8} that $x_\gamma(b)>b$, proving the claim.
	
Finally, we now prove that if $b<0$, then there is no periodic orbit. Indeed, from~\eqref{19} and $\alpha_1^+\leqslant0$, it follows
\begin{equation}\label{34}
	\operatorname{sign}\bigl(\partial_b\Delta(0,0)\bigr)=\operatorname{sign}\bigl(\delta(1-\alpha_1^+)\bigr)=\delta.
\end{equation}
Since $\delta\operatorname{sign}(V_1)=-1$, we conclude from~\eqref{34} that
\[
\left\{\begin{array}{ll}
	\Delta(x,b)<\Delta_0(x)<0, & \text{if } \delta=1, \vspace{0.2cm} \\
	\Delta(x,b)>\Delta_0(x)>0, & \text{if } \delta=-1,
\end{array}\right.
\]
for every $(x,b)$ close enough to the origin, with $b<0$. In either case,  we cannot have periodic orbits.	
\end{proof}

\begin{proof}[\textbf{Proof of Theorem~\ref{M3}}]
Again, we carry out the proof for the case $\mu=1$. The case $\mu=-1$ follows through the change of variables $(x,y)\mapsto (x,-y)$. Consider the perturbed displacement function 
\[
	\Delta(x,b)=\delta\bigl(\varphi^+(x-b)+b-\varphi^-(x)\bigr),
\]
and observe that it is of class $C^k$ in a neighborhood of the origin. Observe also that
\begin{equation}\label{9}
	\partial_x\Delta(0,0)=V_1, \quad\partial_b\Delta(0,0)=\delta(1-\alpha_1^+).
\end{equation}
The proof will be divided in two cases, depending on whether $\varphi^\pm$ are at least of class $C^2$ or not.
	
If $\varphi^\pm$ are of class $C^1$ but not of class $C^2$, by hypothesis we have $V_1\neq0$ and thus the proof follows as in Proposition~\ref{M2}, with the difference that this time the solution $x(b)$ is, from the usual Implicit Function Theorem, unique and of class $C^1$ in a neighborhood of $b=0$. It is only remaining to prove that $x(b)$ is indeed a hyperbolic limit cycle. To this end, observe that
\[
	\lim\limits_{b\to0^+}\partial_x\Delta(x(b),b)=V_1\neq0.
\]
Hence, the periodic orbit associated to $x(b)$ is a hyperbolic limit cycle whose stability is given by the sign of $V_1$.
	
Suppose now that $\varphi^\pm$ are at least of class $C^2$. If $N=1$, then the proof follows by the previous case. Thus, suppose also $N\geqslant2$. From~\eqref{35} we have,
\begin{equation}\label{13}
\begin{array}{rl}
	\Delta(x,b) =& \Delta(x,0)+\partial_b\Delta(x,0)b+O(b^2) \vspace{0.2cm} \\
	=& \Delta_0(x)+\bigl(\partial_b\Delta(0,0)+O(x)\bigr)b+O(b^2) \vspace{0.2cm} \\
	=& \Delta_0(x)+\partial_b\Delta(0,0)b+bO(x)+O(b^2) \vspace{0.2cm} \\
	=& V_Nx^N+o(x^N)+\delta(1-\alpha_1^+)b+bO(x)+O(b^2) \vspace{0.2cm} \\
	=& V_Nx^N+\delta(1-\alpha_1^+)b+bO(x)+O(b^2)+o(x^N).
\end{array}
\end{equation}
Consider the function
\[
	\Omega(y,\beta):=\frac{\Delta(\beta y,\beta^N)}{\beta^N},
\]
and observe from~\eqref{13} that
\begin{equation}\label{14}
	\Omega(y,\beta)=V_Ny^N+\delta(1-\alpha_1^+)+O(\beta y)+O(\beta^N)+\frac{1}{\beta^N}o(\beta^Ny^N).
\end{equation}
Let $\Omega_{\beta_0}(y):=\Omega(y,\beta_0)$ and observe that $\Omega_{\beta_0}(y)$ is of class $C^1$ for each $\beta_0\neq0$. Moreover, note that $\Omega_0$ is well defined and given by,
\[
	\Omega_0(y)=V_Ny^N+\delta(1-\alpha_1^+).
\]
In particular, it is also of class $C^1$. Notice that
\[
	y_0:=\sqrt[N]{-\delta\frac{1-\alpha_1^+}{V_N}}=\sqrt[N]{\frac{1-\alpha_1^+}{|V_N|}}>0
\] 
is the unique real solution of $\Omega_0(y)=0$. Hence, it follows from the \emph{Continuous Implicit Function Theorem} (see Theorem~\ref{IFT2}$(c)$) that there is a unique function $y(\beta)$ such that $y(0)=y_0$ and $\Omega(y(\beta),\beta)\equiv0$. Moreover, $y(\beta)$ is continuous in a neighborhood of $\beta=0$. Back to our original coordinates, we set $x=\beta y(\beta)$ and $b=\beta^N$. From $b>0$, we have $\beta=\sqrt[n]{b}$ and thus,
\begin{equation}\label{20}
	x(b):=\beta y(\beta)=\sqrt[N]{ b}\bigl(y_0+o(1)\bigr)=y_0\sqrt[N]{b}+o\bigl(\sqrt[N]{b}\,\,\bigr).
\end{equation}
From $N\geqslant2$ we have $x(b)>b$ for $b>0$ small enough and thus we conclude that the solution associated to $x(b)$ is indeed a periodic orbit of crossing type. Observe now that,
\[
	\lim\limits_{b\to0^+}\frac{1}{ b^{\frac{N-1}{N}}}\partial_x\Delta(x(b),b)=N V_Ny_0^{N-1}.
\]
Hence, the periodic orbit associated to $x(b)$ is a hyperbolic limit cycle whose stability is given by the sign of $V_N$. The proof that there is no periodic orbits for $b<0$ follows similarly to Proposition~\ref{M2}. 

We now prove formula~\eqref{12x}. To this end, observe that if $N<k$ (in particular, if $k=\infty)$, then we can replace $o(x^N)$ at the last line of~\eqref{13} by $O(x^{N+1})$. This implies that~\eqref{14} now writes as
\begin{equation}\label{36}
	\Omega(y,\beta)=V_Ny^N+\delta(1-\alpha_1^+)+O(\beta y)+O(\beta^N)+O(\beta y^{N+1}).
\end{equation}
Hence $\Omega(y,\beta)$ is of class $C^1$ in a neighborhood of the origin and thus we conclude from usual Implicit Function Theorem that the solution $y(\beta)$ is of class $C^1$ in a neighborhood of $\beta=0$. In particular, we can write
\begin{equation}\label{21}
	y(\beta)=y_0+y_1\beta+o(\beta),
\end{equation}
for some $y_1\in\mathbb{R}$. Replacing~\eqref{21} at~\eqref{20} implies 
\[
	x(b)=y_0\sqrt[N]{b}+O\left(\sqrt[N]{b}^2\right).
\]
In general it is not possible to have a formula for $y_1$ in~\eqref{21} since it depends on the value of $\partial_\beta\Omega(y_0,0)$, which in turn depends on the remaining terms at~\eqref{36}. 
\end{proof}

\begin{proof}[\textbf{Proof of Theorem~\ref{M4}}]
To simplify the proof of Theorem~\ref{M4}, we shall divide it in two cases.

\smallskip

\noindent {\bf Case \boldmath{$(r^+-1)(r^--1)\geqslant0$}:} Without loss of generality, we can suppose $\mu=1$, $r^\pm\geqslant1$, and $r^+\leqslant r^-$. Indeed, by applying the change of variables $(x,y,t)\mapsto(-x,y,-t)$, we obtain a new piecewise vector field $\mathcal{Z}$ that also satisfies~\ref{H1} and have $\zeta^\pm:=(\varphi^\pm)^{-1}$ as half-return maps (see Figure~\ref{Fig5}, for a schematic).
\begin{figure}[ht]
	\begin{center}
		\begin{minipage}{4cm}
			\begin{center} 
				\begin{overpic}[width=3.5cm]{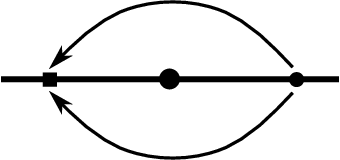} 
					\put(70,45){$\varphi^+$}
					\put(70,-1){$\varphi^-$}
				\end{overpic}
					
				$(a)$
			\end{center}
		\end{minipage}
		\begin{minipage}{4cm}
			\begin{center} 
				\begin{overpic}[width=3.5cm]{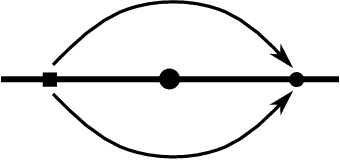} 
					\put(70,45){$\zeta^+:=(\varphi^+)^{-1}$}
					\put(70,-1){$\zeta^-:=(\varphi^-)^{-1}$}
				\end{overpic}
				
				$(b)$
			\end{center}
		\end{minipage}
		\begin{minipage}{4cm}
			\begin{center} 
				\begin{overpic}[width=3.5cm]{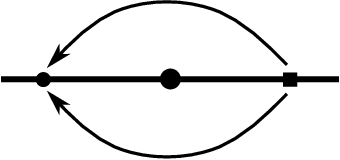} 
					\put(70,45){$\zeta^+$}
					\put(70,-1){$\zeta^-$}
				\end{overpic}
					
				$(c)$
			\end{center}
		\end{minipage}
	\end{center}
	\caption{Illustration of $(a)$ the half-return maps $\varphi^\pm$ of $Z$ and $(b)$ its inverses $\zeta^\pm$. Observe $(c)$ that $\zeta^\pm$ are the half-return maps of $\mathcal{Z}$}\label{Fig5}
\end{figure}
From postponed technical results (see Lemma~\ref{L1} and Remark~\ref{R8}) it follows that the half-return maps $\zeta^\pm$ of $\mathcal{Z}$ are also of Dulac-type and given by
\[
	\zeta^\pm(x)=\kappa_1^\pm x^{\rho^\pm}+\mathcal{S}(x),
\]
with $\kappa_1^\pm=-|\alpha_1^\pm|^{-\rho^\pm}$, $\rho^\pm=(r^\pm)^{-1}$, $\mathcal{S}\in\mathcal{F}^1_{\rho^\pm+\eta}$, and $\eta>0$. Moreover, observe that if $\mu=1$, then $r^+\leqslant r^-$. Indeed, if $r^-<r^+$, then it follows from~\eqref{17} that
\begin{equation}\label{32}
	\mu=-\delta\sigma=-\delta\operatorname{sign}(-\delta\alpha_1^-)=(-\delta)^2\operatorname{sign}(\alpha_1^-)=-1,
\end{equation}
where we recall that $\alpha_1^-<0$.   
    
Thus, we shall assume that $\mu=1$, $r^\pm\geqslant1$, and $r^+\leqslant r^-$. Hence, if we set
\[
	r_m=\min\{r^+,r^-\}, \quad r_M=\max\{r^+,r^-\},
\]
then $r_m=r^+$ and $r_M=r^-$. Therefore, we have from~\eqref{15} that the perturbed displacement function can be written as
\[
	\begin{array}{rl}
		\Delta(x,b) &= \delta\bigl(\varphi^+(x-b)+b-\varphi^-(x)\bigr) \vspace{0.2cm} \\
		&= \delta\bigl(\alpha_1^+(x-b)^{r_m}+b-\alpha_1^-x^{r_M}+\mathcal{R}^+(x-b)-\mathcal{R}^-(x)\bigr),
	\end{array}
\]
with $\mathcal{R}^\pm\in\mathcal{F}^1_{r_m+\ell}$ and $\ell>0$. Set the function
\[
	\Omega(y,\beta):=\frac{\Delta(|\beta| y,|\beta|^{r_m})}{|\beta|^{r_m}},
\]
and observe that
\begin{equation}\label{16}
	\begin{array}{l}
		\Omega(y,\beta)=\delta\biggl(\alpha_1^+\bigl(y-|\beta|^{r-1}\bigr)^{r_m}+1-\alpha_1^-|\beta|^{r_M-r_m}y^{r_M}\biggr) \vspace{0.2cm} \\
			
		\displaystyle \qquad\qquad\qquad\qquad +\frac{\delta}{|\beta|^{r_m}}\biggl(\mathcal{R}^+\bigl(|\beta|(y-|\beta|^{r_m-1})\bigr)+\mathcal{R}^-\bigl(|\beta| y\bigr)\biggr).
	\end{array}
\end{equation}
Suppose first $r_m>1$ and let
\begin{equation}\label{26}
	y_0:=\left(\frac{1}{|V_1|}\right)^{\frac{1}{r_m}}>0.
\end{equation}
Consider a neighborhood $Y:=(y_0-\varepsilon,y_0+\varepsilon)\subset\mathbb{R}_{>0}$ of $y_0$, with $\varepsilon>0$ small enough. From $\mathcal{R}^\pm\in\mathcal{F}^1_{r_m+\ell}$ (recall Definition~\ref{Def1}) and $\ell>0$, it follows that there are constants $C^\pm>0$ such that
\begin{equation}\label{37}
\begin{array}{c}
	\displaystyle \left|\mathcal{R}^+\bigl(|\beta|(y-|\beta|^{r_m-1})\bigr)\right|\leqslant C^+\bigl(|\beta|(y-|\beta|^{r_m-1})\bigr)^{r_m+\ell}, \vspace{0.2cm} \\
		
	\displaystyle \bigl|\mathcal{R}^-\bigl(|\beta| y\bigr)\bigr|\leqslant C^-\bigl(|\beta| y\bigr)^{r_m+\ell},
\end{array}
\end{equation}
provided $|\beta y|$ is small enough. Since $\ell>0$, it follows
\begin{equation}\label{25}
	\lim\limits_{\beta\to0}\frac{\delta}{|\beta|^{r_m}}\biggl(\mathcal{R}^+\bigl(|\beta|(y-|\beta|^{r_m-1})\bigr)+\mathcal{R}^-\bigl(|\beta| y\bigr)\biggr)=0
\end{equation}
for every $y\in Y$. Hence, there is a small enough neighborhood $B\subset\mathbb{R}$ of $\beta=0$ such that $\Omega$ is well defined and continuous at $U:=Y\times B$.
	
For each $\beta\in B$, let $\Omega_\beta\colon Y\to\mathbb{R}$ be given by $\Omega_\beta(y)=\Omega(y,\beta)$. From~\eqref{16} and~\eqref{25} we obtain
\begin{equation}\label{18}
	\displaystyle\Omega_0(y)=\left\{\begin{array}{ll}
					\delta\alpha_1^+y^{r_m}+\delta, &\text{if } r_m<r_M, \vspace{0.2cm} \\
					\delta(\alpha_1^+-\alpha_1^-)y^{r_m}+\delta, &\text{if } r_m=r_M.
				\end{array}\right.
\end{equation}
In either case, it follows from~\eqref{17} that~\eqref{18} can be written as
\begin{equation}\label{27}
	\Omega_0(y)=V_1y^{r_m}+\delta.
\end{equation}
In particular, it is of class $C^1$. Since $V_1\neq0$, it follows that $\Omega_0(y)=0$ has~\eqref{26} as a unique real solution and $\Omega_0'(y_0)=r_mV_1y_0^{r_m-1}\neq0$. Observe that $\Omega_\beta$ is of class $C^1$ at $Y$, for each $\beta\in B\setminus\{0\}$. Hence, similarly to the proof of Theorem~\ref{M3}, we can apply the Continuous Implicit Function Theorem (see Theorem~\ref{IFT2}$(c)$) to obtain, for $b>0$, the unique solution 
\[
	x(b)=y_0b^\frac{1}{r_m}+o\bigl(b^\frac{1}{r_m}\bigr).
\]
Since $r_m>1$, it follows that $1/r_m<1$ and thus $x(b)>b$ for $b>0$ small enough. Hence, there is crossing periodic orbit associated to $x(b)$. Observe now that,
\[
	\lim\limits_{b\to0^+}\frac{1}{ b^{\frac{r_m-1}{r_m}}}\partial_x\Delta(x(b),b)=r_mV_1y_0^{r_m-1}\neq0.
\]
Therefore, the periodic orbit associated to $x(b)$ is a hyperbolic limit cycle whose stability is given by the sign of $V_1$.
	
Suppose now $r_m=1$. We have from~\eqref{16} that
\[
	\Omega(y,\beta)=\delta\left(\alpha_1^+(y-1)+1-\alpha_1^-|\beta|^{r_M-1}y^{r_M}\right)  +\frac{\delta}{|\beta|}\biggl(\mathcal{R}^+\bigl(|\beta|(y-1)\bigr)+\mathcal{R}^-\bigl(|\beta| y\bigr)\biggr).
\]
Similarly to the case $r_m>1$, one can conclude that $\Omega$ is well defined and continuous in a neighborhood of $(y_1,0)$, with 
\[
	y_1:= \frac{1-\alpha_1^+}{|V_1|}>1
\]
being the unique real zero of $\Omega_0(y)=V_1y+\delta(1-\alpha_1^+)$, with the last inequality following similarly to~\eqref{29}. Moreover, it is not hard to see that $\Omega_\beta$ is of class $C^1$ for every $\beta\approx0$ and $\Omega_0'(y_1)=V_1\neq0$. This ensures that we can apply the Continuous Implicit Function Theorem (see Theorem~\ref{IFT2}$(c)$) and now the proof follows similarly to the previous case $r_m>1$.  The reasoning behind the multiple expressions of~\eqref{30} are more technical and thus postponed to Appendix~\ref{AppB}.

Finally, the proof that there is no periodic orbits for $b<0$ follows similarly to Proposition~\ref{M2}. This finishes the proof of this first case.

\smallskip

\noindent {\bf Case \boldmath{$(r^+-1)(r^--1)<0$}:} Observe that in this case either $r^+<1<r^-$, or $r^-<1<r^+$. Similarly to the proof of the first case, recall that we can assume $\mu=1$, $r_m=r^+$, and $r_M=r^-$, where $r_m=\min\{r^+,r^-\}$ and $r_M=\max\{r^+,r^-\}$. It follows from~\eqref{15} that the perturbed displacement function can be written as
\[
	\begin{array}{rl}
		\Delta(x,b) &= \delta\bigl(\varphi^+(x-b)+b-\varphi^-(x)\bigr) \vspace{0.2cm} \\
		&= \delta\bigl(\alpha_1^+(x-b)^{r_m}+b-\alpha_1^-x^{r_M}+\mathcal{R}^+(x-b)-\mathcal{R}^-(x)\bigr),
	\end{array}
\]
with $\mathcal{R}^\pm\in\mathcal{F}^1_{r_m+\ell}$ and $\ell>0$. Set the function
\[
	\Omega(y,\beta):=\frac{\Delta(|\beta| y,|\beta|)}{|\beta|^{r_m}},
\]
and observe that
\[
	\begin{array}{l}
		\Omega(y,\beta)=\delta\biggl(\alpha_1^+(y-1)^{r_m}+|\beta|^{1-r_m}-\alpha_1^-|\beta|^{r_M-r_m}y^{r_M}\biggr) \vspace{0.2cm} \\
		
		\displaystyle \qquad\qquad\qquad\qquad\qquad +\frac{\delta}{|\beta|^{r_m}}\biggl(\mathcal{R}^+\bigl(|\beta|(y-1)\bigr)+\mathcal{R}^-\bigl(|\beta| y\bigr)\biggr).
	\end{array}
\]
Let $U\subset\mathbb{R}^2$ be a small enough neighborhood of the point $(1,0)$. It follows similarly to~\eqref{37}, in addition with $r_m<1<r_M$, that $\Omega$ is well defined and continuous at $U\cap M_1$, where
\[
	M_1:=\{(y,\beta)\in\mathbb{R}^2\colon y\geqslant1\}.
\]
Let $V\subset\mathbb{R}^2$ be a small enough neighborhood of the origin and consider the set
\[
	M_0:=\{(z,\beta)\in\mathbb{R}^2\colon z\geqslant0\}.
\]
Consider the function $\Lambda(z,\beta):=\Omega(z^\frac{1}{r_m}+1,\beta)$. Observe that it is well defined and continuous at $V\cap M_0$, and given by
\[
\begin{array}{l}
	\Lambda(z,\beta)=\delta\biggl(\alpha_1^+z+|\beta|^{1-r_m}-\alpha_1^-|\beta|^{r_M-r_m}\bigl(z^\frac{1}{r_m}+1\bigr)^{r_M}\biggr) \vspace{0.2cm} \\
	
	\displaystyle \qquad\qquad\qquad\qquad\qquad +\frac{\delta}{|\beta|^{r_m}}\biggl(\mathcal{R}^+\bigl(|\beta|z^\frac{1}{r_m}\bigr)+\mathcal{R}^-\bigl(|\beta|(z^\frac{1}{r_m}+1)\bigr)\biggr).
\end{array}
\]
Notice also that $\Lambda|_{V\cap M_0}$ is equivalent to $\Omega|_{U\cap M_1}$. Observe that $\Lambda$ can be continuously extended to $V$ by defining $z^\frac{1}{r_m}:=0$ for $z\leqslant 0$.

For each $\beta\approx0$ let $\Lambda_\beta(z):=\Lambda(z;\beta)$. It follows similarly to~\eqref{37}, in addition with $r_m<1<r_M$, that $\Lambda_0(z)=V_1z$. In particular, it is of class $C^1$ and $\Lambda_0'(0)=V_1\neq0$. Furthermore, it is not hard to see that $\Lambda_\beta$ is also of class $C^1$, for each $\beta\approx0$. Therefore, it follows from the Continuous Implicit Function Theorem (see Theorem~\ref{IFT2}$(c)$) that there is a unique function $z(\beta)$ such that $z(0)=0$ and $\Lambda(z(\beta),\beta)\equiv0$. Moreover, any such function is continuous in a neighborhood of $\beta=0$ and thus can be written as $z(\beta)=0+o(1)$. 

Unlike the previous theorems, here at first glance there is nothing preventing $z(\beta)\leqslant0$ to happen for every $\beta\approx0$, with equality holding only at $\beta=0$. Hence, such solutions $z(\beta)$ does not necessary have a counterpart in the original coordinate system. In particular, it is not necessarily associated with a crossing periodic orbit of $Z_b$.

Nevertheless, similarly to~\eqref{20}, they do imply that \emph{if} a crossing periodic orbit exist, then it pass through a point $(x(b),0)$ characterized by
\begin{equation}\label{50}
	x(b)=b+o(b),
\end{equation}
with $b>0$. However, it follows from Theorem~\ref{M1} that at least one periodic orbit exist for each $b>0$. Hence, there is a crossing periodic orbit associated with~\eqref{50}. From the uniqueness of~\eqref{50} we conclude that such periodic orbit is a limit cycle. The fact that it is hyperbolic and the stability is given by the sign of $V_1$ follows similarly to the proof of Theorem~\ref{M4}. Finally, the proof that there is no periodic orbits for $b<0$ also follows similarly to Proposition~\ref{M2}, with the difference that now we have
\[
	\lim\limits_{(x,b)\to0}\partial_b\Delta(x,b)=
	\left\{
		\begin{array}{ll}
			+\infty, &\text{if } \delta=1, \vspace{0.2cm} \\
			-\infty, &\text{if } \delta=-1,
		\end{array}
	\right.
\]
and thus the claim follows by the Mean Value Theorem. 

The formula for case $\mu=-1$ in~\eqref{43} is obtained by applying the above proof after the change of variables $(x,y)\mapsto (x,-y)$, obtaining $\overline{x}(\overline{b})=\overline{b}+o(\overline{b})$, and then noticing that $x(b):=\overline{x}(-b)+b=o(b)$.
\end{proof}

\section{Half-monodromic components}\label{Sec3}

We consider essentially two distinct cases in which a half-return map is well defined: the half-return map is induced either by local or global dynamics. The \emph{local} case includes monodromic singularities (such as centers and foci) but also folds and cusps, while the \emph{global} case encompasses, for instance, the situation where the half-return map is induced by the flow near a periodic orbit tangent to the switching set. 

In what follows, we derive the expressions of the half-return maps for some particular scenarios. Additionally, we also obtain the associated \email{flight time} which will be necessary in order to retrieve the period of the limit cycle unfolding from the pseudo-Hopf bifurcation.

In order to deal with the local case, we divert briefly from the discontinuous vector fields and consider the differential systems of the form
\begin{equation}\label{eq:HM1}
	\dot{x}=P(x,y), \quad	\dot{y}=Q(x,y),
\end{equation}
where $P,Q$ are analytic. We introduce the quasi-homogeneous blow-up
\[
	x=r^p\cos\theta,\; \quad y=r^q\sin\theta,
\]
with $(p,q)\in\mathbb{N}^2$ that transforms system \eqref{eq:HM1} into
\begin{equation}\label{eq:HMpolar}
	\begin{aligned}
		\dot{r}&=\dfrac{r^q\cos\theta\, P(r^p\cos\theta,r^q\sin\theta)+r^p\sin\theta\, Q(r^p\cos\theta,r^q\sin\theta)}{r^{p+q-1}(p\cos^2\theta+q\sin^2\theta)},\\
		\dot{\theta}&=\dfrac{p\,r^p\cos\theta\, Q(r^p\cos\theta,r^q\sin\theta)-q\,r^q\sin\theta\, P(r^p\cos\theta,r^q\sin\theta)}{r^{p+q}(p\cos^2\theta+q\sin^2\theta)}.
	\end{aligned}
\end{equation}
It is sufficient to consider the above system in the strip $(r,\theta)\in\mathbb{R}\times[0,\pi]$ to investigate whether the half return map is well defined or not. The following result \cite[Lemma 3]{GasCod1} is a powerful tool in our study.

\begin{lemma}\label{Lemma:Gasull}
	Let $A(r,\theta)$ and $B(r,\theta)$ be analytic functions on $\mathbb{R}\times[0,\pi]$ with $A(0,\theta)\equiv 0$ and $B(0,\theta)>0$. Consider the differential system
	\begin{equation}\label{eq:HMGasull}
		\begin{aligned}
			\dot{r}&=A(r,\theta)r^m,\\
			\dot{\theta}&=B(r,\theta)r^m,
		\end{aligned}
	\end{equation}
	where $m\in\mathbb{Z}$. Associated to \eqref{eq:HMGasull}, consider
	\begin{equation}\label{eq:HMGasull2}
		\dfrac{dr}{d\theta}=\dfrac{A(r,\theta)}{B(r,\theta)}=:\sum_{i\geqslant 1}C_i(\theta)r^i.
	\end{equation}
	For sufficiently small $\rho$, let $r(\theta,\rho)$ be the solution of \eqref{eq:HMGasull2} satisfying $r(0,\rho)=\rho$ and denote by $T(\rho)$ the time it takes for the solution of \eqref{eq:HMGasull} starting at $(r,\theta)=(\rho,0)$ to arrive at $\theta=\pi$. Then $r(\theta,\rho)$ is analytic at $\rho=0$ verifying $r(\theta,\rho)=\sum_{i\geqslant 1}r_i(\theta)\rho^i$, where
	\begin{equation}\label{eq:HMGasullcoef}
		\begin{aligned}
			&r_1(\theta)=\exp\left(\int_{0}^{\theta}C_1(\phi)d\phi\right),\; r_2(\theta)=r_1(\theta)\int_{0}^{\theta}C_2(\phi)r_1(\phi)d\phi, \vspace{0.2cm} \\
			&r_3(\theta)=r_2^2(\theta)/r_1(\theta)+r_1(\theta)\int_{0}^{\theta}C_3(\phi)r_1^2(\phi)d\phi.
		\end{aligned}	
	\end{equation}	
	In addition, $T(\rho)=\hat{T}(\rho)/\rho^m$ where $\hat{T}$ is an analytic function at $\rho=0$. Finally, setting $B(r,\theta)=\sum_{i\geqslant 0}B_i(\theta)r^i$, then
	\begin{equation}\label{eq:HMGasulltime}
		\begin{aligned}
			&\hat{T}(0)=\int_{0}^{\pi}\dfrac{d\theta}{r_1^m(\theta) B_0(\theta)}, \vspace{0.2cm} \\
			&\partial_x\hat{T}(0)=-\int_{0}^{\pi}\left(\frac{r_1(\theta)B_1(\theta)}{B_0(\theta)}+m\frac{r_2(\theta)}{r_1(\theta)}\right)\frac{d\theta}{r_1^m(\theta)B_0(\theta)}.
		\end{aligned}
	\end{equation}
\end{lemma}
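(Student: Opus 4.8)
The plan is to reduce the planar system \eqref{eq:HMGasull} to the scalar non-autonomous equation \eqref{eq:HMGasull2} and then analyze the radial component and the flight time separately. First I would set up the reduction: since $A$ is analytic with $A(0,\theta)\equiv0$, we may factor $A(r,\theta)=r\widetilde A(r,\theta)$ with $\widetilde A$ analytic near $\{0\}\times[0,\pi]$, and since $B(0,\theta)>0$ with $[0,\pi]$ compact, $B$ is bounded below by a positive constant on a neighborhood of $\{0\}\times[0,\pi]$, so $A/B$ is analytic there and vanishes identically on $r=0$. Expanding in $r$ gives the series $\sum_{i\geqslant1}C_i(\theta)r^i$ with each $C_i$ analytic on $[0,\pi]$, converging for $|r|$ below a bound uniform in $\theta$. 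For $\rho>0$ small the orbit of \eqref{eq:HMGasull} through $(\rho,0)$ has $\dot\theta=B r^m>0$ as long as $r>0$, so $\theta$ parametrizes that orbit and it coincides with the graph of the solution $r(\theta,\rho)$ of \eqref{eq:HMGasull2}. Invoking the classical theorem on analytic dependence of solutions of analytic differential equations on initial data, $r(\theta,\rho)$ is analytic jointly on $[0,\pi]\times(-\varepsilon,\varepsilon)$; since $\rho=0$ yields the solution $r\equiv0$, the $\rho$-expansion has vanishing constant term, i.e. $r(\theta,\rho)=\sum_{i\geqslant1}r_i(\theta)\rho^i$.

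Next I would derive \eqref{eq:HMGasullcoef}. Substituting the series into \eqref{eq:HMGasull2} and equating coefficients of $\rho^k$ produces a hierarchy of linear initial value problems: $r_1'=C_1r_1$ with $r_1(0)=1$, then $r_2'=C_1r_2+C_2r_1^2$ with $r_2(0)=0$, then $r_3'=C_1r_3+2C_2r_1r_2+C_3r_1^3$ with $r_3(0)=0$. The first gives $r_1=\exp\bigl(\int_0^\theta C_1\bigr)$; solving the second with integrating factor $1/r_1$ gives $(r_2/r_1)'=C_2r_1$, hence the stated formula for $r_2$. For $r_3$, rather than integrating twice, I would verify directly that $u:=r_2^2/r_1+r_1\int_0^\theta C_3r_1^2$ satisfies $u(0)=0$ and, using $r_1'=C_1r_1$ and $r_2'=C_1r_2+C_2r_1^2$, that $u'=C_1u+2C_2r_1r_2+C_3r_1^3$; uniqueness of the linear problem then forces $u=r_3$.

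Finally I would treat the time. Along the orbit $dt=d\theta/\dot\theta$, so
\begin{align*}
	T(\rho)&=\int_0^\pi\frac{d\theta}{B(r(\theta,\rho),\theta)\,r(\theta,\rho)^m}\\
	&=\frac{1}{\rho^m}\int_0^\pi\frac{d\theta}{B(r(\theta,\rho),\theta)\,\bigl(r(\theta,\rho)/\rho\bigr)^m},
\end{align*}
and the last integral defines $\hat T(\rho)$. Because $r(\theta,\rho)/\rho=r_1(\theta)+r_2(\theta)\rho+\cdots$ is analytic in $\rho$ with value $r_1(\theta)>0$ at $\rho=0$, while $B$ composed with the analytic $r$ is analytic and bounded away from $0$, the integrand is analytic in $\rho$ with continuous $\theta$-coefficients, so $\hat T$ is analytic at $\rho=0$. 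Evaluating at $\rho=0$ gives $\hat T(0)=\int_0^\pi d\theta/(r_1^mB_0)$; differentiating under the integral sign at $\rho=0$ — using $\partial_rB(0,\theta)=B_1(\theta)$, $\partial_\rho r|_{\rho=0}=r_1$, $\partial_\rho(r/\rho)|_{\rho=0}=r_2$, and the chain and product rules — produces precisely the expression for $\partial_\rho\hat T(0)$ in \eqref{eq:HMGasulltime}.

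The main obstacle is the analyticity bookkeeping rather than any single computation: one must justify that solutions depend analytically on $\rho$ uniformly for $\theta\in[0,\pi]$, and that the formal series in $r$ and in $\rho$ converge uniformly in $\theta$. Both hinge on the same observation — compactness of $[0,\pi]$ together with $B(0,\theta)>0$ forces a uniform positive lower bound for $B$ and hence a uniform domain of analyticity — after which the classical Cauchy-majorant proof of analytic dependence and differentiation under the integral sign apply verbatim. Note that everything works for every $m\in\mathbb{Z}$, since $r^m$ cancels in passing to \eqref{eq:HMGasull2} and is simply extracted as $\rho^m$ in $T(\rho)$.
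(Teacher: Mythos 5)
The paper does not prove this lemma: it is cited verbatim as \cite[Lemma~3]{GasCod1} (Gasull, Villadelprat and Mañosa), so there is no in-paper argument to compare against. Your reconstruction is correct and follows the route one would expect to find in that reference: factor $A=r\widetilde A$ and use $B(0,\theta)>0$ on compact $[0,\pi]$ to obtain a uniform domain of analyticity for $A/B$; invoke analytic dependence on initial conditions to get $r(\theta,\rho)=\sum_{i\geqslant1}r_i(\theta)\rho^i$; substitute the series into \eqref{eq:HMGasull2} to obtain the variational hierarchy $r_1'=C_1r_1$, $r_2'=C_1r_2+C_2r_1^2$, $r_3'=C_1r_3+2C_2r_1r_2+C_3r_1^3$ with the stated initial data, and solve; and finally write $T(\rho)=\int_0^\pi d\theta/\bigl(B(r(\theta,\rho),\theta)\,r(\theta,\rho)^m\bigr)$, extract $\rho^{-m}$ via $r/\rho=r_1+r_2\rho+\cdots$, and differentiate under the integral sign. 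Your direct verification that $u=r_2^2/r_1+r_1\int_0^\theta C_3 r_1^2$ solves the $r_3$-equation is a clean shortcut, and the chain-rule computation of $\partial_\rho\hat T(0)$ reproduces \eqref{eq:HMGasulltime} exactly (the symbol $\partial_x$ in the paper's statement is a typo for $\partial_\rho$). No gaps.
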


Now, consider vector field \eqref{eq:HM1} and suppose there exists a suitable choice of weights $(p,q)$ such that system \eqref{eq:HMpolar} satisfies the hypothesis of Lemma \ref{Lemma:Gasull} with the common exponent $m$ in equation \eqref{eq:HMGasull}. For sufficiently small $x>0$, let $(x,0)\in\Sigma$. This point correspond to the point $(r,\theta)=(x^{1/p},0)$ after the quasi-homogeneous blow-up. The half-return map is then given by \[\varphi(x)=-r^p(\pi,x^{1/p}).\] 
We have that, by Lemma \ref{Lemma:Gasull}, $r(\pi,\rho)$ is analytic with $r(\pi,0)=0$. Since $r_1(\pi)\neq 0$, it is clear that
\begin{equation}\label{eq:HRgeneric}
	\varphi(x)=-\left(\sum_{i\geqslant 1}r_i(\pi)x^{i/p}\right)^p=-r_1(\pi)^px+o(x).
\end{equation}
From Lemma \ref{Lemma:Gasull}, we can also obtain the flight time $\tau(x)$ of a point $(x,0)\in\Sigma$:
\begin{equation}\label{eq:FlightTimegeneric}
	\tau(x)=\frac{\hat{T}(x^{1/p})}{x^{m/p}}=\sum_{i\geqslant0}\hat{T}_i\, x^{(i-m)/p}=\hat{T}_0 x^{-m/p}+O\big(x^{(1-m)/p}\big).
\end{equation}
Notice that under the hypotheses of Lemma \ref{Lemma:Gasull}, $\hat{T}_0=\hat{T}(0)>0$.

We remark that the weights $(p,q)$ and the exponent $m$ have important implications on the regularity of the half-return map and the flight time. More precisely, for $p=1$ the half-return map $\varphi(x)$ is analytic and for $m\leqslant 0$, the flight time $\tau(x)$ is analytic.
\smallskip

In the following results, we proceed to obtain the expressions of the half-return map $\varphi^{+}(x)$ and the associated flight time $\tau^{+}(x)$. The reader should note that the expressions for $\varphi^{-}(x)$ and $\tau^{-}(x)$ can be retrieved after applying the transformation $(x,y)\mapsto (x,-y)$.

\subsection{Folds} Consider the analytic vector field $Z^{+}(x,y)=(P(x,y),Q(x,y))$ defined on $\Sigma^{+}$ such that the origin is an invisible contact of even multiplicity between $Z^+$ and $\Sigma$. Thus, 
\begin{equation}
P(x,y)=a_{0,0}+P_1(x,y),\quad Q(x,y)=b_{2k-1,0}x^{2k-1}+yQ_0(x,y),
\end{equation}
where $k\geqslant 1$, $P_1(0,0)=0$ and $a_{0,0}b_{2k-1,0}<0$. Setting $(p,q)=(1,2k)$, after the quasi-homogeneous blow-up, we obtain the corresponding system \eqref{eq:HMpolar} given by:
\begin{equation}
	\begin{aligned}
		\dot{r}&=\dfrac{a_{0,0}\cos\theta+b_{2k-1,0}\sin\theta\cos^{2k-1}\theta}{(\cos^2\theta+2k\sin^2\theta)}+O(r),\\
		\dot{\theta}&=\dfrac{b_{2k-1,0}\cos^{2k}\theta-2k a_{0,0}\sin\theta}{(\cos^2\theta+2k\sin^2\theta)}r^{-1}+O(1).
	\end{aligned}
\end{equation}
Reversing time if necessary, we assume $a_{0,0}<0$. Then, the above system satisfies the hypotheses of Lemma \ref{Lemma:Gasull} with $m=-1$. Since $p=1$, we have that the associated half-return map $\varphi^{+}(x)$ is analytic. The expressions can be readly derived by the formulas in Lemma \ref{Lemma:Gasull}. Hence, we have the following result.

\begin{corollary}\label{corol:1fold}
	Let $Z^+(x,y)=(P(x,y),Q(x,y))$ be an analytic vector field defined on $\Sigma^{+}$ such that the origin is an invisible contact of even multiplicity between $Z^+$ and $\Sigma$. Then the half-return map and the flight time associated to $\Sigma$ are analytic functions given by
	\begin{equation}\label{eq:HRfold}
		\varphi^{+}(x)=-x-\sum_{i\geqslant 2}\alpha_i x^i,\quad 	\tau^{+}(x)=\hat{T}_0 x+\sum_{i\geqslant 1}\hat{T}_{i}x^{i+1},
	\end{equation}
with $\hat{T}_0P(0,0)<0$.
\end{corollary}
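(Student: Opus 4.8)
The plan is to apply Lemma~\ref{Lemma:Gasull} directly to the quasi-homogeneous blow-up of $Z^+$ with weights $(p,q)=(1,2k)$, and then translate the conclusions back through $\varphi^+(x)=-r^p(\pi,x^{1/p})=-r(\pi,x)$ and $\tau^+(x)=\hat T(x^{1/p})/x^{m/p}=\hat T(x)/x^{-1}=x\,\hat T(x)$, exactly as in the general discussion culminating in \eqref{eq:HRgeneric} and \eqref{eq:FlightTimegeneric}. First I would record that, after reversing time if necessary so that $a_{0,0}<0$, the blown-up system has the form \eqref{eq:HMGasull} with $m=-1$, $A(0,\theta)\equiv 0$, and $B(0,\theta)=\dfrac{b_{2k-1,0}\cos^{2k}\theta-2k\,a_{0,0}\sin\theta}{\cos^2\theta+2k\sin^2\theta}$; one checks $B(0,\theta)>0$ on $[0,\pi]$ using $a_{0,0}<0$ and $a_{0,0}b_{2k-1,0}<0$ (so $b_{2k-1,0}>0$), since the numerator is a sum of two nonnegative terms that do not vanish simultaneously. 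This is the hypothesis verification needed to invoke Lemma~\ref{Lemma:Gasull}.

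Next, since $p=1$, equation \eqref{eq:HRgeneric} specializes to $\varphi^+(x)=-r(\pi,x)=-\sum_{i\geqslant 1}r_i(\pi)x^i$, which is analytic; the claimed normalization $\varphi^+(x)=-x-\sum_{i\geqslant 2}\alpha_i x^i$ then amounts to showing $r_1(\pi)=1$. By \eqref{eq:HMGasullcoef}, $r_1(\theta)=\exp\!\bigl(\int_0^\theta C_1(\phi)\,d\phi\bigr)$ where $C_1(\theta)$ is the coefficient of $r^1$ in $A(r,\theta)/B(r,\theta)$. Here I would use the invisibility of the fold: the orbit of $Z^+$ through $(x,0)$ stays in $\Sigma^+$ and returns to $\Sigma$, so the half-return map is an orientation-reversing involution-like map fixing $0$, forcing the linear coefficient of $\varphi^+$ to be $-1$; equivalently, a direct computation of $\int_0^\pi C_1(\phi)\,d\phi$ from the explicit $A,B$ shows this integral vanishes. (The cleanest argument is that $C_1(\theta)$ is built from the $O(r)$ terms of $\dot r$ divided by the leading $\dot\theta$, and the symmetry $\theta\mapsto\pi-\theta$ of the fold's blow-up makes the integrand integrate to zero.) Setting $\alpha_i:=r_i(\pi)$ for $i\geqslant 2$ gives the stated form.

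For the flight time, \eqref{eq:FlightTimegeneric} with $m=-1$, $p=1$ gives $\tau^+(x)=x\,\hat T(x)=\sum_{i\geqslant 0}\hat T_i x^{i+1}=\hat T_0 x+\sum_{i\geqslant 1}\hat T_i x^{i+1}$, which is analytic and has the claimed shape, so it only remains to identify the sign of $\hat T_0=\hat T(0)$. By \eqref{eq:HMGasulltime}, $\hat T(0)=\int_0^\pi \dfrac{d\theta}{r_1^{-1}(\theta)B_0(\theta)}=\int_0^\pi \dfrac{r_1(\theta)\,d\theta}{B_0(\theta)}$; since $r_1(\theta)>0$ and $B_0(\theta)=B(0,\theta)>0$, this is positive. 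Then $\hat T_0 P(0,0)=\hat T_0 a_{0,0}<0$ when $a_{0,0}<0$; and in the case where we reversed time to achieve $a_{0,0}<0$, both $P(0,0)$ and the travel time flip sign together, so $\hat T_0 P(0,0)<0$ holds in all cases. I expect the main obstacle to be the clean justification that $r_1(\pi)=1$ (equivalently $\int_0^\pi C_1=0$): one must either exploit the $\theta\mapsto\pi-\theta$ symmetry of the invisible fold's blown-up field or carry out the explicit integral, and care is needed because the lower-order terms $P_1,\,yQ_0$ contribute to $C_1(\theta)$ in principle — the point is that their contribution to the \emph{definite} integral over $[0,\pi]$ cancels, which is where the geometric meaning of ``invisible fold of even multiplicity'' is essential.
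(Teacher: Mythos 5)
Your proof is correct and follows essentially the same route as the paper: blow up with weights $(p,q)=(1,2k)$, verify the hypotheses of Lemma~\ref{Lemma:Gasull} with $m=-1$, and read off the shapes of $\varphi^+$ and $\tau^+$ from \eqref{eq:HRgeneric} and \eqref{eq:FlightTimegeneric}. The paper is terser at the two points you flag as delicate --- it simply cites \cite{NovLSilvaJDE2021} for $j^1\varphi^+(x)=-x$ and \cite{NovLSilvaPAMS2022} for $\tau^+(0)=0$ --- whereas you give self-contained arguments, which is a legitimate and arguably preferable alternative. One small correction to your own hedging: the worry that $P_1$ and $yQ_0$ ``contribute to $C_1(\theta)$ in principle'' is unfounded. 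Since $C_1(\theta)=A_1(\theta)/B_0(\theta)$, with $A_1$ the coefficient of $r^0$ in $\dot r$ and $B_0$ the coefficient of $r^{-1}$ in $\dot\theta$, and since after the blow-up both $P_1(r\cos\theta,r^{2k}\sin\theta)$ and $r^{2k}\sin\theta\,Q_0$ enter only the $O(r)$ remainders, these lower-order terms do not appear in $C_1$ at all. Consequently the $\theta\mapsto\pi-\theta$ antisymmetry is clean (odd numerator over even denominator), and $\int_0^\pi C_1=0$, hence $r_1(\pi)=1$, with no hidden cancellation to check. Your sign bookkeeping for $\hat T_0 P(0,0)<0$ under time reversal is also correct.
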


The linear term of $\varphi^{+}(x)$ was known to be $j^1\varphi(x)=-x$ (see \cite{NovLSilvaJDE2021}). The fact that $\tau^{+}(0)=0$ was proven in \cite{NovLSilvaPAMS2022}.

\subsection{Elementary center/focus} We now take on the scenario in which the vector field $Z^+(x,y)=(P(x,y),Q(x,y))$ is analytic and has an elementary monodromic equilibrium at the origin. In this case,
\begin{equation}\label{eq:FNElemfocus}
P(x,y)=a_{10}x+a_{01}y+P_2(x,y),\quad Q(x,y)=b_{10}x+b_{01}y+Q_2(x,y),
\end{equation}
where $j^1P(0)=j^1Q(0)=0$ and $(a_{10}-b_{01})^2+4a_{01}b_{10}<0$. For $(p,q)=(1,1)$, the corresponding system \eqref{eq:HMpolar} is written in the form \eqref{eq:HMGasull} with $m=0$ and
\[
	B_0(r,\theta)=b_{10}\cos\theta^2+(b_{01}-a_{10})\sin\theta\cos\theta-a_{01}\sin^2\theta.
\] 
Again, reversing time if necessary, we assume $b_{10}>0$. The hypothesis of Lemma \eqref{Lemma:Gasull} are then satisfied and we have the following result.

\begin{corollary}\label{corol:2focus}
	Let $Z^+(x,y)=(P(x,y),Q(x,y))$ be an analytic vector field having an elementary monodromic equilibrium at the origin given by \eqref{eq:FNElemfocus}. Then the half-return map and the flight time associated to $\Sigma$ are analytic functions given by
	\begin{equation}\label{eq:HREFocus}
		\varphi^{+}(x)=-\alpha_1 x-\sum_{i\geqslant 2}\alpha_i x^i,\quad 	\tau^{+}(x)=\hat{T}_0+\sum_{i\geqslant 1}\hat{T}_i x^i,
	\end{equation}
	where $\alpha_1=\exp\left(\dfrac{\pi(a_{10}+b_{01})}{\sqrt{-4b_{10}a_{01}-(a_{10}-b_{01})^2}}\right)$ and $b_{10}\hat{T}_0>0$.
\end{corollary}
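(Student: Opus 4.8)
The goal is to prove Corollary~\ref{corol:2focus}, which describes the half-return map and flight time for an analytic vector field with an elementary monodromic equilibrium at the origin. The setup already reduces the problem to an application of Lemma~\ref{Lemma:Gasull}: after the quasi-homogeneous blow-up with weights $(p,q)=(1,1)$, system~\eqref{eq:HMpolar} is brought into the form~\eqref{eq:HMGasull} with $m=0$, and (reversing time if necessary to ensure $b_{10}>0$) the hypotheses $A(0,\theta)\equiv 0$ and $B(0,\theta)>0$ are satisfied. So the plan is essentially to unwind the conclusions of Lemma~\ref{Lemma:Gasull} in this special case and read off the leading coefficients.

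First I would record that since $p=1$, the generic discussion following Lemma~\ref{Lemma:Gasull} (equations~\eqref{eq:HRgeneric} and~\eqref{eq:FlightTimegeneric}) applies directly: $\varphi^+(x)=-r_1(\pi)\,x+o(x)$ is analytic, and since $m=0$, the flight time $\tau^+(x)=\hat T(x)=\sum_{i\geqslant 0}\hat T_i x^i$ is analytic with $\hat T_0=\hat T(0)>0$ under the lemma's hypotheses. This immediately gives the shape of~\eqref{eq:HREFocus} with $\alpha_1=r_1(\pi)^p=r_1(\pi)$. Then I would compute $r_1(\pi)=\exp\left(\int_0^\pi C_1(\phi)\,d\phi\right)$ from~\eqref{eq:HMGasullcoef}, where $C_1(\theta)=B_1(\theta)/B_0(\theta)$ is obtained by expanding $\dot r/\dot\theta = A(r,\theta)/B(r,\theta)$ in powers of $r$. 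Concretely, $B_0(\theta)$ is the given quadratic form $b_{10}\cos^2\theta+(b_{01}-a_{10})\sin\theta\cos\theta-a_{01}\sin^2\theta$, and the numerator's linear-in-$r$ part produces, after dividing, a rational trigonometric integrand. The integral $\int_0^\pi C_1(\phi)\,d\phi$ is the classical one computed, e.g., in the smooth Hopf setting: substituting $u=\tan\theta$ or using the residue/partial-fractions trick, it evaluates to $\pi(a_{10}+b_{01})/\sqrt{-4b_{10}a_{01}-(a_{10}-b_{01})^2}$, which is exactly the claimed exponent. The sign condition $(a_{10}-b_{01})^2+4a_{01}b_{10}<0$ guarantees the discriminant under the square root is positive, so $\alpha_1$ is well-defined and positive.

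Finally I would note that $b_{10}\hat T_0>0$: since we normalized to $b_{10}>0$ precisely to make $B(0,\theta)>0$ in Lemma~\ref{Lemma:Gasull} (and if time was reversed, both $b_{10}$ and the flight time change sign together), the conclusion $\hat T_0=\int_0^\pi d\theta/B_0(\theta)>0$ in the normalized system translates to $b_{10}\hat T_0>0$ in the original one. The higher-order coefficients $\alpha_i$ ($i\geqslant 2$) and $\hat T_i$ ($i\geqslant 1$) are left unspecified, as the corollary only asserts the form of the expansion and the leading data; their existence and the analyticity of the whole series follow from the analyticity claims in Lemma~\ref{Lemma:Gasull} together with $p=1$.

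The routine-but-only-real-computation step is the evaluation of $\int_0^\pi C_1(\phi)\,d\phi$; everything else is bookkeeping on top of Lemma~\ref{Lemma:Gasull}. I do not expect a genuine obstacle here — the main care needed is tracking the effect of the possible time-reversal on the signs of $\alpha_1$ (which stays positive) and of $\hat T_0$ (whose sign is tied to that of $b_{10}$), so that the two sign statements in the corollary come out correctly regardless of the orientation of the original field.
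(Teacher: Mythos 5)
Your proposal matches the paper's own argument: both proceed by applying Lemma~\ref{Lemma:Gasull} after the $(p,q)=(1,1)$ blow-up (so $m=0$ and $p=1$, giving analyticity of both $\varphi^+$ and $\tau^+$), reading $\alpha_1=r_1(\pi)$ from \eqref{eq:HRgeneric}, and evaluating the standard integral $\int_0^\pi C_1(\phi)\,d\phi$ with $C_1=B_1/B_0$ to get the stated formula, with the time-reversal normalization $b_{10}>0$ accounting for the sign statement $b_{10}\hat T_0>0$. The only difference is cosmetic: you spell out the evaluation of the integral, whereas the paper simply cites it as well known.
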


The expression for $\alpha_1$ is well-known in the literature (see \cite{CollGasPro}, for instance).

\subsection{Nilpotent center/focus} Another suitable component is the vector field having a monodromic nilpotent singular point at the origin. In order to perform the computations, we assume that $Z^+(x,y)=(P(x,y),Q(x,y))$ is written in the normal form. 
\begin{equation}\label{eq:FNnilfocus}
	P(x,y)=-y+yP_1(x,y),\quad Q(x,y)=f(x)+y g(x) + y^2 Q_0(x,y),
\end{equation}
where $P_1(0,0)=0$, $f(x)=ax^{2n-1}+O(x^{2n})$, $g(x)=b x^\beta+O(x^{\beta+1})$ with $n\geqslant2$, $a>0$ and satisfying one of the following conditions:
\begin{itemize}
	\item[(i)] $\beta>n-1$ or $g(x)\equiv 0$;
	\item[(ii)] $\beta=n-1$ and $b^2-4an<0$.
\end{itemize}
These are the \emph{monodromy conditions} given in Andreev's Theorem~\cite{Andreev}. The suitable weights are $(p,q)=(1,n)$ which yield the following system
\begin{equation}
	\begin{aligned}
		\dot{r}&=\dfrac{(a\sin\theta\cos^{2n-1}\theta-\cos\theta\sin\theta)r^n +b\sin^2\theta\cos^\beta\theta\,r^{\beta+1}}{(\cos^2\theta+n\sin^2\theta)}+O(r^{n+1}),\\
		\dot{\theta}&=\dfrac{(a\cos^{2n}\theta+n\sin^2\theta)r^{n-1}+b\cos^{\beta+1}\sin\theta\,r^\beta}{(\cos^2\theta+n\sin^2\theta)}+O(r^n).
	\end{aligned}
\end{equation}
In both cases $(i)$ and $(ii)$, the above system satisfies the hypotheses of Lemma~\ref{Lemma:Gasull} with $m=n-1$. Again, $p=1$ implies that the half-return map is analytic. However, the flight time is not. 

\begin{corollary}\label{corol:3nilfocus}
	Let $Z^+(x,y)=(P(x,y),Q(x,y))$ be an analytic vector field having a nilpotent monodromic singularity at the origin, given in the form \eqref{eq:FNnilfocus}. Then the half-return map and flight time associated to $\Sigma$ are given by
	\begin{equation}\label{eq:HRNilpotentFocus}
		\varphi^{+}(x)=-\alpha_1 x-\sum_{i\geqslant 2}\alpha_i x^i,\quad 	\tau^{+}(x)=\hat{T}_0x^{1-n}+\sum_{i\geqslant 1}\hat{T}_i x^{i+1-n},
	\end{equation}
 where $\hat{T}_0>0$ and the coefficients $\alpha_i$ are the focal coefficients of the nilpotent focus, which can be obtained recursively. The expression of $\alpha_1$ is given by
	\[
		\alpha_1=\exp\left(\int_{0}^{\pi}\frac{\nu\sin^2\theta\cos^{n-1}\theta}{(a\cos^{2n}\theta+n\sin^2\theta)+\nu\cos^{n}\sin\theta}d\theta\right)
	\]
	where
	\[
	\nu=\left\{\begin{array}{c}
			0, \text{ if } \beta>n-1,\\
			b, \text{ if } \beta=n-1.
	\end{array} \right.
	\]
\end{corollary}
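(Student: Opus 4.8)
The plan is to apply Lemma~\ref{Lemma:Gasull} directly to the quasi-homogeneous blow-up of $Z^+$ with weights $(p,q)=(1,n)$ displayed above. First I would verify that its hypotheses hold for the corresponding system written in the form~\eqref{eq:HMGasull} with common exponent $m=n-1$: the relation $A(0,\theta)\equiv0$ is immediate, and $B(0,\theta)=B_0(\theta)>0$ on $[0,\pi]$ follows in case~$(i)$ from $a>0$ (so $a\cos^{2n}\theta+n\sin^2\theta>0$), and in case~$(ii)$ from the fact that $b^2-4an<0$ together with $a>0$ makes the binary quadratic form $au^2+buv+nv^2$ positive definite, hence $a\cos^{2n}\theta+n\sin^2\theta+b\cos^n\theta\sin\theta>0$ since $(\cos^n\theta,\sin\theta)\neq(0,0)$ on $[0,\pi]$. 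Because $p=1$, formula~\eqref{eq:HRgeneric} then gives that $\varphi^+$ is analytic with $\varphi^+(x)=-\sum_{i\geqslant1}r_i(\pi)x^i$; setting $\alpha_i:=r_i(\pi)$, the recursive formulas~\eqref{eq:HMGasullcoef} for the $r_i(\theta)$ show that these are exactly the focal coefficients of the nilpotent focus and are obtained recursively.

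It then remains to identify $\alpha_1=r_1(\pi)=\exp\bigl(\int_0^\pi C_1(\phi)\,d\phi\bigr)$. Reading $C_1=A_1/B_0$ off the blown-up equations one gets, uniformly in both cases with $\nu$ as in the statement,
\[
	C_1(\theta)=\frac{\bigl(a\sin\theta\cos^{2n-1}\theta-\sin\theta\cos\theta\bigr)+\nu\,\sin^2\theta\cos^{n-1}\theta}{\bigl(a\cos^{2n}\theta+n\sin^2\theta\bigr)+\nu\,\cos^n\theta\sin\theta},
\]
so $\int_0^\pi C_1$ splits as $\int_0^\pi\frac{\nu\sin^2\theta\cos^{n-1}\theta}{(a\cos^{2n}\theta+n\sin^2\theta)+\nu\cos^n\theta\sin\theta}\,d\theta$ — the stated quantity — plus the ``nilpotent part'' $\int_0^\pi\frac{a\sin\theta\cos^{2n-1}\theta-\sin\theta\cos\theta}{(a\cos^{2n}\theta+n\sin^2\theta)+\nu\cos^n\theta\sin\theta}\,d\theta$, which I must show vanishes. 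The main tool here is the antiderivative identity $a\sin\theta\cos^{2n-1}\theta-\sin\theta\cos\theta=-\tfrac1{2n}\tfrac{d}{d\theta}\bigl(a\cos^{2n}\theta+n\sin^2\theta\bigr)$, together with the fact that $a\cos^{2n}\theta+n\sin^2\theta$ (and, in case~$(ii)$, the whole denominator) equals $a$ at both endpoints $\theta=0,\pi$: in case~$(i)$ this already gives $\int_0^\pi C_1=-\tfrac1{2n}\ln\frac{D(\pi)}{D(0)}=0$ and hence $\alpha_1=1$. Establishing the cancellation of the nilpotent part in case~$(ii)$—where a reflection $\theta\mapsto\pi-\theta$ must be brought in to control the sign asymmetry of the $\nu$-term in the denominator—is the step I expect to be the main obstacle.

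For the flight time I would invoke the last part of Lemma~\ref{Lemma:Gasull}, which gives $T(\rho)=\hat T(\rho)/\rho^m$ with $\hat T$ analytic at $\rho=0$. Since $p=1$ and $m=n-1$, the flight time of $(x,0)\in\Sigma$ is $\tau^+(x)=\hat T(x)\,x^{1-n}=\sum_{i\geqslant0}\hat T_i x^{i+1-n}=\hat T_0x^{1-n}+\sum_{i\geqslant1}\hat T_i x^{i+1-n}$, and $\hat T_0=\hat T(0)=\int_0^\pi\frac{d\theta}{r_1^m(\theta)B_0(\theta)}>0$ by the positivity of $r_1$ and $B_0$. As $n\geqslant2$, this exhibits the essential difference from the fold and elementary-focus cases: $\tau^+$ is unbounded, hence not analytic, at $x=0$. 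The expressions for $\varphi^-$ and $\tau^-$ follow by applying $(x,y)\mapsto(x,-y)$, as already noted.
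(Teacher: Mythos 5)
Your plan follows the paper's route: apply Lemma~\ref{Lemma:Gasull} to the $(1,n)$-quasi-homogeneous blow-up with $m=n-1$, check $B(0,\theta)>0$ (your positive-definiteness argument via $b^2-4an<0$ in case~$(ii)$ is correct), read off $C_1$, and set $\alpha_1=\exp\bigl(\int_0^\pi C_1\,d\theta\bigr)$; the flight-time part, $\tau^+(x)=\hat T(x)x^{1-n}$ with $\hat T_0=\hat T(0)>0$, is also sound.

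The gap is exactly where you identify it, and it is a real one. Write $D(\theta)=a\cos^{2n}\theta+n\sin^2\theta$, $E(\theta)=\nu\cos^n\theta\sin\theta$ and $\hat D=D+E$. Your identity $a\sin\theta\cos^{2n-1}\theta-\sin\theta\cos\theta=-\tfrac{1}{2n}D'(\theta)$ turns the nilpotent part into $\int_0^\pi(-D'/2n)/\hat D\,d\theta$, which is a clean logarithmic derivative only when $\nu=0$. The reflection $\theta\mapsto\pi-\theta$ sends $D'\mapsto -D'$ and $D\mapsto D$, but $E\mapsto(-1)^nE$, so the integrand is odd only when $n$ is even. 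When $n$ is odd the cancellation does not in fact hold: since $\hat D(0)=\hat D(\pi)=a$ gives $\int_0^\pi\hat D'/\hat D\,d\theta=0$, one gets $\int_0^\pi(-D'/2n)/\hat D\,d\theta=\tfrac{1}{2n}\int_0^\pi E'/\hat D\,d\theta$, whose integrand for $n$ odd is \emph{even} under $\theta\mapsto\pi-\theta$ with no sign cancellation, and a numerical check (e.g.\ $n=3$, $a=b=1$, in case~$(ii)$) gives a nonzero value. Your argument therefore cannot be completed as planned, and the closed form stated in the Corollary itself deserves scrutiny for $n$ odd in case~$(ii)$: regrouping $\int_0^\pi C_1$ yields $\alpha_1=\exp\bigl(\tfrac{\nu}{2n}\int_0^\pi\cos^{n-1}\theta\,(\cos^2\theta+n\sin^2\theta)/\hat D\,d\theta\bigr)$, which agrees with the Corollary's expression precisely when $\int_0^\pi E'/\hat D\,d\theta=0$, i.e.\ when $\nu=0$ or $n$ is even.
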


Notice that $\alpha_1=1$ when $n$ is even and when monodromy condition $(i)$ is satisfied.

\subsection{Cusp} Next, we consider vector fields $Z^+(x,y)=(P(x,y),Q(x,y))$ having a cusp oriented in such way that the characteristic orbits are not contained in $\Sigma^{+}$. The following lemma is a direct adaptation of~\cite[Lemma 12]{Claudios} and provides an useful tool for the study of the nilpotent singular points.

\begin{lemma}\label{Lemma:Nilpotentform}
	Let $Z^+$ be an analytic planar vector fields having a nilpotent singularity at the origin. If the curve $\Sigma=\{(x,y)\in\mathbb{R}^2: y=0\}$ is transversal to the eigenspace of $DZ^+(0)$ at the origin, then there is an analytic change of coordinates $\xi:\mathbb{R}^2\to\mathbb{R}^2$ that transforms $Y$ into the form $\bar{Z}^+(x,y)=(P(x,y),Q(x,y))$ given by
	\begin{equation}\label{eq:NilpotentForm}
		P(x,y)=f(y)+xg(y)+x^2P_0(x,y),\quad Q(x,y)=x+xQ_1(x,y),
	\end{equation}
	where $Q_1(0,0)=0$. Moreover $\Sigma$ is invariant through $\xi$.
\end{lemma}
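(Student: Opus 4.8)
The plan is to isolate the genuine content of the statement and then write down the coordinate change by hand, adapting (and simplifying) the argument of \cite[Lemma~12]{Claudios}. First observe that the first component $P$ requires no work at all: Taylor's formula in the variable $x$ expresses any analytic function $\bar P$ as $\bar P(x,y)=\bar P(0,y)+x\,\partial_x\bar P(0,y)+x^2P_0(x,y)$ with $P_0$ analytic, which is exactly the shape $f(y)+xg(y)+x^2P_0(x,y)$; moreover $f(0)=\bar P(0,0)=0$ because the origin is a singularity. Thus the entire content of the lemma reduces to producing an analytic change of coordinates fixing $\Sigma$ after which the second component vanishes on $\{x=0\}$ and has unit partial derivative in $x$ at the origin, i.e. $\bar Q(x,y)=x\,(1+Q_1(x,y))$ with $Q_1(0,0)=0$.

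The first step is a linear-algebra remark that makes the transversality hypothesis operational. Writing $DZ^+(0)$ as the $2\times 2$ matrix with first row $(\partial_xP,\partial_yP)$ and second row $(\partial_xQ,\partial_yQ)$, all evaluated at the origin, nilpotency forces its trace and determinant to vanish, and a one-line case distinction shows that the eigenspace $\ker DZ^+(0)$ equals the $x$-axis precisely when $\partial_xQ(0,0)=0$. Hence the hypothesis that $\Sigma$ is transversal to this eigenspace is equivalent to $\partial_xQ(0,0)\neq0$.

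With $\partial_xQ(0,0)\neq0$ at our disposal, the analytic implicit function theorem applied to $Q(x,y)=0$ gives a unique analytic function $c$ near $y=0$ with $c(0)=0$ and $Q(c(y),y)\equiv0$. The map $\xi_1(x,y):=(x-c(y),y)$ is then an analytic diffeomorphism near the origin (its Jacobian there has determinant $1$), it fixes $\Sigma=\{y=0\}$ pointwise, and in the new coordinates the transformed vector field has second component $\bar Q(x,y)=Q(x+c(y),y)$, which vanishes identically on $\{x=0\}$ by the choice of $c$. Therefore $\bar Q(x,y)=x\,u(x,y)$ with $u$ analytic and $u(0,0)=\partial_xQ(0,0)\neq0$, and a further rescaling $\xi_2(x,y):=(u(0,0)\,x,y)$ — again an analytic diffeomorphism fixing $\Sigma$ — brings this to $\bar Q=x(1+Q_1)$ with $Q_1(0,0)=0$. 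Composing $\xi_2\circ\xi_1$ and invoking the automatic shape of the first component finishes the argument; as a byproduct, since diffeomorphisms preserve nilpotency, the linearization of the resulting field at the origin is the matrix with rows $(g(0),f'(0))$ and $(1,0)$, whose nilpotency forces $g(0)=f'(0)=0$.

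I do not foresee a substantial obstacle: the proof is essentially bookkeeping. The only point that genuinely uses the hypothesis — and the only step worth double-checking — is the identification of ``transversal to the eigenspace'' with the analytic condition $\partial_xQ(0,0)\neq0$ that powers the implicit function theorem, together with the (immediate) verification that the translation $x\mapsto x-c(y)$ and the rescaling both respect the constraint $\xi(\Sigma)=\Sigma$.
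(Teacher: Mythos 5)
Your proof is correct and constructs the same analytic change of coordinates as the paper, just in a slightly different order: the paper first performs the linear change $(u,v)=(b_{10}x-a_{10}y,y)$ and then applies the implicit function theorem to $u+B(u,v)=0$, while you apply the IFT directly to $Q(x,y)=0$ and rescale afterward; unwinding the definitions (one checks $b_{10}c(y)=F(y)+a_{10}y$) shows the resulting $\xi$ is the same map. Your explicit identification of the transversality hypothesis with $\partial_xQ(0,0)\neq0$, and the observation that the first component automatically has the claimed shape by Taylor's formula in $x$, are useful clarifications of steps the paper leaves implicit.
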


Although the proof is completely analogous to that of \cite[Lemma 12]{Claudios}, we will exhibit it here so that we can highlight the change of variables $\xi$ that puts the vector field into the form~\eqref{eq:NilpotentForm}.

\begin{proof}
Since the origin is a nilpotent singularity of $Z^+$ and the eigenspace of $DZ^+(0)$ is transversal to $\Sigma$, then we must have 
\[
	DY(0)=\left(\begin{array}{cc}
    	a_{10} & -a_{10}^2/b_{10} \vspace{0.2cm} \\
    	b_{10} & -a_{10}
	\end{array}\right),
\]
with $b_{10}\neq 0$. The linear changes of variables $(u,v)=(b_{10}x-a_{10}y,y)$ puts $Y$ into the form $(A(u,v),u+B(u,v))$. Now, consider the unique solution $u=F(v)$ of the equation $u+B(u,v)\equiv0$ passing through $(u,v)=(0,0)$ and define the analytic change of variables
\[
	(\tilde{x},\tilde{y})=(u-F(v),v).
\]
In the new variables, after dropping the tildes, the vector is transformed into \eqref{eq:NilpotentForm} with
\[
	f(y)=A(F(y),y),\quad g(y)=\dfrac{\partial A}{\partial u}(F(y),y)+\dfrac{\partial B}{\partial v}(F(y),y).
\]
Thus, the change of variables in the statement of the Lemma is given by $\xi(x,y)=(\tilde{x},\tilde{y})=(b_{10}x-a_{10}y-F(y),y)$.
\end{proof}

Endowed with normal form \eqref{eq:NilpotentForm}, we can compute the expressions of the half-return map and the flight time associated to the cusp. Suppose that the cusp is oriented such that its characteristic orbits are not in $\Sigma^{+}$. The eigenspace of $DZ^+(0)$ is then transversal to $\Sigma$ and via the reparametrization $\bar{t}=b_{10}t$, we can assume $b_{10}=1$. By applying Lemma \ref{Lemma:Nilpotentform}, the transformation $(x,y)\mapsto (b_{10}x-a_{10}y-F(y),y)$ puts $Z^+(x,y)$ into the normal form \eqref{eq:NilpotentForm}. Note that $\Sigma$ is invariant through this transformation.

 Using Theorem~\cite[Theorem 3.5]{DumortierQT} of the classification of nilpotent singularities, we conclude that $f(y)=a y^{2n}+O(y^{2n+1})$, $g(y)=b y^k+O(y^{k+1})$ and $2n<2k+1$. Due to the orientation of the cusp, we have $a<0$. Using $(p,q)=(2n+1,2)$ in the quasi-homogeneous blow-up, we obtain
\begin{equation}\label{eq:CuspPolar}
	\begin{aligned}
		\dot{r}&=\dfrac{\cos\theta\sin\theta(1+a\sin^{2n-1}\theta)}{((2n+1)\cos^2\theta+2\sin^2\theta)}r^{2n}+O(r^{2n+1}),\\
		\dot{\theta}&=\dfrac{(2n-1)\cos^2\theta-2a\sin^{2n+1}\theta}{((2n+1)\cos^2\theta+2\sin^2\theta)}r^{2n-1}+O(r^{2n}).
	\end{aligned}
\end{equation}
We have that $(2n-1)\cos^2\theta-2a\sin^{2n+1}\theta>0$ for $\theta\in[0,\pi]$ and thus system \eqref{eq:CuspPolar} satisfies the hypotheses of Lemma \ref{Lemma:Gasull} with $m=2n-1$. In this case, since $p=2n+1$, neither the half-return map nor the flight time are guaranteed to be analytic functions. Using equation \eqref{eq:HMGasullcoef}, it is possible to compute the first coefficient of the half-return map. We have that
\[
	C_1(\theta)=\frac{\cos\theta\sin\theta(1+a\sin^{2n-1}\theta)}{(2n-1)\cos^2\theta-2a\sin^{2n+1}\theta}.
\]
Since $C_1(\pi/2-\phi)=-C_1(\pi/2+\phi)$,
\[
	r_1(\pi)=\exp\left(\int_{0}^{\pi}C_1(\theta)d\theta\right)=\exp(0)=1.
\]
Hence, by~\eqref{eq:HRgeneric}, we can state the next result.

\begin{corollary}\label{corol:4cusp}
	Let $Z^+(x,y)=(P(x,y),Q(x,y))$ be an analytic vector field having a nilpotent cusp at the origin oriented in such way that the characteristic orbits are not contained in $\Sigma^{+}$. Then the half-return map and the flight time associated to $\Sigma$ are given by
	\begin{equation}\label{eq:HRcusp}
		\begin{aligned}
			\varphi^{+}(x)&=-\left(x^{1/(2n+1)}+\sum_{i\geqslant 2}\alpha_i x^{i/(2n+1)}\right)^{(2n+1)}=-x+O\left(x^{1+1/(2n+1)}\right), \vspace{0.2cm} \\
			\tau^{+}(x)&=\hat{T}_0x^{\frac{1-2n}{2n+1}}+\sum_{i\geqslant1}\hat{T}_i x^{\frac{1-2n+i}{2n+1}}, \\
		\end{aligned}
	\end{equation}
with $b_{10}\hat{T}_0>0$, where $b_{10}$ is the element of the second row and first column of $DZ^+(0)$.
\end{corollary}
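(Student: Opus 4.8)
The statement is essentially a corollary of the normal-form reduction and blow-up carried out just above, together with Lemmas~\ref{Lemma:Nilpotentform} and~\ref{Lemma:Gasull}, so the plan is to organize those ingredients and then read off the two expansions from the generic formulas \eqref{eq:HRgeneric} and \eqref{eq:FlightTimegeneric}. First I would record the reduction: because the characteristic orbits of the cusp are not contained in $\Sigma^+$, the eigenspace of $DZ^+(0)$ is transversal to $\Sigma$; after the time reparametrization $\bar t=b_{10}t$ and the linear change normalizing $DZ^+(0)$, Lemma~\ref{Lemma:Nilpotentform} conjugates $Z^+$ --- through a coordinate change $\xi$ fixing $\Sigma$ setwise --- to the normal form \eqref{eq:NilpotentForm}, and \cite[Theorem~3.5]{DumortierQT} forces $f(y)=ay^{2n}+O(y^{2n+1})$, $g(y)=by^k+O(y^{k+1})$, $2n<2k+1$, with $a<0$ by the prescribed orientation.

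Then I would apply the quasi-homogeneous blow-up with weights $(p,q)=(2n+1,2)$, obtaining \eqref{eq:CuspPolar}, and verify it meets the hypotheses of Lemma~\ref{Lemma:Gasull} with $m=2n-1$: the blown-up field is analytic in $(r,\theta)$ (a routine check using the exponent bounds on $f,g$), and $B(0,\theta)=\bigl((2n-1)\cos^2\theta-2a\sin^{2n+1}\theta\bigr)\big/\bigl((2n+1)\cos^2\theta+2\sin^2\theta\bigr)>0$ on $[0,\pi]$ thanks to $a<0$. The first coefficient of \eqref{eq:HMGasull2} is $C_1(\theta)=\cos\theta\sin\theta(1+a\sin^{2n-1}\theta)\big/\bigl((2n-1)\cos^2\theta-2a\sin^{2n+1}\theta\bigr)$, which satisfies $C_1(\pi/2-\phi)=-C_1(\pi/2+\phi)$, so $\int_0^\pi C_1\,d\theta=0$ and hence $r_1(\pi)=\exp(0)=1$ by \eqref{eq:HMGasullcoef}.

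Now the expansions drop out. Since $p=2n+1$ and $r_1(\pi)=1$, formula \eqref{eq:HRgeneric} yields $\varphi^+(x)=-\bigl(x^{1/(2n+1)}+\sum_{i\geqslant2}\alpha_i x^{i/(2n+1)}\bigr)^{2n+1}$ with $\alpha_i:=r_i(\pi)$; expanding the power, the leading term is $-r_1(\pi)^{2n+1}x=-x$ and the next is $O\bigl(x^{1+1/(2n+1)}\bigr)$, and pulling back along the linear map $\xi|_\Sigma$ preserves both leading behaviours. Since $m=2n-1$, formula \eqref{eq:FlightTimegeneric} gives $\tau^+(x)=\hat T\bigl(x^{1/(2n+1)}\bigr)\,x^{(1-2n)/(2n+1)}=\sum_{i\geqslant0}\hat T_i x^{(1-2n+i)/(2n+1)}$, and Lemma~\ref{Lemma:Gasull} guarantees $\hat T(0)>0$ for the normalized field.

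The one point that needs care --- and which I expect to be the main obstacle --- is the sign statement $b_{10}\hat T_0>0$, since $\hat T_0$ refers to the original vector field. I would trace how the intrinsically positive constant $\hat T(0)$ of the normalized field is transformed into $\hat T_0$ under the time reparametrization $\bar t=b_{10}t$ (orientation-reversing when $b_{10}<0$) and the $x$-rescaling induced by $\xi|_\Sigma$, and check that the composite factor combines with $b_{10}$ to give $b_{10}\hat T_0>0$. Everything else --- analyticity of the blown-up system, the explicit integrals of Lemma~\ref{Lemma:Gasull}, and the binomial expansion producing $-x+O(x^{1+1/(2n+1)})$ --- is routine.
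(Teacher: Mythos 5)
Your proposal is correct and follows essentially the same route as the paper: normal-form reduction via Lemma~\ref{Lemma:Nilpotentform} and the classification in~\cite[Theorem~3.5]{DumortierQT}, the quasi-homogeneous blow-up with weights $(p,q)=(2n+1,2)$, verification of Lemma~\ref{Lemma:Gasull} with $m=2n-1$, and the antisymmetry of $C_1$ giving $r_1(\pi)=1$, after which the two expansions drop out of \eqref{eq:HRgeneric} and \eqref{eq:FlightTimegeneric}. Your extra care about the sign $b_{10}\hat T_0>0$ is well placed and amounts to the observation (left implicit in the paper) that under the reparametrization $\bar t=b_{10}t$ the normalized flight time $\hat T(0)>0$ corresponds to $\hat T_0=\hat T(0)/b_{10}$ in the original time, while $\xi|_\Sigma$ is the identity once $b_{10}=1$ is assumed, so no further rescaling of $x$ is needed.
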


\subsection{Periodic orbit}\label{Sec3.7} Suppose $Z^+(x,y)=(P(x,y),Q(x,y))$ is an analytic vector field with a periodic orbit $\Gamma\subset\Sigma^+$ intersecting $\Sigma$ only at the origin and with contact $2n$, $n\geqslant1$. Observe that $Z^+$ is defined in a neighborhood of $\Gamma$. Thus, we can consider a small vertical transversal section $\ell\subset\Sigma^-$ through the origin. In particular, we can consider a Poincaré map $\Pi\colon\ell\to\ell$ (see Figure~\ref{Fig7t}).
\begin{figure}[ht]
	\begin{center}
		\begin{overpic}[height=3cm]{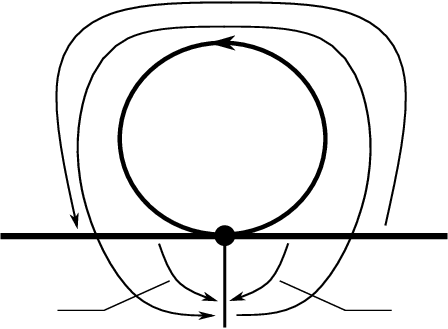} 
			\put(74,32){$\Pi$}
			\put(89,64){$\varphi$}
			\put(32,48){$\Gamma$}
			\put(90,3){$\gamma^+$}
			\put(5,3){$\gamma^-$}
			\put(51,-5){$\ell$}
			\put(101,18){$\Sigma$}
		\end{overpic}
	\end{center}
	\caption{Illustration of the Poincaré and transition maps, with $P(0,0)>0$.}\label{Fig7t}
\end{figure}
It is well-known~\cite[Lemma~$4$ of~$\mathsection28$]{Andronov} that $\Pi$ is an analytic diffeomorphism such that $\Pi(0)=0$, and with leading coefficient given by an integral expression. From an application of the usual Implicit Function Theorem, the flight time $\vartheta^\Pi\colon\ell\to\mathbb{R}$ associated with $\Pi$ is analytic and given by $\vartheta^\Pi(x)=T_0+O(x)$, with $T_0>0$ the period of $\Gamma$. 

Consider the transition maps $\gamma^+\colon[0,x_0)\to\ell$, $\gamma^-\colon(-x_0,0]\to\ell$, given by the solution of $Z^+$, and their associated flight time $\vartheta^+\colon[0,x_0)\to\mathbb{R}$, $\vartheta^-\colon(-x_0,0]\to\mathbb{R}$. It follows from~\cite{NovLSilvaPAMS2022} that $\gamma^{\pm}$ are analytic diffeomorphisms, $\vartheta^\pm$ are analytic, and $\gamma^\pm(0)=\vartheta^\pm(0)=0$ (see Figure~\ref{Fig7t}). The interested reader can recursively calculate the coefficients of the series expansions of $\gamma^\pm$ and $\vartheta^\pm$ by using the techniques developed at~\cite{NovLSilvaPAMS2022}.

The half-return map and flight time associated $\Sigma$ are then given by
\[
	\varphi^+(x)=(\gamma^-)^{-1}\circ \Pi\circ\gamma^+(x), \quad \tau^+(x)=-\delta_0\vartheta^\Pi\big(\gamma^+(x)\bigr)+\vartheta^+(x)-\vartheta^-\big(\varphi^+(x)\bigr),
\]
where $\delta_0=\operatorname{sign}P(0,0)$. In particular, we have the following result.

\begin{corollary}\label{propo:1orbit}
	Let $Z^+(x,y)=(P(x,y),Q(x,y))$ be an analytic vector field with a periodic orbit $\Gamma\subset\Sigma^+$ intersecting $\Sigma$ only at the origin and with contact $2n$, $n\geqslant1$. Then the half-return map and the flight time associated to $\Sigma$ are analytic and given by
	\begin{equation}\label{eq:HRPeriodicOrbit}
		\varphi^+(x)=\sum_{i\geqslant1}\alpha_ix^i, \quad \tau^+(x)=\hat{T}_0+\sum_{i\geqslant1}\hat{T}_ix^i,
	\end{equation}
	with $P(0,0)\hat{T}_0>0$, $|\hat{T}_0|=T_0$, and $T_0>0$ the period of $\Gamma$.
\end{corollary}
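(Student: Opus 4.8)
The plan is to read off both $\varphi^{+}$ and $\tau^{+}$ directly from the factorizations displayed immediately before the statement, namely $\varphi^{+}=(\gamma^{-})^{-1}\circ\Pi\circ\gamma^{+}$ and $\tau^{+}(x)=-\delta_0\,\vartheta^{\Pi}(\gamma^{+}(x))+\vartheta^{+}(x)-\vartheta^{-}(\varphi^{+}(x))$, by assembling the analyticity and the behaviour at the origin of the three building blocks: the Poincar\'e map $\Pi$ of $\Gamma$ on the vertical transversal $\ell$ with its flight time $\vartheta^{\Pi}$, and the transition maps $\gamma^{\pm}\colon\Sigma\to\ell$ with their flight times $\vartheta^{\pm}$. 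The required inputs are all classical. By the standard Poincar\'e-map analysis \cite[Lemma~4 of~\S28]{Andronov}, $\Pi$ is an analytic diffeomorphism of a neighbourhood of $0$ in $\ell$ fixing $0$; applying the Implicit Function Theorem to the condition that the orbit meets $\ell$ shows that $\vartheta^{\Pi}$ is analytic with $\vartheta^{\Pi}(0)=T_0>0$, the period of $\Gamma$. By \cite{NovLSilvaPAMS2022} — applicable because $\Gamma$ meets $\Sigma$ only at the origin and with even contact, so that the passage from $\Sigma$ to the regular point $0\in\ell$ is well posed — the maps $\gamma^{\pm}$ are analytic diffeomorphisms, the flight times $\vartheta^{\pm}$ are analytic, and $\gamma^{\pm}(0)=\vartheta^{\pm}(0)=0$; hence $(\gamma^{-})^{-1}$ is again an analytic diffeomorphism fixing $0$.

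With these inputs the analyticity assertions are immediate. On a sufficiently small interval $[0,x_0)$ the composition $(\gamma^{-})^{-1}\circ\Pi\circ\gamma^{+}$ is well defined, since each factor maps a neighbourhood of $0$ into a neighbourhood of $0$, and it is analytic as a composition of analytic maps; because every factor fixes $0$ we get $\varphi^{+}(0)=0$, so $\varphi^{+}(x)=\sum_{i\geqslant1}\alpha_i x^{i}$, with $\alpha_1=(\varphi^{+})'(0)\neq0$ by the chain rule, being a product of nonzero derivatives of diffeomorphisms (so $\varphi^{+}$ is itself an analytic diffeomorphism, consistent with its role in~\ref{H1}). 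Likewise $\tau^{+}$ is a finite combination of compositions of the analytic functions $\vartheta^{\Pi},\vartheta^{\pm},\gamma^{+},\varphi^{+}$, hence analytic on $[0,x_0)$, giving $\tau^{+}(x)=\hat{T}_0+\sum_{i\geqslant1}\hat{T}_i x^{i}$.

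To pin down $\hat{T}_0$, I would evaluate the displayed expression for $\tau^{+}$ at $x=0$: using $\gamma^{+}(0)=0$, $\varphi^{+}(0)=0$ and $\vartheta^{\pm}(0)=0$, every term drops out except the one carrying $\vartheta^{\Pi}(\gamma^{+}(0))=\vartheta^{\Pi}(0)=T_0$, so $\hat{T}_0=\tau^{+}(0)$ equals $T_0$ up to sign, whence $|\hat{T}_0|=T_0$; the sign is controlled by the factor $\delta_0=\operatorname{sign}P(0,0)$ together with the direction in which each arc is traversed, and it is the one yielding $P(0,0)\hat{T}_0>0$, matching~\eqref{eq:HRPeriodicOrbit}. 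This last point is also where the only genuine care is needed: the substantive analytic facts are imported from \cite{Andronov,NovLSilvaPAMS2022}, and what remains is orientation bookkeeping — checking that for small $x\geqslant0$ the three arcs concatenate into a single orbit arc of $Z^{+}$ (so that $\Pi(\gamma^{+}(x))$ lies in the domain of $(\gamma^{-})^{-1}$ and the resulting endpoint satisfies $\varphi^{+}(x)<0$, as~\ref{H1} requires) and that the flight times add up to the stated formula with the correct sign of $\delta_0$. This amounts to tracking which side of $\ell$ is used and whether each transition is taken along the forward or the backward flow, and is the main, if modest, obstacle.
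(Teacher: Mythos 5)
Your proposal follows the paper's proof exactly: it reads off the factorizations $\varphi^{+}=(\gamma^{-})^{-1}\circ\Pi\circ\gamma^{+}$ and $\tau^{+}(x)=-\delta_0\vartheta^{\Pi}(\gamma^{+}(x))+\vartheta^{+}(x)-\vartheta^{-}(\varphi^{+}(x))$ established in the discussion preceding the statement, imports the same inputs on $\Pi$, $\vartheta^{\Pi}$, $\gamma^{\pm}$, $\vartheta^{\pm}$ from the same references, and concludes analyticity by composition and the leading coefficient by evaluating at $x=0$. The paper gives no further argument beyond this discussion, so your reconstruction coincides with it; the orientation bookkeeping you flag as the one delicate point is indeed the only nontrivial part.
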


\subsection{Tangential hyperbolic polycycle}\label{Sec3.8} Suppose $Z^+(x,y)=(P(x,y),Q(x,y))$ is analytic and has a hyperbolic polycycle $\Gamma$, composed by $n$ hyperbolic saddles $\{p_1,\dots,p_n\}$ (not necessarily distinct), and compatibly oriented separatrices $\{\gamma_1,\dots,\gamma_n\}$ connecting them (meaning that $\gamma_i$ has $\{p_i\}$ as the $\alpha$-limit set and $\{p_{i+1}\}$ as the $\omega$-limit set). Moreover, we suppose $\Gamma\subset\Sigma^+$ intersecting $\Sigma$ only at the origin, with contact $2n$ and such that the origin is not one of the hyperbolic saddles of $\Gamma$. Since $\Gamma$ is tangent to $\Sigma$, we call it \emph{tangential polycycle} (see Figure~\ref{Fig8}).
\begin{figure}[ht]
	\begin{center}
		\begin{overpic}[height=4cm]{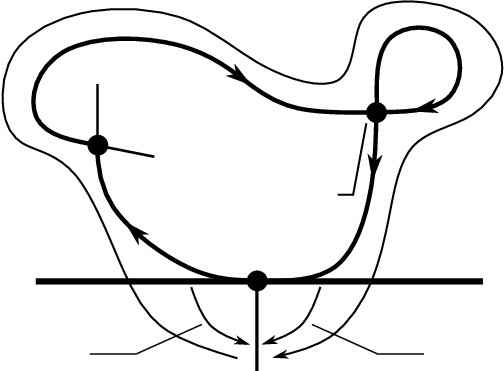} 
			\put(61,59){$D$}
			\put(55,46){$\Gamma$}
			\put(33,25){$L_1$}
			\put(82,60){$L_2$}
			\put(30,59){$L_3$}
			\put(45,33.5){$p_1=p_2$}
			\put(22,47){$p_3$}			
			\put(85,3){$\gamma^+$}
			\put(11.5,3){$\gamma^-$}
			\put(51.5,-3){$\ell$}
		\end{overpic}
	\end{center}
	\caption{Illustration of a tangential polycycle $\Gamma$ and its Dulac map $D$.}\label{Fig8}
\end{figure}
The approach in this case is similar with the one made at Section~\ref{Sec3.7}. The only difference is that instead of a Poincaré map, this time we have a Dulac map $D\colon\ell\to\ell$, which is not necessarily a diffeomorphism. Nevertheless, we know from the seminal works of Marin and Villadelprat~\cite{MarVil2020,MarVil2021,MarVil2024}, in addition with their applications~\cite{MarVil2022,MarVil2025,AraSan2025}, that $D$ is of Dulac-type (recall Definition~\ref{Def1}) and given by
\begin{equation}\label{51}
	D(x)=\Delta_{00}x^r+\mathcal{R}(x),
\end{equation}
with $\mathcal{R}\in\mathcal{F}^\infty_{r+\eta}$, $\eta>0$, $\Delta_{00}<0$, and $r=\prod_{i=1}^{n}r_i$ being the \emph{graphic number} of $\Gamma$, where $r_i$ is the \emph{hyperbolicity ratio} of $p_i$. That is, $r_i=|\lambda_i^s|/\lambda_i^u$, with $\lambda_i^s<0<\lambda_i^u$ the eigenvalues associated with $p_i$. Similar to Section~\ref{Sec3.7} one can conclude
\[
	\varphi^+(x)=(\gamma^-)^{-1}\circ D\circ\gamma^+(x), \quad \tau^+(x)=-\delta_0\vartheta^D\big(\gamma^+(x)\bigr)+\vartheta^+(x)-\vartheta^-\big(\varphi^+(x)\bigr).
\]
with $\delta_0=1$ (resp. $\delta_0=-1$) if $\Gamma$ has the clockwise (resp. counterclockwise) orientation, and $\vartheta^D\colon\ell\to\mathbb{R}$ the flight time associated to $D$. Since $\gamma^\pm$ are diffeomorphisms, it follows from~\eqref{51} that $\varphi^+$ is of Dulac-type. Similarly, we have from~\cite[Theorem~$A(b)$]{MarQueVil2025} that $\vartheta^D(x)=-T_0\ln x+\mathcal{S}(x)$, with $T_0>0$, and $\mathcal{S}\in\mathcal{F}^\infty_0$. 

\subsection{Singular hyperbolic polycycle} Suppose $Z^+(x,y)=(P(x,y),Q(x,y))$ is analytic and has a hyperbolic polycycle $\Gamma$, with one of its hyperbolic saddles located at the origin. Assume also that the associated eigenspaces of this saddle are transversal to $\Sigma$. Since $\Gamma$ is not tangent to $\Sigma$, we call it \emph{singular polycycle} (see Figure~\ref{Fig9}).
\begin{figure}[ht]
	\begin{center}
		\begin{overpic}[height=3cm]{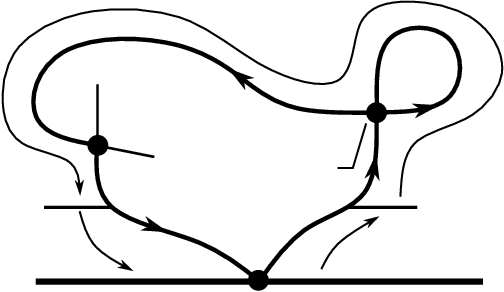} 
			\put(70,5){$D^+$}
			\put(7,5){$D^-$}
			\put(62,43){$D$}
			\put(86,15){$\ell^+$}
			\put(4,15){$\ell^-$}
			\put(51,13){$L_1$}
			\put(82,44){$L_2$}
			\put(25,44){$L_3$}
			\put(33,13){$L_4$}
			\put(45,24){$p_1=p_2$}
			\put(22,31){$p_3$}
			\put(50,-4){$p_4$}
		\end{overpic}
	\end{center}
	\caption{Illustration of a singular polycycle $\Gamma$ and its Dulac function $D$.}\label{Fig9}
\end{figure}
The approach follows similarly to Section~\ref{Sec3.8}, we need only to study the transitions functions $D^+\colon(0,x_0)\to\ell^+$, $D^-\colon(-x_0,0)\to\ell^-$. To this end, we will make use of the \emph{blow up technique}, see \cite[Chapter~$3$]{DumortierQT} and~\cite{BlowUp}. More precisely, suppose first that the unstable eigenspace of the origin is given by $y=ax$, for some $a>0$. Doing a \emph{directional blow up} in the $x$-direction, whose new variables are characterized by $\phi_x(x,y)=(x,y/x):=(x_1,y_1)$ (notice that $\phi_x$ is a diffeomorphism at $x\neq0$), we observe that the unstable eigenspace $y=ax$ is given by $y_1=a$ at this new set of variables (see Figure~\ref{Fig10}).
\begin{figure}[ht]
	\begin{center}
		\begin{overpic}[width=8cm]{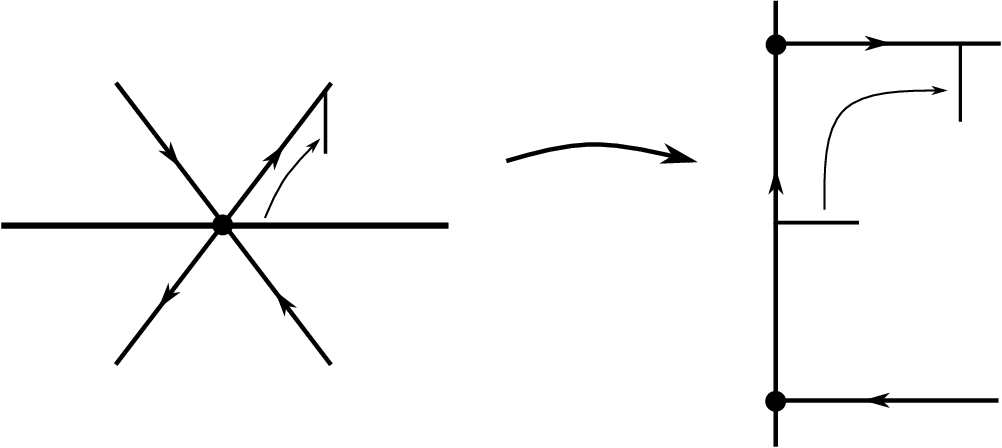} 
			\put(29,24){$D^+$}
			\put(30,37){$y=ax$}
			\put(46,21.5){$y=0$}
			\put(30,5){$y=-bx$}
			\put(58,32){$\phi_x$}
			\put(101,39.5){$y_1=a$}
			\put(86.5,21.5){$y_1=0$}
			\put(101,4){$y_1=-b$}
			\put(85,30){$D_1^+$}
			\put(72,39){$p_1$}
		\end{overpic}
	\end{center}
	\caption{Illustration of the blow up of the origin.}\label{Fig10}
\end{figure}
In these new variables we have a well-defined Dulac map $D_1^+\colon(0,\varepsilon)\to\ell_1^+$. In particular, it can be written as
\[
	D_1^+(x_1)=\Delta_{00}^1x_1^{r_1}+\mathcal{R}_1^+(x_1),
\]
with $r_1>0$ the hyperbolicity ratio of the hyperbolic saddle $p_1:=(0,a)$, $\Delta_{00}^1\neq0$, and $\mathcal{R}_1^+\in\mathcal{F}^\infty_{r_1+\eta_1}$, $\eta_1>0$. Since $D^+=\phi_x^{-1}\circ D_1^+\circ\phi_x$, it is not hard to see that
\[
	D^+(x)=\Delta_{00}x^{r_1}+\mathcal{R}^+(x),
\]
for some $\Delta_{00}\neq0$, and $\mathcal{R}^+\in\mathcal{F}^\infty_{r_1+\eta}$, $\eta>0$. In particular, it is also of Dulac-type. A similar argumentation also holds for $D^-$. Similarly to Section~\ref{Sec3.8}, if we let
\[
	D(x)=\Delta_{00}x^r+\mathcal{R}(x), \quad D^\pm(x)=\Delta_{00}^\pm x^{r^\pm}+\mathcal{R}^\pm(x),
\]
be given as in Figure~\ref{Fig10}, then one can conclude
\[
	\varphi^+(x)=(D^-)^{-1}\circ D\circ D^+(x)=\overline{\Delta}_{00}x^R+\overline{\mathcal{R}}(x),
\]
with $R=\frac{r^+}{r^-}r$, $\overline{\Delta}_{00}<0$, and $\overline{\mathcal{R}}\in\mathcal{F}^\infty_{R+\eta}$, $\eta>0$. In particular, it is also of Dulac-type. As a consequence, we have from~\cite{MarQueVil2025} that its associated flight time is given by
\[
	\tau^+(x)=\delta_0T_0\ln x+\mathcal{S}(x),
\]
with $\delta_0=1$ (resp. $\delta_0=-1$) if $\Gamma$ has the clockwise (resp. counterclockwise) orientation, $T_0>0$, and $\mathcal{S}\in\mathcal{F}^\infty_0$.

The case in which both eigenspace lies in the second quadrant (including $x=0$) follows similarly. First we consider a small enough segment $\ell\subset\Sigma^+$, through the origin and contained in the line $y=x$. Then we consider functions $R_x\colon(0,\varepsilon)\to\ell$, and $R_y\colon\ell\setminus\{\mathcal{O}\}\to\ell^+$, given by the the solution (resp. the time-reversed solution) of $Z^+$ if $\delta=-1$ (resp. $\delta=1$), as illustrated in Figure~\ref{Fig11}.
\begin{figure}[ht]
	\begin{center}
		\begin{overpic}[height=3cm]{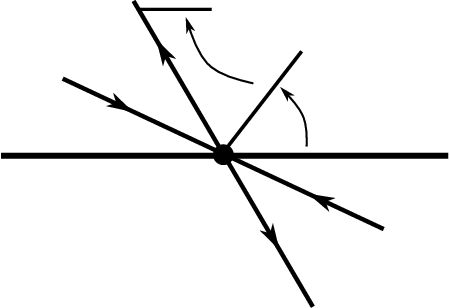} 
			\put(68,57){$\ell$}
			\put(70,40){$R_x$}
			\put(48,55){$R_y$}
			\put(101,32){$\Sigma$}
			\put(43,24){$\mathcal{O}$}
		\end{overpic}
	\end{center}
	\caption{Illustration of functions $R_x$ and $R_y$.}\label{Fig11}
\end{figure}
Similarly to the previous reasoning, it is not hard to see from a directional blow up in the $x$-direction (resp. $y$-direction), that $R_x$ is a flow-box diffeomorphism (resp. $R_y$ is of Dulac-type). Thus, $D^+:=R_y\circ R_x$ is also of Dulac-type.

In summary, regarding hyperbolic polycycles, we state the next result.

\begin{corollary}\label{propo:2poly}
	Let $Z^+(x,y)=(P(x,y),Q(x,y))$ be an analytic vector field with a hyperbolic polycycle $\Gamma\subset\Sigma^+$ intersecting $\Sigma$ only at the origin, either tangentially or at a corner. Then, the half-return map and the flight time associated to $\Sigma$ are given by
	\begin{equation}\label{eq:HRpolycycle}
		\varphi^+(x)=C_0 x^r+\mathcal{R}(x),\quad \tau(x)=-\hat{T}_0\ln x+\mathcal{S}(x),
	\end{equation}
	with $r>0$, $C_0<0$, $\delta_0\hat{T}_0<0$, $\mathcal{R}\in\mathcal{F}_{r+\ell}^\infty$, $\ell>0$, and $\mathcal{S}\in\mathcal{F}_0^{\infty}$. In particular, $\varphi$ is of Dulac-type.
\end{corollary}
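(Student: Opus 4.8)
The proof consists in collecting the two constructions just carried out --- the tangential polycycle of Section~\ref{Sec3.8} and the singular polycycle of the preceding subsection --- and verifying that the resulting expansions fit the asserted normal forms. I would organise it according to whether $\Gamma$ meets $\Sigma$ tangentially or at a corner. In both cases the structural tool is the same: the class of Dulac-type functions (Definition~\ref{Def2}), and more precisely the scale of classes $\mathcal{F}^k_\ell$ (Definition~\ref{Def1}), is stable under pre- and post-composition with analytic diffeomorphisms fixing the origin and under inversion of such diffeomorphisms, and these operations preserve the leading exponent and the sign of the leading coefficient; this is exactly the content of Lemma~\ref{L1} and Remark~\ref{R8} of the Appendix, which I would invoke repeatedly.

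\textbf{Tangential case.} I would start from the factorisation $\varphi^+=(\gamma^-)^{-1}\circ D\circ\gamma^+$ recorded in Section~\ref{Sec3.8}, where $\gamma^\pm$ are the analytic transition diffeomorphisms with $\gamma^\pm(0)=0$ from~\cite{NovLSilvaPAMS2022}, and $D$ is the Dulac map of the polycycle, which by the works of Marin and Villadelprat~\cite{MarVil2020,MarVil2021,MarVil2024} is of Dulac-type with $D(x)=\Delta_{00}x^{r}+\mathcal{R}_D(x)$, $\mathcal{R}_D\in\mathcal{F}^\infty_{r+\eta}$, $\Delta_{00}<0$, and $r$ the graphic number of $\Gamma$ (see~\eqref{51}). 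Applying the closure properties above yields $\varphi^+(x)=C_0x^{r}+\mathcal{R}(x)$ with $\mathcal{R}\in\mathcal{F}^\infty_{r+\ell}$ for some $\ell>0$ and $C_0<0$ inherited from $\Delta_{00}<0$; by Definition~\ref{Def2}, $\varphi^+$ is of Dulac-type. For the flight time I would use $\tau^+(x)=-\delta_0\,\vartheta^D(\gamma^+(x))+\vartheta^+(x)-\vartheta^-(\varphi^+(x))$, together with $\vartheta^D(x)=-T_0\ln x+\mathcal{S}_0(x)$, $T_0>0$, $\mathcal{S}_0\in\mathcal{F}^\infty_0$, from~\cite[Theorem~$A(b)$]{MarQueVil2025}, and the analyticity of $\vartheta^\pm$ with $\vartheta^\pm(0)=0$. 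Writing $\gamma^+(x)=\alpha x(1+xh(x))$ with $\alpha>0$ and $h$ analytic gives $\ln\gamma^+(x)=\ln x+(\ln\alpha+\ln(1+xh(x)))$, so that $\vartheta^D(\gamma^+(x))=-T_0\ln x+(\text{bounded }C^\infty)$, the bounded part being the sum of an analytic function and $\mathcal{S}_0\circ\gamma^+\in\mathcal{F}^\infty_0$. Since $\vartheta^+$ and $\vartheta^-\circ\varphi^+$ are also $C^\infty$ and bounded near $x=0$ (the latter because $\varphi^+(x)\to0$ and $\vartheta^-(0)=0$), collecting terms gives $\tau^+(x)=-\hat{T}_0\ln x+\mathcal{S}(x)$ with $\hat{T}_0=-\delta_0T_0$, hence $\delta_0\hat{T}_0=-T_0<0$, and $\mathcal{S}\in\mathcal{F}^\infty_0$.

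\textbf{Corner case.} Here I would invoke the blow-up argument from the subsection on singular hyperbolic polycycles: a directional blow-up desingularises the hyperbolic saddle at the origin, so that the two local transition maps $D^\pm$ become Dulac-type, and then, since $\varphi^+=(D^-)^{-1}\circ D\circ D^+$, the same closure properties give $\varphi^+(x)=C_0x^{R}+\mathcal{R}(x)$ with $R=\frac{r^+}{r^-}\,r>0$, $\mathcal{R}\in\mathcal{F}^\infty_{R+\ell}$, $\ell>0$, and $C_0<0$; the degenerate sub-case in which both eigenspaces lie in the second quadrant is reduced to this one via the auxiliary maps $R_x,R_y$, which are respectively a flow-box diffeomorphism and a Dulac-type map. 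The flight time is handled exactly as in that subsection, giving $\tau^+(x)=\delta_0T_0\ln x+\mathcal{S}(x)=-\hat{T}_0\ln x+\mathcal{S}(x)$ with $\hat{T}_0=-\delta_0T_0$ (so $\delta_0\hat{T}_0<0$) and $\mathcal{S}\in\mathcal{F}^\infty_0$. In the corollary statement one simply renames the relevant exponent ($r$ or $R$) as $r$.

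The only delicate point is the bookkeeping inside the $\mathcal{F}^k_\ell$-calculus: checking that composing a Dulac-type function with analytic diffeomorphisms and taking inverses neither destroys membership in these classes nor alters the leading exponent or the sign of the leading coefficient. This is precisely the role of Lemma~\ref{L1} and Remark~\ref{R8}, so once those are available the remainder of the argument is a routine assembly of the expansions already displayed above.
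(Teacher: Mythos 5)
Your proposal is correct and takes essentially the same route as the paper: the corollary is just the summary statement of Sections~\ref{Sec3.8} (tangential) and the following subsection (corner), and you faithfully reconstruct the factorizations $\varphi^+=(\gamma^-)^{-1}\circ D\circ\gamma^+$ and $\varphi^+=(D^-)^{-1}\circ D\circ D^+$, invoke the Marín--Villadelprat asymptotics for $D$ and $\vartheta^D$, and assemble the flight-time expansion exactly as the paper does. The only minor imprecision is attributing the closure of $\mathcal{F}^k_\ell$ under composition with analytic diffeomorphisms fixing the origin to Lemma~\ref{L1}/Remark~\ref{R8} (which actually concern inversion of Dulac-type maps, changing $r$ to $1/r$); the paper too uses this closure property implicitly without isolating it as a lemma, so this is a presentation point rather than a gap.
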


\begin{remark}\label{R9}
	It follows from~\cite[Proposition~9]{BuzGasSan2025} that for any $r_0>0$, there is a polynomial hyperbolic polycycle $\Gamma$ (i.e. realizable by a polynomial vector field) such that $r=r_0$ at~\eqref{eq:HRpolycycle}.
\end{remark}

\section{Estimating the Limit Cycle Period}\label{Sec4}

In this section we derive the leading terms of the asymptotic expansion of the period function $\mathcal{T}(b)$ and the position function $x(b)$ of the crossing limit cycle unfolding from the pseudo-Hopf bifurcation with respect to the bifurcation parameter $b$. As opposed to the position of the crossing limit cycle, hypothesis~\ref{H1} in addition with~\ref{H2'} or~\ref{H2''} are not sufficient to determine its period. Thus, in order to proceed, we shall consider piecewise vector fields~\eqref{eq:Z} satisfying the following assumption.
\begin{enumerate}[label=$(H_3)$]
    \item\label{H3} Each component $Z^{\pm}(x,y)$ have the origin as one of the possibilities:
	\begin{enumerate}[label=(\roman*)]
	    \item an invisible fold of even multiplicity;
	    \item an elementary center/focus;
	    \item a nilpotent center/focus;
	    \item a cusp;
	    \item a non-flat tangency point between $\Sigma$ and a periodic orbit $\Gamma$, entirely contained either in $\Sigma^\pm$;
	    \item a non-flat tangency point between $\Sigma$ and a polycycle $\Gamma$, entirely contained either in $\Sigma^\pm$;
	    \item a hyperbolic saddle of a polycycle $\Gamma$ entirely contained either in $\Sigma^\pm$, with associated eigenspaces tranversal to $\Sigma$.
	\end{enumerate} 
\end{enumerate}

As in the smooth setting, it is clear that the period function differs from case to case. However, for each pairing of components among the above possibilities, the period function have one of the following behaviors:
\begin{subequations}\label{behaviors}
	\begin{align}
 	   \label{beh:a}\mathcal{T}(b)&\sim T_0|b|^\frac{1}{n},\; n\in\mathbb{N} \\
 	   \label{beh:b}\mathcal{T}(b)&\sim T_0+o\bigl(|b|^\frac{1}{n}\bigr),\; n\in\mathbb{N} \\
	   \label{beh:c}\mathcal{T}(b)&\sim -T_0\ln|b|;\\
	   \label{beh:d}\mathcal{T}(b)&\sim T_0|b|^{-\frac{1}{n}},\; n\in\mathbb{N}, \\
	   \label{beh:e}\mathcal{T}(b)&\sim T_0|b|^\ell,\; 	\ell\in\mathbb{Q}_{<0}.
	\end{align}
\end{subequations}
The notation $f(b)\sim a+g(b)$ means, 
\[
	\lim\limits_{b\to 0}\dfrac{f(b)-a}{g(b)}=1.
\]
In what follows, we formalize the above discussion. Let $Z(x,y)$ be a piecewise vector field of the form~\eqref{eq:Z} for which~\ref{H1} and~\ref{H3} holds. The expressions for the half-return maps $\varphi^{\pm}(x)$ and the flight times $\tau^{\pm}(x)$ can be readily derived from Corollaries~\ref{corol:1fold}, \ref{corol:2focus}, \ref{corol:3nilfocus}, \ref{corol:4cusp}, \ref{propo:1orbit} and~\ref{propo:2poly}. Now, for all possibilities (i-vii) above, either~\ref{H2'} or~\ref{H2''} holds, and thus, under the generic assumption
\[
	\Delta_0(x)=V_1x^{r}+o(x^{r}), \quad V_1\neq0, \quad r>0,
\]
by Theorem~\ref{M4}, for sufficiently small $b_0>0$, the perturbed vector field $Z_b(x,y)$ admits an unique crossing limit cycle in a neighborhood of the origin, located at
\begin{equation}\label{eq:genlocation}
	x(b)=x_0 |b|^\lambda+o\bigl(|b|^\lambda\bigl), \quad x_0>0, \quad 0<\lambda\leqslant1, \quad 0<\mu b<b_0.
\end{equation}
The period of the crossing limit cycle is then given by 
\[
	\mathcal{T}(b)=-\delta\left(\tau^{+}\bigl(x(b)\bigr)-\tau^{-}\bigl(x(b)\bigr)\right).
\]
Substituting the expressions for $\tau^{\pm}(x)$ in each case yields the behaviors~\eqref{behaviors}.

We remark that the leading term in the asymptotic expansion of the period $\mathcal{T}(b)$ strongly depends on which components constitute the vector field $Z(x,y)$. More precisely, if for any component the origin is a cusp, then~\eqref{beh:e} holds. Else, if $Z(x,y)$ has a nilpotent center/focus, then~\eqref{beh:d} holds. Otherwise, if $Z(x,y)$ has a polycycle satisfying $(vi)$ or $(vii)$, then~\eqref{beh:c}. If these are not the case and $Z(x,y)$ has an elementary center/focus at the origin, or a periodic orbit satisfying $(v)$, for any component, then we have~\eqref{beh:b}. Behavior~\eqref{beh:a} can only be observed when the origin is a $(2k^{+},2k^{-})$-monodromic tangential singularity of $Z(x,y)$. We summarize this discussion into Table~\ref{Table1}.

Regarding the position of the limit cycle, if neither components of $Z(x,y)$ is a cusp nor a polycycle, then we have from Theorem~\ref{M3} that 
\[
	x(b)\sim x_0|b|^\frac{1}{n}, \, n\in\mathbb{N}.
\]
In case of a polycycle, it is clear that we must use Theorem~\ref{M4}. In case of a cusp, it follows from Corollary~\ref{corol:4cusp} that the half-monodromic map is of Dulac type, with leading exponent always equal to $1$. Hence, we again use Theorem~\ref{M4}. Notice that if one of the components is a polycycle and the other is not a cusp, then we have from Remark~\ref{R9} that~\eqref{eq:genlocation} is realizable for any $\lambda\in(0,1]$. On the other hand, if one of the components is a cusp, then it follows from~\eqref{52} that $\lambda=1$. 

We now provide the proof of Table~\ref{Table1} only for the fold-fold case. The others shall be clear after.

\begin{proposition}
	Let $Z(x,y)$ be given by~\eqref{eq:Z} such that the origin is an invisible fold-fold singularity of order $(2k^+,2k^-)$. Then, for family $Z_b$ given in~\eqref{2}, the position $x(b)$ and the period $\mathcal{T}(b)$ of the crossing limit cycle that unfolds from the pseudo-Hopf bifurcation satisfy
	\[
		x(b)\sim x_0|b|^\frac{1}{2n}, \quad \mathcal{T}(b)\sim T_0 |b|^\frac{1}{2n},
	\] with $x_0>0$, $T_0>0$, and $n\in\mathbb{N}$.
\end{proposition}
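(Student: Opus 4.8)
The plan is to reduce the statement to the fold computations of Corollary~\ref{corol:1fold}, extract the position from Theorem~\ref{M3}, and read off the period from the flight-time expansions. First I would observe that the origin is an invisible fold of even multiplicity for $Z^+$ and, after the change $(x,y)\mapsto(x,-y)$, also for $Z^-$; hence Corollary~\ref{corol:1fold} applies to both components, giving analytic half-return maps and flight times
\[
  \varphi^{\pm}(x)=-x+\sum_{i\geqslant2}\alpha_i^{\pm}x^i,\qquad \tau^{\pm}(x)=\hat{T}_0^{\pm}x+\sum_{i\geqslant1}\hat{T}_i^{\pm}x^{i+1},\qquad \hat{T}_0^{\pm}P^{\pm}(0,0)<0.
\]
In particular $\alpha_1^{\pm}=-1$, so $V_1=\delta(\alpha_1^+-\alpha_1^-)=0$; thus hypothesis~\ref{H2'} holds with $k=\infty$, and, assuming the genericity condition that some $V_i\neq0$ (otherwise the origin is a center and nothing bifurcates), there is a minimal $N\geqslant2$ with $V_N\neq0$. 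Theorem~\ref{M3} then already yields, for $0<\mu b<b_0$, a unique hyperbolic crossing limit cycle near the origin with
\[
  x(b)=\sqrt[N]{\tfrac{2}{|V_N|}}\,\sqrt[N]{|b|}+o\bigl(\sqrt[N]{|b|}\,\bigr),
\]
since $1-\alpha_1^+=2$.

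The key step — and the one I expect to be the main obstacle — is to show that $N$ is \emph{even}, say $N=2n$. I would establish (or recall, since it is classical for fold points) that the invisible-fold half-return map extends analytically through $0$ to an involution: a single $Z^{\pm}$-orbit arc joins $(x,0)$ to $(\varphi^{\pm}(x),0)$ and is traversed by the flow in both directions, so the germ $\varphi^{\pm}$ at $0$ satisfies $\varphi^{\pm}\circ\varphi^{\pm}=\mathrm{id}$. Imposing this identity on $\varphi^{\pm}(x)=-x+\sum_{i\geqslant2}\alpha_i^{\pm}x^i$ and comparing the coefficient of $x^N$ for \emph{odd} $N\geqslant3$, one finds it equals $-2\alpha_N^{\pm}+Q_N(\alpha_2^{\pm},\dots,\alpha_{N-1}^{\pm})$ for a universal polynomial $Q_N$ (for instance $\alpha_3^{\pm}=-(\alpha_2^{\pm})^2$); since this must vanish, $\alpha_N^{\pm}$ is determined by the lower-order coefficients. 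Consequently, if the minimal index $N$ with $\alpha_N^+\neq\alpha_N^-$ were odd, then the equality $\alpha_i^+=\alpha_i^-$ for $2\leqslant i\leqslant N-1$ would force $\alpha_N^+=\tfrac12 Q_N(\alpha_2^+,\dots,\alpha_{N-1}^+)=\tfrac12 Q_N(\alpha_2^-,\dots,\alpha_{N-1}^-)=\alpha_N^-$, a contradiction. Hence $N=2n$, and
\[
  x(b)=x_0\,|b|^{\frac{1}{2n}}+o\bigl(|b|^{\frac{1}{2n}}\bigr),\qquad x_0:=\sqrt[2n]{\tfrac{2}{|V_{2n}|}}>0.
\]
An equivalent route is to quote that an analytic involution with derivative $-1$ at a fixed point is analytically conjugate to $x\mapsto-x$; the jet bookkeeping above is lighter and self-contained.

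For the period I would use the formula $\mathcal{T}(b)=-\delta\bigl(\tau^+(x(b))-\tau^-(x(b))\bigr)$ from Section~\ref{Sec4}. Substituting $\tau^{\pm}(x)=\hat{T}_0^{\pm}x+O(x^2)$ together with $x(b)=x_0|b|^{1/2n}+o(|b|^{1/2n})$ gives
\[
  \mathcal{T}(b)=-\delta\bigl(\hat{T}_0^+-\hat{T}_0^-\bigr)x_0\,|b|^{\frac{1}{2n}}+o\bigl(|b|^{\frac{1}{2n}}\bigr).
\]
The leading coefficient is nonzero: by~\ref{H1} we have $\tau^+(x)\tau^-(x)<0$ for small $x>0$, so $\hat{T}_0^+$ and $\hat{T}_0^-$ have opposite signs and $\hat{T}_0^+-\hat{T}_0^-\neq0$; moreover, since $\delta=1$ exactly when $\hat{T}_0^+<0<\hat{T}_0^-$ (and symmetrically for $\delta=-1$), one checks that $T_0:=-\delta(\hat{T}_0^+-\hat{T}_0^-)x_0>0$. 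This yields $\mathcal{T}(b)\sim T_0|b|^{1/2n}$ with $x_0,T_0>0$ and $n\in\mathbb{N}$, as claimed.

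In summary, the only substantive point is the parity statement of the second paragraph — the involutive structure of the fold half-return map and the fact that its odd-order jet is slaved to the even-order jet — which is precisely what forces the leading exponent to be the reciprocal of an even integer; the rest is substitution into Corollary~\ref{corol:1fold} and Theorems~\ref{M1} and~\ref{M3}. I would expect the multiplicities $2k^{\pm}$ of the two folds to play no role in the exponent: $n$ is governed by the first nonvanishing Lyapunov quantity of the displacement function, not by the individual fold orders.
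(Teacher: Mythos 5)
Your proposal is correct and reaches the same endpoint as the paper's proof, but it replaces the paper's key citation with a self-contained argument. The paper establishes the evenness of the minimal index $N$ with $V_N\neq0$ simply by invoking Theorem~B of Novaes--Silva~\cite{NovLSilvaJDE2021}, whereas you derive it directly from the involutive structure of the invisible-fold half-return maps: since a single orbit arc in $\Sigma^\pm$ joins $(x,0)$ to $(\varphi^\pm(x),0)$, the analytic extension of $\varphi^\pm$ through $0$ (guaranteed by Corollary~\ref{corol:1fold}, since $p=1$) satisfies $\varphi^\pm\circ\varphi^\pm=\mathrm{id}$, and comparing coefficients of $x^N$ for odd $N$ in this identity shows the odd jet is a universal polynomial in the lower-order coefficients, so the first index where $\alpha^+_i$ and $\alpha^-_i$ can disagree is even. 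This buys a proof that is internal to the paper's framework (no need to trust the external Lyapunov-constant machinery) at the cost of a small amount of jet bookkeeping; it also makes transparent, as you note, that the exponent $n$ is governed by the displacement function rather than by the individual fold multiplicities $k^\pm$. One further small improvement: you explicitly check the sign constraint $T_0=-\delta(\hat T_0^+-\hat T_0^-)x_0>0$ via the relation between $\delta$ and the signs of $\hat T_0^\pm$ forced by~\ref{H1}, whereas the paper simply writes $|T_0^+-T_0^-|$ and leaves the verification implicit. Everything else --- the use of Corollary~\ref{corol:1fold} for $\varphi^\pm$ and $\tau^\pm$, the substitution $\alpha_1^+=-1$ giving $x_0=\sqrt[2n]{2/|V_{2n}|}$ via Theorem~\ref{M3}, and the period formula $\mathcal T(b)=-\delta(\tau^+(x(b))-\tau^-(x(b)))$ --- matches the paper's proof exactly.
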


\begin{proof}
By Corollary~\ref{corol:1fold}, the displacement function is an analytic function given by 
\[
	\Delta_0(x)=\delta(\varphi^+(x)-\varphi^{-}(x))=\sum_{i=2}^{\infty}V_ix^i.
\]
Since~\ref{H1} holds, there is $N\geqslant2$ such that $V_N\neq 0$ and $V_i=0$ for $i\leqslant N-1$. It is known~\cite[Theorem B]{NovLSilvaJDE2021} that $N$ must be an even number. Hence, let $N=2n$. Since each $\varphi^{\pm}(x)$ is analytic, by Theorem~\ref{M3}, for sufficiently small $|b|$, $Z_b$ admits exactly one crossing limit cycle whose position is given by 
\[
	x(b)=\sqrt[N]{\frac{1-\alpha_1^+}{|V_N|}}|b|^\frac{1}{N}+O\left(\sqrt[N]{|b|}^2\right)\sim \sqrt[N]{\frac{2}{|V_N|}}|b|^\frac{1}{2n},
\]
where on the last passage we have used $\alpha_1^+=-1$ (recall Corollary~\ref{corol:1fold}). The period of the crossing limit cycle is then given by $\mathcal{T}(b)=-\delta\left(\tau^{+}(x(b))-\tau^{-}(x(b))\right)$ where $\tau^{\pm}$ are the flight times associated to $Z^{\pm}$ respectively. Again, by Corollary \ref{corol:1fold}, we have
\[
	\mathcal{T}(b)=-\delta(T_0^{+}-T_0^{-})\sqrt[N]{\frac{2}{|V_N|}}|b|^\frac{1}{N}+O\left(\sqrt[N]{|b|}^2\right)\sim\big|T_0^{+}-T_0^{-}\big|\sqrt[N]{\frac{2}{|V_N|}}|b|^\frac{1}{2n},
\]
and the result follows.
\end{proof}

\appendix

\section{Weak versions of the Implicit Function Theorem}\label{App}

The classical Implicit Function Theorem, which assumes $C^1$-regularity, is well known. In contrast, some of its weaker variants may not be as widely recognized. Bellow we present some of them. Their proofs can be found at~\cite{IFT}.

\begin{theorem}[Differentiable Implicit Function Theorem]\label{IFT1}
Let $K=\mathbb{R}$ or $K=\mathbb{C}$, let $X$ be an open subset of $K^n$ and $Y$ be an open subset of $K^k$. Suppose $(\overline{x},\overline{y})\in X\times Y$ and $f\colon X\times Y\to K^k$. Suppose that:
\begin{enumerate}
	\item[$(1a)$] $f(\overline{x},\overline{y}) = 0$;
	\item[$(1b)$] $f(x,\cdot)$ is continuous on $Y$, for all $x\in X$;
	\item[$(1c)$] $f$ is differentiable at $(\overline{x},\overline{y})$;
	\item[$(1d)$] $\partial_y f(\overline{x},\overline{y})$ is surjective, i.e.
	\[
	\det\begin{pmatrix}
		\displaystyle \frac{\partial f^1}{\partial y_1}(\overline{x},\overline{y}) & \cdots &\displaystyle \frac{\partial f^1}{\partial y_k}(\overline{x},\overline{y}) \\
		\vdots & \ddots & \vdots \\
		\displaystyle\frac{\partial f^k}{\partial y_1}(\overline{x},\overline{y}) & \cdots &\displaystyle \frac{\partial f^k}{\partial y_k}(\overline{x},\overline{y})
	\end{pmatrix}
	\neq0
	\]	
\end{enumerate}
Then:
\begin{enumerate}
	\item[(a)] There exists an open neighborhood $X_0\times Y_0\subseteq K^n\times K^k$ of $(\overline{x},\overline{y})$ and a function $\phi\colon X_0\to Y_0$ such that
	\begin{enumerate}
		\item[$(2a)$] $f\bigl(x,\phi(x)\bigr)=0$, for all $x\in X_0$,
		\item[$(2b)$] $\phi(\overline{x})=\overline{y}$.
	\end{enumerate}
	
	\item[(b)] For all such sets $X_0$, $Y_0$ and all functions $\phi\colon X_0\to Y_0$ satisfying $(a)$, the function $\phi$ is differentiable at $\overline{x}$, with
	\[
		\phi'(\overline{x}) = -\bigl[\partial_yf(\overline{x},\overline{y})\bigr]^{-1}\partial_xf(\overline{x},\overline{y}).
	\]	
	\item[$(c)$] If $(1b,1c,1d)$ are replaced by the stronger conditions:
	\begin{itemize}
		\item[] $f$ is differentiable on $X\times Y$,
		\item[] $\partial_yf$ is surjective on $X\times Y$,
	\end{itemize}
	then a neighborhood $X_0\times Y_0$ of $(\overline{x},\overline{y})$ as in part $(a)$ can be chosen so that:
	\begin{itemize}
		\item[$(i)$] There is a unique function $\phi\colon X_0\to Y_0$ satisfying $(a)$;
		\item[$(ii)$] $\phi$ is differentiable on $X_0$, with
		\[
			\phi'(x) = -\bigl[\partial_yf\bigl(x,\phi(x)\bigr)\bigr]^{-1}\partial_xf\bigl(x,\phi(x)\bigr).
		\]
	\end{itemize}	
	\item[$(d)$] If $f$ is $C^r$ on $X\times Y$ for some $r\geqslant1$, then there is a neighborhood $X_1\times Y_1$ of $(\overline{x},\overline{y})$ and a unique function $\phi\colon X_1 \to Y_1$ satisfying $(2a,2b)$, and $\phi$ is $C^r$ on $X_1$.
\end{enumerate}
\end{theorem}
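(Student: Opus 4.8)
The plan is to recast the problem as a fixed-point equation. The crucial feature is that only \emph{pointwise} differentiability of $f$ is available at $(\overline{x},\overline{y})$, so the usual Banach contraction proof of the implicit function theorem is out of reach; existence will instead come from a topological fixed-point theorem, and differentiability of the solution from an a priori linear estimate. After a translation I may assume $\overline{x}=0$, $\overline{y}=0$, and after replacing $f$ by $[\partial_yf(0,0)]^{-1}f$ --- which changes neither the zero set of $f(x,\cdot)$ nor hypotheses $(1a)$--$(1d)$ --- I may assume $L:=\partial_yf(0,0)$ is the identity. Writing $M:=\partial_xf(0,0)$, hypotheses $(1a)$ and $(1c)$ give $f(x,y)=Mx+y+R(x,y)$ with $R(x,y)=o(|x|+|y|)$, so that $f(x,y)=0$ is equivalent to $y=g(x,y):=y-f(x,y)=-Mx-R(x,y)$.

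For part $(a)$ I would first fix $\rho>0$ small enough that $|R(x,y)|\leqslant\tfrac12(|x|+|y|)$ on $|x|+|y|\leqslant2\rho$, and then pick $\rho'\in(0,\rho)$ so small that $|g(x,y)|\leqslant\|M\|\,|x|+\tfrac12(|x|+|y|)\leqslant\rho$ whenever $|x|\leqslant\rho'$ and $|y|\leqslant\rho$; in other words $g(x,\cdot)$ maps the closed ball $\overline{B}(0,\rho)$ into itself. By $(1b)$ this map is continuous, and $\overline{B}(0,\rho)\subseteq K^k$ is compact and convex (identifying $\mathbb{C}^k$ with $\mathbb{R}^{2k}$ when $K=\mathbb{C}$), so Brouwer's fixed-point theorem supplies a fixed point. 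Choosing one for each $x\in X_0:=B(0,\rho')$, and setting $\phi(0):=0$, produces $\phi\colon X_0\to Y_0:=B(0,\rho)$ with $f(x,\phi(x))\equiv0$; at this stage no continuity of $\phi$ is claimed.

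For part $(b)$, take any $\phi$ as furnished by $(a)$, with $Y_0$ a small ball about the origin. Substituting into $f(x,\phi(x))=0$ yields $\phi(x)=-Mx-R(x,\phi(x))$, and the a priori bound $|R(x,\phi(x))|\leqslant\tfrac12(|x|+|\phi(x)|)$ then forces $|\phi(x)|\leqslant(2\|M\|+1)|x|$ for all small $x$; in particular $\phi$ is continuous at $0$. Feeding $|\phi(x)|=O(|x|)$ back into the $o$-estimate gives $R(x,\phi(x))=o(|x|)$, whence $\phi(x)=-Mx+o(|x|)$. Thus $\phi$ is differentiable at $0$ with $\phi'(0)=-M$, which upon undoing the normalisation is exactly $\phi'(\overline{x})=-[\partial_yf(\overline{x},\overline{y})]^{-1}\partial_xf(\overline{x},\overline{y})$.

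Parts $(c)$ and $(d)$ are then obtained by bootstrapping. Under the global hypotheses of $(c)$, differentiability of $f$ at every point with $\partial_yf$ invertible there makes $y\mapsto f(x,y)$ a local homeomorphism near each of its zeros; combined with the confinement $|\phi(x)-\overline{y}|=O(|x-\overline{x}|)$ from $(b)$ and a connectedness argument, this pins down a unique $\phi$ on a possibly smaller $X_0$, after which applying $(b)$ with an arbitrary point of $X_0$ in place of $\overline{x}$ shows $\phi$ is differentiable on all of $X_0$ with the stated derivative formula. Part $(d)$ is the classical $C^r$ implicit function theorem, which one also recovers from the preceding by differentiating $f(x,\phi(x))\equiv0$ and inducting on $r$. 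I expect the genuine difficulty to lie in part $(a)$ and in the a priori estimate of part $(b)$: with no continuous derivative at hand, existence must be wrung from Brouwer's theorem rather than iteration, and it is precisely the linear bound $|\phi(x)-\overline{y}|=O(|x-\overline{x}|)$ that upgrades solvability to differentiability.
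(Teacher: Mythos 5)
The paper does not actually prove this theorem: it is quoted verbatim from the external reference \cite{IFT} (``Their proofs can be found at~\cite{IFT}'') and then invoked as a black box in the proofs of Proposition~\ref{M2} and Theorems~\ref{M3} and~\ref{M4}, so there is no internal argument to compare yours against. Judged on its own, your sketch of parts (a) and (b) is essentially correct and puts the emphasis exactly where the difficulty lies: with only pointwise differentiability at $(\overline{x},\overline{y})$ there is no control on $\partial_yf$ nearby, so the Banach contraction route is closed, and existence must come from Brouwer on the compact convex ball $\overline{B}(\overline{y},\rho)$; differentiability of $\phi$ at $\overline{x}$ then falls out of the a priori linear confinement $|\phi(x)-\overline{y}|\leqslant(2\|M\|+1)\,|x-\overline{x}|$. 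One quantifier in (b) needs care: taken literally, ``for all such $X_0,Y_0$'' would allow a large $Y_0$, for which a $\phi$ can hop between disconnected zero branches of $f(x,\cdot)$ and fail even to be continuous at $\overline{x}$. The statement must be read with $Y_0$ shrinkable, and your parenthetical ``with $Y_0$ a small ball'' rightly concedes this.

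The genuine gap is in part (c). You assert that differentiability of $f$ on $X\times Y$ together with invertibility of $\partial_yf$ everywhere makes $y\mapsto f(x,y)$ a local homeomorphism near each of its zeros. For $K=\mathbb{R}$ and $k=1$ this follows from Darboux (a nowhere-vanishing derivative has constant sign, hence the map is strictly monotone), and for $K=\mathbb{C}$, $k=1$ from holomorphy. But for $K=\mathbb{R}$ and $k>1$, local injectivity of a \emph{merely differentiable} map with everywhere-nonsingular derivative is itself a nontrivial theorem, not a consequence of the pointwise first-order expansion, which only yields that each zero of $f(x,\cdot)-f(x,y_0)$ is an \emph{isolated} point. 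Without that input, the ``connectedness argument'' you gesture at has no traction, and uniqueness in (c)(i) remains unproved. Part (d) is the classical $C^r$ implicit function theorem and is fine.
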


\begin{theorem}[Continuous Implicit Function Theorem]\label{IFT2}
	With $K=\mathbb{R}$ or $K=\mathbb{C}$, let $X$ be an open subset of $K^n$ and $Y$ be an open subset of $K^k$. Suppose $(\overline{x}, \overline{y})\in X\times Y$ and $f\colon X\times Y\to K^k$. Suppose that:
	\begin{enumerate}
		\item[$(1a)$] $f(\overline{x}, \overline{y}) = 0$;
		\item[$(1b)$] $f(\cdot,\cdot)$ is continuous $X\times Y$;
		\item[$(1c)$] $f(\overline{x},\cdot)$ is differentiable at $\overline{y}$;
		\item[$(1d)$] $\partial_y f(\overline{x}, \overline{y})$ is invertible, i.e.
		\[
		\det\begin{pmatrix}
			\displaystyle \frac{\partial f^1}{\partial y_1}(\overline{x},\overline{y}) & \cdots &\displaystyle \frac{\partial f^1}{\partial y_k}(\overline{x},\overline{y}) \\
			\vdots & \ddots & \vdots \\
			\displaystyle\frac{\partial f^k}{\partial y_1}(\overline{x},\overline{y}) & \cdots &\displaystyle \frac{\partial f^k}{\partial y_k}(\overline{x},\overline{y})
		\end{pmatrix}
		\neq0
		\]	
	\end{enumerate}	
	Then:
	\begin{enumerate}
		\item[$(a)$] There exists an open neighborhood $X_0 \times Y_0 \subseteq K^n \times K^k$ of $(\overline{x}, \overline{y})$ and a function $\phi \colon X_0 \to Y_0$ such that
		\begin{enumerate}
			\item[$(2a)$] $f\bigl(x,\phi(x)\bigr)=0$, for all $x\in X_0$,
			\item[$(2b)$] $\phi(\overline{x})=\overline{y}$.
		\end{enumerate}
		\item[$(b)$] The neighborhood $X_0\times Y_0$ in $(a)$ can be chosen so that there exists at least one function $\phi\colon X_0\to Y_0$ satisfying $(2a,2b)$, and every such function is continuous at $\overline{x}$.
		\item[$(c)$](Goursat) If assumption $(1c)$ is replaced by the stronger condition:
		\begin{itemize}
			\item[$(1e)$] $f(x,\cdot)$ is $C^1$ on $Y$ for all $x \in X$,
		\end{itemize}
		then there exists a neighborhood $X_1\times Y_1$ of $(\overline{x},\overline{y})$ and a unique function $\phi\colon X_1\to Y_1$ satisfying $(2a,2b)$, and $\phi$ is continuous on $X_1$.
	\end{enumerate}
\end{theorem}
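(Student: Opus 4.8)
The proof is classical; unlike the differentiable version (Theorem~\ref{IFT1}), no joint differentiability of $f$ is available, so one must route through a topological fixed-point theorem for parts $(a)$, $(b)$ and through the contraction principle for the stronger part $(c)$. I would begin with a normalization: translate $X$ and $Y$ so that $(\overline{x},\overline{y})=(0,0)$, and replace $f$ by $[\partial_yf(0,0)]^{-1}f$, which is legitimate by $(1d)$, preserves the zero set, and preserves all the hypotheses; thus we may assume $\overline{x}=0$, $\overline{y}=0$ and $\partial_yf(0,0)=\mathrm{Id}_k$. By $(1c)$ we then write $f(0,y)=y+R(y)$ with $R(y)=o(|y|)$ as $y\to0$, and we fix $\rho>0$ with $\overline{B_\rho}\subset Y$ and $|R(y)|\le|y|/2$ for $|y|\le\rho$. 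In particular $|f(0,y)|\ge|y|/2>0$ for $0<|y|\le\rho$, so $0$ is the only zero of $f(0,\cdot)$ in $\overline{B_\rho}$.

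\emph{Existence and continuity at $\overline{x}$ (parts $(a)$, $(b)$).} Set $T(x,y):=y-f(x,y)$. On $\overline{B_\rho}$ one has $|T(0,y)|=|R(y)|\le\rho/2$, and by the joint continuity $(1b)$ together with compactness of $\overline{B_\rho}$ there is $\delta>0$ with $\overline{B_\delta}\subset X$ and $|f(x,y)-f(0,y)|<\rho/2$ for all $|x|\le\delta$, $|y|\le\rho$; hence $|T(x,y)|<\rho$ there, so $T(x,\cdot)$ maps the compact convex set $\overline{B_\rho}$ continuously into the open ball $B_\rho$. Brouwer's fixed point theorem then yields a fixed point $\phi(x)\in B_\rho$ satisfying $f(x,\phi(x))=0$; choosing one such value for each $x$ (with $\phi(0)=0$ forced by the previous paragraph) and taking $X_0:=B_\delta$, $Y_0:=B_\rho$ proves $(a)$. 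For $(b)$, let $\phi\colon X_0\to Y_0$ be any function with $f(x,\phi(x))\equiv0$ and $\phi(0)=0$, and let $0<\varepsilon<\rho$. Since $f(0,\cdot)$ is continuous and nonvanishing on the compact annulus $\{\varepsilon\le|y|\le\rho\}$ it is bounded below there by some $m>0$, and by $(1b)$ and compactness there is $0<\delta'\le\delta$ with $|f(x,y)-f(0,y)|<m$ on $\{|x|\le\delta'\}\times\{\varepsilon\le|y|\le\rho\}$; so $f(x,\cdot)$ has no zero in that annulus, which forces $|\phi(x)|<\varepsilon$ whenever $|x|<\delta'$. Hence every admissible $\phi$ is continuous at $0$.

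\emph{The $C^1$-in-$y$ case (part $(c)$).} Under $(1e)$ the fundamental theorem of calculus in the $y$ variable gives $f(x,y_1)-f(x,y_2)=M(x,y_1,y_2)\,(y_1-y_2)$ with $M(x,y_1,y_2):=\int_0^1\partial_yf\bigl(x,y_2+s(y_1-y_2)\bigr)\,ds$. Using that $y\mapsto\partial_yf(0,y)$ is continuous at $0$ with value $\mathrm{Id}_k$, together with $(1b)$ to absorb the $x$-dependence, one shrinks $\rho$ and $\delta$ so that $\|\mathrm{Id}_k-M(x,y_1,y_2)\|\le\tfrac12$ for $|x|\le\delta$, $|y_i|\le\rho$. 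Then $T(x,\cdot)$ is a $\tfrac12$-contraction of the complete space $\overline{B_\rho}$ into itself, so the Banach fixed point theorem provides, for each $|x|\le\delta$, a \emph{unique} zero $\phi(x)$ of $f(x,\cdot)$ in $\overline{B_\rho}$, with $\phi(0)=0$. Uniqueness of the zero forces any $\widetilde\phi\colon X_1\to Y_1$ satisfying $(2a)$--$(2b)$ to coincide with $\phi$ on $X_1:=B_\delta$, $Y_1:=B_\rho$; finally $\phi$ is continuous on $X_1$ by the standard estimate $|\phi(x)-\phi(x')|\le 2\,\bigl|f(x,\phi(x'))-f(x',\phi(x'))\bigr|$ combined with $(1b)$.

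\emph{Main obstacle.} The heart of the matter is the existence step in $(a)$: for $k\ge2$ there is no scalar intermediate-value substitute, so one genuinely needs Brouwer's theorem (equivalently, a Brouwer-degree argument: $|f(0,y)|\ge\rho/2$ on $\partial B_\rho$ gives $\deg(f(0,\cdot),B_\rho,0)=\deg(\mathrm{Id}_k,B_\rho,0)=1$ via the linear homotopy, and the admissible homotopy $t\mapsto f(tx,\cdot)$ propagates this to $\deg(f(x,\cdot),B_\rho,0)=1\neq0$). The resulting possible non-uniqueness is exactly why $(a)$ and $(b)$ only assert a continuous-at-$\overline x$ selection. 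The second delicate point is $(c)$: upgrading mere $C^1$-dependence in $y$ to a contraction estimate \emph{uniform} in $x$ near $\overline{x}$, without assuming joint continuity of $\partial_yf$, is Goursat's refinement, and this is where the normalization $\partial_yf(0,0)=\mathrm{Id}_k$ and the continuity of $\partial_yf(0,\cdot)$ are used essentially.
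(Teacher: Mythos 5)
First, a remark on the comparison itself: the paper does not prove Theorem~\ref{IFT2} — it is quoted from the reference~\cite{IFT} with the proof explicitly deferred there — so there is no in-paper argument to measure you against. On their own merits, your parts $(a)$ and $(b)$ are correct and follow the standard route for such weak implicit function theorems: after normalizing $\partial_yf(\overline{x},\overline{y})=\mathrm{Id}_k$, hypothesis $(1c)$ alone pins down $0$ as the unique zero of $f(\overline{x},\cdot)$ in a small closed ball, the map $y\mapsto y-f(x,y)$ sends that ball into its interior for $x$ near $\overline{x}$ by $(1b)$ and compactness, Brouwer supplies a zero, and your annulus argument correctly yields continuity at $\overline{x}$ of \emph{every} solution branch.

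Part $(c)$, however, has a genuine gap at the decisive step: the claim that one can ``shrink $\rho$ and $\delta$ so that $\|\mathrm{Id}_k-M(x,y_1,y_2)\|\leqslant\tfrac12$ for $|x|\leqslant\delta$, $|y_i|\leqslant\rho$'' does not follow from $(1b)$ and $(1e)$. Hypothesis $(1e)$ controls $\partial_yf(x,\cdot)$ only for each \emph{fixed} $x$, with no uniformity as $x\to\overline{x}$, and joint continuity of $f$ gives no information whatsoever about $\partial_yf(x,y)$ for $x\neq\overline{x}$. The failure is not cosmetic. Take $k=n=1$ and $f(x,y)=y-2x\sin(y/x)$ for $x\neq0$, $f(0,y)=y$. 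Then $(1a)$, $(1b)$, $(1d)$, $(1e)$ all hold: $|f(x,y)-y|\leqslant2|x|$ gives joint continuity, $\partial_yf(x,y)=1-2\cos(y/x)$ is continuous in $y$ for each fixed $x$, and $\partial_yf(0,0)=1$. Yet $\partial_yf(x,\pi x)=3$ for every $x\neq0$, so no contraction constant is available on any bicylinder around the origin; worse, $f(x,\cdot)$ vanishes at $y=0$ and at $y=\pm u^{*}x$, where $u^{*}\approx1.896$ solves $u=2\sin u$, so every neighborhood of the origin contains several (even continuous) solution branches and the uniqueness asserted in $(c)$ fails outright under the hypotheses as written. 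In other words, the step you wave through cannot be repaired without strengthening $(1e)$ to the classical Goursat hypothesis that $\partial_yf$ be continuous on $X\times Y$ jointly (or at least that $\partial_yf(x,\cdot)\to\partial_yf(\overline{x},\cdot)$ uniformly near $\overline{y}$); under that hypothesis your contraction argument, the uniqueness, and the final continuity estimate $|\phi(x)-\phi(x')|\leqslant2\,|f(x,\phi(x'))-f(x',\phi(x'))|$ all go through. You should either prove $(c)$ under that corrected hypothesis or flag the discrepancy with the statement.
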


For a general version of the Implicit Function Theorem, that somewhat merges the above two theorems, and for a weak version of the Inverse Function Theorem, we also refer to~\cite{IFT}.

\section{The inverse of Dulac-type functions}

Given $\alpha\in\mathbb{C}$ and $k\in\mathbb{Z}_{\geqslant0}$, the \emph{generalized binomial coefficient} is given by
\begin{equation}\label{0}         
	\binom{\alpha}{k}=\frac{\alpha(\alpha-1)\dots(\alpha-k+1)}{k!},
\end{equation}
with the convention $\binom{\alpha}{0}=1$. Observe that if $\alpha\in\mathbb{Z}_{\geqslant k}$, then \eqref{0} reduces to the usual binomial coefficient.

\begin{theorem}[Generalized Binomial Theorem]\label{GBT}
	Let $x$, $y$, $\alpha\in\mathbb{C}$ such that $|x|>|y|$. Then 
	\[(x+y)^\alpha=\sum_{k=0}^{\infty}\binom{\alpha}{k}x^{\alpha-k}y^k.\]
\end{theorem}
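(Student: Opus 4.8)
The plan is to read the identity off the Taylor expansion of a suitable holomorphic function; the only delicate point is that, for $\alpha\in\mathbb{C}$, the symbols $(x+y)^\alpha$ and $x^{\alpha-k}$ are multivalued, so I would first fix a branch. Since $|y|<|x|$ forces $x\neq0$, the open disc $D:=\{w\in\mathbb{C}:|w-x|<|x|\}$ does not contain the origin and, being a disc, is simply connected; hence it carries a holomorphic branch $L$ of $\log$. Setting $w^\gamma:=e^{\gamma L(w)}$ for $\gamma\in\mathbb{C}$ and $w\in D$, every power of $x$ and of $x+y$ occurring in the statement is to be read with respect to $L$ (note $x\in D$, and $x+y\in D$ precisely when $|y|<|x|$); the resulting identity does not depend on the choice of $L$, because replacing $L$ by $L+2\pi i$ multiplies both sides by $e^{2\pi i\alpha}$.

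Next I would set $F(y):=(x+y)^\alpha=e^{\alpha L(x+y)}$, holomorphic on the disc $\{y\in\mathbb{C}:|y|<|x|\}$. Since $L'(w)=1/w$, differentiating gives $F'(y)=\frac{\alpha}{x+y}\,F(y)=\alpha(x+y)^{\alpha-1}$ and, by induction, $F^{(k)}(y)=\alpha(\alpha-1)\cdots(\alpha-k+1)\,(x+y)^{\alpha-k}$ for every integer $k\geqslant0$. Evaluating at $y=0$ and dividing by $k!$ shows, via the definition~\eqref{0}, that the $k$-th Taylor coefficient of $F$ at the origin is exactly $\binom{\alpha}{k}\,x^{\alpha-k}$. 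Since a function holomorphic on a disc coincides there with its Taylor series, I conclude $(x+y)^\alpha=\sum_{k\geqslant0}\binom{\alpha}{k}x^{\alpha-k}y^k$ for all $y$ with $|y|<|x|$, which is the assertion.

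Finally I would remark that the hypothesis $|x|>|y|$ is sharp: when $\alpha\notin\mathbb{Z}_{\geqslant0}$ the series has radius of convergence (in $y$) exactly $|x|$, as one sees from the ratio test, $|\alpha-k|/(k+1)\to1$, or from the branch point of $F$ at $y=-x$; when $\alpha\in\mathbb{Z}_{\geqslant0}$ the sum terminates and the identity is the classical binomial formula, valid without restriction. A self-contained alternative, bypassing Taylor's theorem, is to check that $g(z):=\sum_{k\geqslant0}\binom{\alpha}{k}z^k$ solves the initial value problem $(1+z)\,g'(z)=\alpha\,g(z)$, $g(0)=1$, on $\{|z|<1\}$ — the one algebraic step being $(k+1)\binom{\alpha}{k+1}=(\alpha-k)\binom{\alpha}{k}$ — and that $(1+z)^\alpha$ solves the same problem, so the two coincide; the stated form then follows by putting $z=y/x$ and multiplying by $x^\alpha$ (all powers with respect to $L$). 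There is no real obstacle in any of this: the substance lies entirely in the branch bookkeeping of the first paragraph, after which the argument is routine.
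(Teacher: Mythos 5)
Your proof is correct. Note, however, that the paper does not supply its own proof of Theorem~\ref{GBT}; it defers to \cite[Appendix~$A$]{AraSan2025}, so there is no in-text argument to compare against. Your complex-analytic route — fixing a holomorphic branch $L$ of $\log$ on the origin-avoiding simply connected disc $D=\{w:|w-x|<|x|\}$, observing that both $x$ and $x+y$ lie in $D$, differentiating $F(y)=e^{\alpha L(x+y)}$ to get $F^{(k)}(0)=\alpha(\alpha-1)\cdots(\alpha-k+1)\,x^{\alpha-k}$, and invoking the Taylor expansion of a function holomorphic on a disc — is a complete and standard proof, and the branch-independence check is the right thing to worry about. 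One small inaccuracy: two branches of $\log$ on a simply connected domain omitting $0$ differ by $2\pi i n$ for some fixed $n\in\mathbb{Z}$, not just by $2\pi i$; your cancellation still goes through verbatim with $n$ in place of $1$, since $k\in\mathbb{Z}$ gives $e^{2\pi i n(\alpha-k)}=e^{2\pi i n\alpha}$, so this is only a slip of wording. The ODE alternative you sketch, showing $g(z)=\sum_{k\geqslant0}\binom{\alpha}{k}z^k$ and $(1+z)^\alpha$ both solve $(1+z)g'=\alpha g$, $g(0)=1$, on $|z|<1$ via the identity $(k+1)\binom{\alpha}{k+1}=(\alpha-k)\binom{\alpha}{k}$, is equally valid and somewhat more self-contained since it bypasses Taylor's theorem; either would suffice for the paper's purposes.
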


For a proof of Theorem~\ref{GBT}, see~\cite[Appendix~$A$]{AraSan2025}.

\begin{lemma}\label{L1}
	Let $D\colon[0,\delta)\to\mathbb{R}$, $\delta>0$, be a Dulac-type function given by
	\begin{equation}\label{38}
		D(x)=\alpha x^r+\mathcal{R}(x),
	\end{equation}
	with $r$, $\ell>0$, $\alpha\in\mathbb{R}\setminus\{0\}$ and $\mathcal{R}\in\mathcal{F}^1_{r+\ell}$. Then $D$ is a injection onto its image $I\subset\mathbb{R}$ and $D^{-1}\colon I\to[0,\delta)$ is given by
	\[
		D^{-1}(y)=\kappa|y|^\rho+\mathcal{V}(y),
	\]
	with $\kappa=|\alpha|^{-\rho}$, $\rho=r^{-1}$, $\mathcal{V}\in\mathcal{F}^1_{\rho+\eta}$, and $\eta>0$. 
\end{lemma}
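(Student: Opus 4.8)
The plan is to reduce to the case $\alpha>0$, show that $D$ is a strictly increasing $C^1$-diffeomorphism on a one-sided neighborhood of $0$, and then extract the leading term of $D^{-1}$ from the implicit identity $x=\kappa\bigl(y-\mathcal{R}(x)\bigr)^\rho$, converting estimates in the variable $x$ into estimates in the variable $y$ by means of the two-sided comparison $x\asymp y^\rho$. First I would dispose of the sign of $\alpha$: if $\alpha<0$, then since $|\mathcal{R}(x)|\leqslant c\,x^{r+\ell}=o(x^r)$ the function $D$ is negative for small $x>0$, so its image lies in $(-\infty,0]$; applying the result to $\widehat D:=-D$ (whose leading coefficient $|\alpha|$ is positive and whose flat part $-\mathcal{R}$ again lies in $\mathcal{F}^1_{r+\ell}$), writing $\widehat D^{-1}(z)=\kappa z^\rho+\widehat{\mathcal{V}}(z)$, and setting $\mathcal{V}(y):=\widehat{\mathcal{V}}(-y)$ recovers the statement for $D$, the flatness bounds for $\mathcal{V}$ being immediate from those for $\widehat{\mathcal{V}}$. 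So from now on assume $\alpha>0$. Since $\mathcal{R}\in\mathcal{F}^1_{r+\ell}$ gives $\mathcal{R}(x)\to0$ as $x\to0^+$ and $|\mathcal{R}'(x)|\leqslant c\,x^{r+\ell-1}$, I would write $D'(x)=\alpha r\,x^{r-1}+\mathcal{R}'(x)=\alpha r\,x^{r-1}\bigl(1+O(x^\ell)\bigr)$, which is strictly positive for $x\in(0,\delta)$ after shrinking $\delta$; together with $D(0)=0$ this makes $D$ a strictly increasing homeomorphism of $[0,\delta)$ onto $I=[0,D(\delta))$ and a $C^1$-diffeomorphism on the interiors, so $D^{-1}$ is well defined and $C^1$ on $(0,D(\delta))$.

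Next I would set up the scale comparison. Writing $y=D(x)=\alpha x^r\bigl(1+u(x)\bigr)$ with $u(x):=\mathcal{R}(x)/(\alpha x^r)$ and $|u(x)|\leqslant C x^\ell$, shrinking $\delta$ so that $|u|\leqslant\tfrac12$ gives $\tfrac12\alpha x^r\leqslant y\leqslant 2\alpha x^r$, hence $x\asymp y^\rho$ for $x$ (equivalently $y$) small, where $\rho:=1/r$. The defining equation rearranges to $\alpha x^r=y-\mathcal{R}(x)$, i.e. $x=\kappa\bigl(y-\mathcal{R}(x)\bigr)^\rho$ with $\kappa:=\alpha^{-\rho}$, so that, with $x=D^{-1}(y)$,
\[
	\mathcal{V}(y):=D^{-1}(y)-\kappa y^\rho=\kappa y^\rho\left[\left(1-\frac{\mathcal{R}(x)}{y}\right)^{\rho}-1\right].
\]
Because $|\mathcal{R}(x)/y|\leqslant C x^\ell\to 0$, the Generalized Binomial Theorem (Theorem~\ref{GBT}) --- or simply the mean value theorem --- yields $\bigl|(1-w)^\rho-1\bigr|\leqslant C|w|$ for small $|w|$, whence $|\mathcal{V}(y)|\leqslant C y^\rho x^\ell$; inserting $x\asymp y^\rho$ this becomes $|\mathcal{V}(y)|\leqslant C\,y^{\rho(1+\ell)}=C\,y^{\rho+\eta}$ with $\eta:=\rho\ell>0$, which is exactly the $\nu=0$ bound in the definition of $\mathcal{F}^1_{\rho+\eta}$.

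The first-order bound is where I expect the real work to be. Differentiating, $(D^{-1})'(y)=1/D'(x)$ with $x=D^{-1}(y)$, and the expansion of $D'$ from the first step gives $(D^{-1})'(y)=\tfrac{1}{\alpha r}\,x^{1-r}\bigl(1+O(x^\ell)\bigr)$. Separately, using $y=\alpha x^r\bigl(1+u(x)\bigr)$, the identity $r(\rho-1)=1-r$, and the Generalized Binomial Theorem once more, I would obtain $y^{\rho-1}=\alpha^{\rho-1}x^{1-r}\bigl(1+O(x^\ell)\bigr)$, hence $\kappa\rho\,y^{\rho-1}=\tfrac{1}{\alpha r}\,x^{1-r}\bigl(1+O(x^\ell)\bigr)$. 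Subtracting, the two principal terms $\tfrac{1}{\alpha r}x^{1-r}$ cancel, leaving
\[
	\mathcal{V}'(y)=(D^{-1})'(y)-\kappa\rho\,y^{\rho-1}=\frac{1}{\alpha r}\,x^{1-r}\,O(x^\ell)=O\bigl(x^{1-r+\ell}\bigr),
\]
and converting via $x\asymp y^\rho$ gives $|\mathcal{V}'(y)|\leqslant C\,y^{\rho(1-r+\ell)}=C\,y^{\rho+\eta-1}$, the $\nu=1$ bound of $\mathcal{F}^1_{\rho+\eta}$. Since $D^{-1}$ and $y\mapsto y^\rho$ are both $C^1$ on $(0,D(\delta))$, $\mathcal{V}$ is $C^1$ there, so these two estimates show $\mathcal{V}\in\mathcal{F}^1_{\rho+\eta}$.

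The main obstacle, then, is precisely this first-order cancellation: both $(D^{-1})'(y)$ and $\kappa\rho\,y^{\rho-1}$ are of order $x^{1-r}$, which is unbounded when $r>1$, so each must be expanded to relative precision $O(x^\ell)$ before subtracting --- knowing only their leading order is insufficient. A secondary technical point is to keep every $O(\cdot)$ uniform on a genuine fixed one-sided neighborhood of $0$, which is exactly what the explicit constants in Definition~\ref{Def1} provide. One could alternatively avoid $(D^{-1})'=1/D'$ by differentiating the relation $x=\kappa\bigl(y-\mathcal{R}(x)\bigr)^\rho$ implicitly, but the bookkeeping is essentially the same.
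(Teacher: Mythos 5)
Your proof is correct, and it takes a genuinely different route from the paper's. The paper first normalizes by conjugating with $\psi(x)=x^r$, so that $h:=D\circ\psi^{-1}$ becomes a $C^1$ near-linear map to which the Inverse Function Theorem applies directly; it then transports $h^{-1}$ back through $\psi$ using the Generalized Binomial Theorem (as an infinite series), and establishes the flatness of the remainder via three successive intermediate claims (on $\mathcal{S}:=\mathcal{R}\circ\psi^{-1}$, on $\mathcal{H}:=h^{-1}-\alpha^{-1}\mathrm{id}$, and finally on the tail $\mathcal{W}$ of the binomial expansion). You instead avoid the change of variables entirely: you read off $x=\kappa\bigl(y-\mathcal{R}(x)\bigr)^\rho$ from the defining equation, extract $\mathcal{V}$ as a single factored expression, and pass to the $y$-scale by the two-sided bound $x\asymp y^\rho$; for $\mathcal{V}'$ you use $(D^{-1})'=1/D'$ and a deliberate cancellation of the two $\tfrac{1}{\alpha r}x^{1-r}$ leading terms. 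Your version is shorter and uses only a finite MVT-type bound in place of the infinite binomial series; the paper's version is more modular (the change of variables $\psi$ localizes all the power-function manipulations into one place) and makes the uniqueness/$C^1$-regularity of the inverse a consequence of a single IFT application rather than of a monotonicity argument. Both establish the same flatness exponent $\eta=\ell/r$. One small point worth making explicit if this were to be written up: when $1-r+\ell<0$ the bound $|\mathcal{V}'(y)|\leqslant C x^{1-r+\ell}$ must be converted using the \emph{lower} bound $x\gtrsim y^\rho$ (a negative power reverses the inequality), but your two-sided comparison $x\asymp y^\rho$ already supplies this, so the argument is sound.
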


\begin{proof}
	Consider the (continuous) change of variables $\psi\colon[0,\delta)\to[0,\delta^r)$ given by
	\[
	\psi(x)=x^r=:z,
	\]
	and observe that its inverse $\psi^{-1}\colon[0,\delta^r)\to[0,\delta)$ is given by
	\begin{equation}\label{39}
		\psi^{-1}(z)=z^\frac{1}{r}=x.
	\end{equation}
	Let $h\colon[0,\delta^r)\to\mathbb{R}$ be given by $h:=D\circ\psi^{-1}$ and observe that we have the following diagram.
	\[
	\begin{tikzcd}
		{[0,\delta)} \arrow{r}{D} \arrow{d}[swap]{\psi} & \mathbb{R} \\
		{[0,\delta^r)} \arrow{ru}[swap]{h:=D\circ\psi^{-1}} 
	\end{tikzcd}
	\]
	From~\eqref{38} and~\eqref{39} we have
	\begin{equation}\label{40}
		h(z)=\alpha z+\mathcal{S}(z),
	\end{equation}
	with $\mathcal{S}(z)=\mathcal{R}(\psi^{-1}(z))=\mathcal{R}(z^\frac{1}{r})$.
	
	We claim that $\mathcal{S}\in\mathcal{F}^1_{1+\ell'}$, with $\ell':=\ell/r>0$. Indeed, since $\mathcal{R}\in\mathcal{F}^1_{r+\ell}$, it follows that given $z>0$ small enough we have,
	\[
		|\mathcal{S}(z)|=|\mathcal{R}(z^\frac{1}{r})|\leqslant 	C|z^\frac{1}{r}|^{r+\ell}=C|z|^{1+\ell'}.
	\]
	On the other hand, we also have
	\[
		|\mathcal{S}'(z)|=\bigl|\partial_z\bigl(\mathcal{R}(z^\frac{1}{r})\bigr)\bigr|=\left|\mathcal{R}'(z^\frac{1}{r})\frac{1}{r}z^{\frac{1}{r}-1}\right|\leqslant\frac{1}{r}|z|^{\frac{1}{r}-1}C|z^\frac{1}{r}|^{r+\ell-1}=\frac{1}{r}C|z|^{\ell'}, 
	\]
	proving the claim.
	
	As a consequence from this claim, we have from~\eqref{40} that $h$ can be $C^1$-extended (by ignoring the remaining $\mathcal{S}$) to a neighborhood of the origin such that $h'(0)=\alpha\neq0$. Hence, we have from the Inverse Function Theorem that $h$ has a well-defined inverse $h^{-1}\colon\mathbb{R}\to[0,\delta^r)$ of the form
	\[
		h^{-1}(y)=\alpha^{-1}y+\mathcal{H}(y),
	\]
	for some $\mathcal{H}(y)\in o(y)$.
	
	We claim that $\mathcal{H}\in\mathcal{F}^1_{1+\ell'}$. Indeed, first observe that
	\[
	\begin{array}{rcl}
		h(h^{-1}(y))=y &\Rightarrow h(\alpha^{-1}y+\mathcal{H}(y))=y & \Rightarrow \alpha(\alpha^{-1}y+\mathcal{H}(y))+\mathcal{S}(h(y))=y \vspace{0.2cm} \\
		
		&\Rightarrow \alpha \mathcal{H}(y)+\mathcal{S}(h(y))=0 & \Rightarrow \mathcal{H}(y)=-\alpha^{-1} \mathcal{S}(h^{-1}(y)).
	\end{array}
	\]
	Observe also that $\mathcal{H}(y)\in o(y)$ implies $r(y):=\mathcal{H}(y)/y\to0$ as $y\to 0$. In particular, it follows that for each constant $C>0$, there is $y_C>0$ such that if $|y|\leqslant y_C$, then $|r(y)|\leqslant C$. Hence, for $y>0$ small enough we have
	\[
	\begin{array}{rl}
		|\mathcal{H}(y)| &= |\alpha^{-1}|\,\bigl|\mathcal{S}(h^{-1}(y))\bigr| \leqslant  |\alpha^{-1}|\,C|h^{-1}(y)|^{1+\ell'} \vspace{0.2cm} \\
		
		&= |\alpha^{-1}|C\,\bigl|\alpha^{-1}y+\mathcal{H}(y)\bigr|^{1+\ell'} \vspace{0.2cm} \\
		
		&= |\alpha^{-1}|C \bigl(|\alpha^{-1}|+|r(y)|\bigr)^{1+\ell'}|y|^{1+\ell'} \vspace{0.2cm} \\
		
		&\leqslant |\alpha^{-1}|C \bigl(|\alpha^{-1}|+C\bigr)^{1+\ell'}|y|^{1+\ell'}.
	\end{array}
	\]
	On the other hand, we have
	\[
	\begin{array}{rl}
		|\mathcal{H}'(y)| &= |\alpha^{-1}|\,\bigl|\mathcal{S}'(h^{-1}(y))\bigr|\,\big|(h^{-1})'(y)\bigr| \vspace{0.2cm} \\
		
		&\leqslant |\alpha^{-1}|C|h^{-1}(y)|^{1+\ell'-1}|\alpha^{-1}+o(1)| \vspace{0.2cm} \\
		
		&\leqslant |\alpha^{-1}|C\bigl(|\alpha^{-1}|+C\bigr)^{\ell'}\bigl|\alpha^{-1}+C\bigr|\,|y|^{\ell'},
	\end{array}
	\]
	proving the claim.
	
	Since $h$ and $\psi$ have inverses, it follows that $D$ also has an inverse and that it is given by
	\begin{equation}\label{41}
		\begin{array}{rl}
			D^{-1}(y) &= \psi\circ h^{-1}(y)=\psi\bigl(\alpha^{-1}y+\mathcal{H}(y)\bigr)=(\alpha^{-1}y+\mathcal{H}(y)\bigr)^\frac{1}{r} \vspace{0.2cm} \\
			
			&\displaystyle\stackrel{(*)}{=} \sum_{k=0}^{\infty}\binom{1/r}{k}(\alpha^{-1}y)^{\frac{1}{r}-k}\mathcal{H}(y)^k = (\alpha^{-1}y)^\frac{1}{r}\sum_{k=0}^{\infty}\binom{1/r}{k}\left(\frac{\mathcal{H}(y)}{\alpha^{-1}y}\right)^k \vspace{0.2cm} \\
			
			&\displaystyle=  (\alpha^{-1}y)^\frac{1}{r}\left(1+\sum_{k=1}^{\infty}\binom{1/r}{k}\alpha^k\left(\frac{\mathcal{H}(y)}{y}\right)^k\right) \vspace{0.2cm} \\
			
			&\displaystyle=  (\alpha^{-1}y)^\frac{1}{r}+ (\alpha^{-1}y)^\frac{1}{r}\sum_{k=1}^{\infty}\binom{1/r}{k}\alpha^k\left(\frac{\mathcal{H}(y)}{y}\right)^k
		\end{array}
	\end{equation}
	with the equality~$(*)$ following from the Generalized Binomial Theorem. Let
	\[
	\mathcal{W}(y):=\sum_{k=1}^{\infty}\binom{1/r}{k}\alpha^k\left(\frac{\mathcal{H}(y)}{y}\right)^k, \quad \mathcal{V}(y):=(\alpha^{-1}y)^\frac{1}{r} \mathcal{W}(y),
	\]
	and observe from~\eqref{41} that $D^{-1}(y)=\kappa|y|^\rho+ \mathcal{V}(y)$.
	
	We claim that $\mathcal{W}\in\mathcal{F}^1_{\ell'}$. Indeed, on one hand observe that for $y>0$ small enough we have,
	\[
	\begin{array}{rl}
		|\mathcal{W}(y)| &\displaystyle\leqslant \sum_{k=1}^{\infty}\left|\binom{1/r}{k}\right||\alpha|^k\left(\frac{|\mathcal{H}(y)|}{|y|}\right)^k \vspace{0.2cm} \\
		
		&\displaystyle\leqslant \sum_{k=1}^{\infty}\left|\binom{1/r}{k}\right||\alpha|^k\left(\frac{C|y|^{1+\ell'}}{|y|}\right)^k \vspace{0.2cm} \\
		
		&\displaystyle\leqslant \sum_{k=1}^{\infty}\left|\binom{1/r}{k}\right||\alpha|^kC^k|y|^{k\ell'} \vspace{0.2cm} \\
		
		&\displaystyle\leqslant \left(\sum_{k=1}^{\infty}\left|\binom{1/r}{k}\right||\alpha|^kC^k|y|^{(k-1)\ell'}\right)|y|^{\ell'}\leqslant C_1|y|^{\ell'}.
	\end{array}
	\]
	On the other hand we have
	\[
	\begin{array}{rl}
		|\mathcal{W}'(y)| &\displaystyle\leqslant \sum_{k=1}^{\infty}\left|\binom{1/r}{k}\right||\alpha|^k k\left(\frac{|\mathcal{H}(y)|}{|y|}\right)^{k-1}\frac{|\mathcal{H}'(y)|\,|y|+|\mathcal{H}(y)|}{|y|^2} \vspace{0.2cm} \\
		
		&\displaystyle\leqslant \sum_{k=1}^{\infty}\left|\binom{1/r}{k}\right||\alpha|^kk\,C^{k-1}|y|^{(k-1)\ell'}\frac{C|y|^{\ell'}\,|y|+C|y|^{1+\ell'}}{|y|^2} \vspace{0.2cm} \\
		
		&\displaystyle\leqslant \sum_{k=1}^{\infty}\left|\binom{1/r}{k}\right||\alpha|^kk\,C^{k-1}|y|^{(k-1)\ell'}C|y|^{\ell'-1} \vspace{0.2cm} \\
		
		&\displaystyle\leqslant \left(\sum_{k=1}^{\infty}\left|\binom{1/r}{k}\right||\alpha|^kk\,C^k|y|^{(k-1)\ell'}\right)|y|^{\ell'-1} \leqslant C_2|y|^{\ell'-1},
	\end{array}
	\]
	proving the claim. It is now easy to see that $\mathcal{V}\in\mathcal{F}^1_{\rho+\eta}$ with $\eta:=\ell'>0$.
\end{proof}

\begin{remark}\label{R8}
	At the proof of Theorem~\ref{M4}, besides inverting the time and thus considering the inverse of $\varphi^\pm$, it was also necessary to make a reflection on the $x$-axis (see Figure~\ref{Fig5}). Therefore, we conclude from Lemma~\ref{L1} and this reflection that the half-return maps of $\mathcal{Z}_x$ are given by
	\[
		\zeta^\pm(x)=\kappa_1^\pm x^\rho+\mathcal{S}^\pm(x),
	\]
	where $\kappa_1^\pm=-|\alpha_1^\pm|^{-\rho^\pm}$, $\rho^\pm=1/r^\pm$, $\mathcal{S}\in\mathcal{F}^1_{\rho^\pm+\eta}$ and $\eta>0$.
\end{remark}

\section{The general formulae of Theorem~\ref{M4}}\label{AppB}

Here we deduced the general formulas that appear at the statement of Theorem~\ref{M4} To this end, we recall $r_m=\min\{r^+,r^-\}$, 
\[
	R=\left\{\begin{array}{ll}
		1/r_m, &\text{if } r_m\geqslant1, \vspace{0.2cm} \\
		1, & \text{if } r_m\leqslant1,
	\end{array}\right.
\]
and that condition
\[
	(r^+-1)(r^--1)\geqslant0
\]
implies either $r^\pm\geqslant1$, or $r^\pm\leqslant1$. The proof will be in a case-by-case basis. First, we observe that if $r_m\geqslant1$ and $\mu=1$ then there is nothing to do, as it is the case provided in the proof.

{\bf Case \boldmath{$r_m\geqslant1$ and $\mu=-1$}.} This case follows by applying the proof of Theorem~\ref{M4} at the reflect system $Z^y$ obtained by applying $(x,y)\mapsto (x,-y)$. Since $\mu_y=-\mu=1$, we have from~\eqref{32} that 
\begin{equation}\label{46}
	r^-=(r^y)^+\leqslant (r^y)^-=r^+.
\end{equation}
In particular, if we let $(r^y)_m=\min\{(r^y)^+,(r^y)^-\}$, then $(r^y)_m=r^-$. Hence, when applying the proof at $Z^y$ we obtain that the position of the limit cycle is given by
\begin{equation}\label{33}
	\overline{x}(\overline{b})=x_0^y\overline{b}^\frac{1}{r^-}+o\left(\overline{b}^\frac{1}{r^-}\right),
\end{equation}
where 
\begin{equation}\label{33x}
	x_0^y=\left\{\begin{array}{ll}
				\displaystyle \frac{1}{|V_1^y|^\frac{1}{r^-}}, &\text{if } r^->1, \vspace{0.2cm} \\
				\displaystyle \frac{1-\alpha_1^-}{|V_1^y|^\frac{1}{r^-}}, &\text{if } r^-=1,
		\end{array}\right.
\end{equation}
and $V_1^y$ is the respective value~\eqref{17} associated to $Z^y$. From~\eqref{46} we notice that we can further divide this case into the following ones.
\begin{enumerate}[label=(\alph*)]
	\item $1<r^-<r^+$;
	\item $1<r^-=r^+$;
	\item $1=r^-<r^+$;
	\item $1=r^-=r^+$.
\end{enumerate}
In each case the proof follows by studying the leading coefficient of $x(b):=\overline{x}(-b)+b$. If $(a)$ holds, then $1/r_m<1$ and thus $b\in o\bigl(\overline{b}^\frac{1}{r^-}\bigr)=o\bigl((\mu b)^\frac{1}{r^-}\bigr)$. Notice that
\[
	|V_1^y|=\sigma_y\delta_y(\alpha_1^y)^+=\sigma(-\delta)\alpha_1^-=\sigma(-\delta\alpha_1^-)=\sigma V_1=|V_1|.
\]
Therefore we conclude
\[
	 x(b)=\frac{1}{|V_1|^\frac{1}{r^-}}(\mu b)+o\bigl((\mu b)^\frac{1}{r^-}\bigr)=\frac{1}{|V_1|^\frac{1}{r_m}}(\mu b)+o\bigl((\mu b)^\frac{1}{r_m}\bigr),
\]
as in Theorem~\ref{M4}. Case $(b)$ follows similarly. We now focus on $(c)$. In this case $1/r_m=1$ and thus the term $+b$ in $x(b)=\overline{x}(-b)+b$ cannot be included in the remainder. Nevertheless, from~\eqref{33} and~\eqref{33x} follows that the leading coefficient of $x(b)$ is given by
\[
	\frac{1-\alpha_1^-}{|V_1^y|}(-1)+1=\frac{\alpha_1^--1+\sigma_y\delta_y\alpha_1^-}{\sigma_y\delta_y\alpha_1^-}=\frac{\alpha_1^--1-\sigma\delta\alpha_1^-}{-\sigma\delta\alpha_1^-}=\frac{1}{|V_1|}\mu,
\]
where in the last equality we have used $\sigma\delta=-\mu=1$. Case $(d)$ follows similarly. In particular, we now have the first two formulas of~\eqref{30}.

{\bf Case \boldmath{$r_m\leqslant1$}.} In this case we must apply the proof of Theorem~\ref{M4}, and also the reasoning of the previous case, at system $\mathcal{Z}^x$ obtained after the change of variables $(x,y,t)\mapsto (-x,y,-t)$. As a consequence, we obtain that the position of limit cycle is given by
\begin{equation}\label{47}
	\chi(\overline{b})=\chi_0(\mu_x\overline{b})^\frac{1}{\rho_m}+o\left((\mu_x\overline{b})^\frac{1}{\rho_m}\right),
\end{equation}
where
\begin{equation}\label{48}
	\chi_0=\left\{\begin{array}{ll}
		\displaystyle \frac{1}{|\mathcal{V}_1|^\frac{1}{\rho_m}}, &\text{if } \rho^+>1, \vspace{0.2cm} \\
		\displaystyle \frac{1-\kappa_1^+}{|\mathcal{V}_1|^\frac{1}{\rho_m}}, &\text{if } \rho^+=1,
	\end{array}\right.
\end{equation}
$\kappa_1^\pm=-|\alpha_1^\pm|^{-\rho^\pm}$ (recall Lemma~\ref{L1} and Remark~\ref{R8}), $\rho^\pm=1/r^\pm$, $\rho_m=\min\{\rho^+,\rho^-\}$,
\begin{equation}\label{49}
	\mathcal{V}_1=\left\{\begin{array}{ll}
		\displaystyle \delta_x\kappa_1^+, 	&\text{if } \rho^+<\rho^-, \vspace{0.2cm} \\
		\displaystyle \delta_x(\kappa_1^+-\kappa_1^-), &\text{if } \rho^+=\rho^-, \vspace{0.2cm} \\
		\displaystyle -\delta_x\kappa_1^-, 	&\text{if } \rho^+>\rho^-
	\end{array}\right.
\end{equation}
$\delta_x=\delta$, $\sigma_x=-\sigma$, $\mu_x=-\mu$, $\overline{b}=-b$ and $\chi=-x$.

We now need to bring~\eqref{47} to the original system of coordinates. To this end, we must reverse the reflections made at the construction of $\mathcal{Z}^x$. That is, first we send~\eqref{47} to the other side of the $\zeta$-axis, to compensate the reversion of the time variable. This will de done by applying $\zeta^\pm=(\varphi^\pm)^{-1}$ at it (see Figure~\ref{Fig6}$(a)$).
\begin{figure}[ht]
	\begin{center}
		\begin{minipage}{6cm}
			\begin{center} 
				\begin{overpic}[width=4cm]{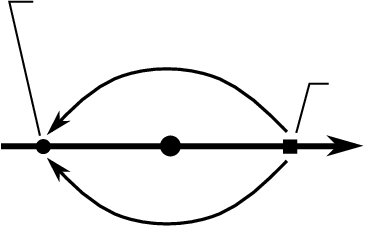} 
					\put(18,38){$\zeta^+$}
					\put(18,-1){$\zeta^-$}
					\put(91,38.5){$\chi(\overline{b})$}
					\put(11,59){$\zeta^+\bigl(\chi(\overline{b})\bigr)=\zeta^-\bigl(\chi(\overline{b})\bigr)$}
					\put(45,12){$0$}
					\put(95,15){$\chi$}
				\end{overpic}
				
				$(a)$
			\end{center}
		\end{minipage}
		\begin{minipage}{6cm}
			\begin{center} 
				\begin{overpic}[width=4cm]{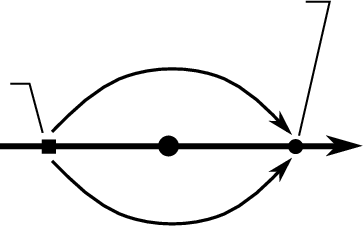} 
					\put(18,38){$\zeta^+$}
					\put(18,-1){$\zeta^-$}
					\put(-14,37){$\chi(\overline{b})$}
					\put(-10,59){$-\zeta^+\bigl(\chi(\overline{b})\bigr)=-\zeta^-\bigl(\chi(\overline{b})\bigr)$}
					\put(45,12){$0$}
					\put(85,14){$x=-\chi$}
				\end{overpic}
				
				$(b)$
			\end{center}
		\end{minipage}
	\end{center}
	\caption{Illustration of $(a)$ the image of $\chi(\overline{b})$ by $\zeta^\pm$ and $(b)$ reflecting the axis.}\label{Fig6}
\end{figure}
Then to compensate the reflection made at the $x$-axis, we reflect it again (see Figure~\ref{Fig6}$(b)$). Therefore, the expression of $\chi(\overline{b})$ in our original coordinates is given by
\[
	\begin{array}{rl}
		\displaystyle x(b) &\displaystyle:=-\zeta^\pm\bigl(\chi(\overline{b})\bigr)=\zeta^\pm\left(\chi_0(\mu_x\overline{b})^\frac{1}{\rho_m}+o\left((\mu_x\overline{b})^\frac{1}{\rho_m}\right)\right) \vspace{0.2cm} \\
		\qquad &\displaystyle=-\kappa_1^\pm\left(\chi_0(\mu_x\overline{b})^\frac{1}{\rho_m}+o\left((\mu_x\overline{b})^\frac{1}{\rho_m}\right)\right)^{\rho^\pm}=-\kappa_1^\pm\chi_0^{\rho^\pm}(\mu_x\overline{b})^\frac{\rho^\pm}{\rho_m}+o\left((\mu_x\overline{b})^\frac{\rho^\pm}{\rho_m}\right).
	\end{array}
\]
If we choose $*\in\{+,-\}$ such that $\rho^*=\rho_m$, then
\[
	x(b)=-\kappa_1^*\chi_0^{\rho_m}(\mu b)+o(b).
\]
In particular, the exponent is always $1$ while the leading coefficient is given by $x_0=-\kappa_1^*\chi_0^{\rho_m}$. The proof now follows by dividing the calculation in following cases.
\begin{enumerate}[label=(\alph*)]
	\item $1<\rho^+<\rho^-$.
	\item $1\leqslant\rho^-<\rho^+$;
	\item $1=\rho^+<\rho^-$;
	\item $1<\rho^+=\rho^-$.
\end{enumerate}
Observe that $\rho^+=\rho^-=1$ implies $r^+=r^-=1$, which was already studied. The last three expressions of~\eqref{30} now follows from tedious but simple calculations. Here we only provide the calculation for case $(d)$. Let $\rho=\rho^+=\rho^-$. It follows from~\eqref{48} and~\eqref{49} that
\[
	\begin{array}{rl}
		\displaystyle -\kappa_1^-\chi_0^\rho &\displaystyle=  -\kappa_1^-\left(\frac{1}{|\mathcal{V}_1|^\frac{1}{\rho}}\right)^\rho=\frac{-\kappa_1^-}{\sigma_x\delta_x(\kappa_1^+-\kappa_1^-)}=\frac{-|\alpha_1^-|^{-\rho}}{\sigma_x\delta_x\bigl(|\alpha_1^+|^{-\rho}-|\alpha_1^-|^{-\rho}\bigr)} \vspace{0.2cm} \\
		&\displaystyle=\frac{\mu_x}{|\alpha_1^-|^\rho}\frac{|\alpha_1^-|^\rho|\alpha_1^+|^\rho}{|\alpha_1^-|^\rho-|\alpha_1^+|^\rho}=\frac{|\alpha_1^+|^\rho}{\bigl||\alpha_1^+|^\rho-|\alpha_1^-|^\rho\bigr|},
	\end{array}
\]
where in the fourth equality we have used $-\sigma_x\delta_x=\mu_x=1$.

\section*{Acknowledgments}

L.Q. Arakaki is supported by S\~{a}o Paulo Research Foundation (FAPESP) grant 2024/06926-7.  DDN is supported by São Paulo Research Foundation (FAPESP) grant 2024/15612-6, and by Conselho Nacional de Desenvolvimento Científico e Tecnológico (CNPq) grant 301878/2025-0. P. Santana is supported by the S\~{a}o Paulo Research Foundation (FAPESP), grants 2021/01799-9, and 2024/15612-6.

\bibliographystyle{siam}
\bibliography{Ref.bib}

\end{document}